\newtheorem{theorem}{Theorem}[section]
\newtheorem{lemma}[theorem]{Lemma}
\newtheorem{corollary}[theorem]{Corollary}
\newtheorem{question}[theorem]{Question}
\newtheorem{definition}[theorem]{Definition}
\newtheorem{rmk}[theorem]{Remark}
\newtheorem{example}[theorem]{Example}
\newtheorem{prop}[theorem]{Proposition}
\pgfplotsset{compat = newest}
\newcommand{\ZZ}{\mathbb{Z}}
\newcommand{\QQ}{\mathbb{Q}}
\newcommand{\CC}{\mathbb{C}}
\newcommand{\RR}{\mathbb{R}}
\newcommand{\PP}{\mathbb{P}}
\title{Algebraic capacities as tropical polynomials over the reduced $c_1$-positive symplectic cone}
\author{Tian-Jun Li, Shengzhen Ning }
\date{\today}
\begin{document}

\maketitle

\begin{abstract}
  In a series of work \cite{Algebraic}, \cite{BenEhrhart} and \cite{rationalobs}, algebraic capacities were introduced in an algebraic manner for polarized algebraic surfaces and applied to the symplectic embedding problems. In this paper, we give a reformulation of algebraic capacities in terms of only a tamed pair of symplectic form and almost complex structure. We show that they actually only depend on the cohomology class of the symplectic form for a rational manifold. Since it is not known that any symplectic form on a rational manifold is K\"{a}hler, this novel formulation potentially is more general on a rational manifold. Additionally, for manifolds with $b_2^+=1$, we derive asymptotic results that are parallel to the context of ECH (Embedded Contact Homology) and algebraic settings. When assuming $c_1\cdot [\omega]>0$ on rational manifolds, we further introduce a sequence of tropical polynomials which will succinctly describe those capacities viewed as functions over the domain parametrizing such symplectic forms. As an application, we give a purely symplectic proof of the correspondence between algebraic capacities and ECH capacities for smooth toric surfaces.
\end{abstract}

\tableofcontents

\section{Introduction}

 For a normal projective algebraic surface $Y$ with a big $\QQ$-Cartier $\RR$-divisor $A$, \cite{Algebraic} introduces the $k$-th algebraic capacity in its full generality as
$$c_k^{\rm{alg}}(Y,A):=\inf_{D\in \text{Nef}^{\text{qc}}(Y)}\{D\cdot A\,|\,\chi(D)\geq k+\chi(\mathcal{O}_Y)\},$$
  where $\text{Nef}^{\text{qc}}(Y)$ denotes the set of nef $\QQ$-Cartier $\ZZ$-divisors on $Y$. When $Y$ is smooth and $A$ is ample, the above definition can be read as
$$c_k^{\rm{alg}}(Y,A):=\inf_{D\in \text{Nef}(Y)_\ZZ}\{D\cdot A\,|\,D^2-K_Y\cdot D\geq 2k\},$$
where $K_Y$ is the canonical class of $Y$. In this paper, we delve into the so-called {\bf tamed triple} $(X,\omega,J)$, where $X$ is a closed $4$-manifold possessing a symplectic form $\omega$ and a tamed almost complex structure $J$. We introduce the following quantitative invariant $f_k(X,\omega,J)$ for each positive integer $k$, which we call the {\bf $k$-th tamed capacity}, to be  $$\inf \{\omega(A)\,|\,A\in H_2(X;\ZZ)\cap \text{PD}(H^{+}_J(X)),A\text{ is } J\text{-nef}, ind(A)\geq 2k\}\footnote{We adopt the convention that the infimum over the empty set is $\infty$.}.$$ 
Here $H^{+}_J(X)=H^{1,1}_J(X)\cap H^2(X;\RR)$, introduced in \cite{comparetamecompatible}, is the space of $J$-invariant cohomology classes housing almost K\"{a}hler classes, where $H^{1,1}_J(X)\subseteq H^2(X;\CC)$ is the almost complex analogue of the Dolbeault cohomology group. The term  ``$J$-nefness" is the almost complex analogue of nefness in algebraic geometry, while $ind(A)$ denotes the real Gromov-Taubes dimension $A^2-K\cdot A$ where $K=K_{\omega}=K_J$ denotes the canonical class associated to $\omega$ or $J$. Further elucidation of these definitions will be provided in subsequent sections. When $(\omega,J)$ is a K\"{a}hler pair coming from a projective embedding of $X$, this definition naturally aligns with the $k$-th algebraic capacity $c_k^{\rm{alg}}$ for a smooth polarized surface in \cite{Algebraic}. Note that $c_k^{\rm{alg}}$ can be defined for possibly non-smooth algebraic surfaces, and $f_k$ can be defined for possibly non-integrable $J$ which is only assumed to be tamed by $\omega$. Since there are also plenty of examples of non-K\"{a}hler (thus non-algebraic) symplectic manifolds (\cite{KodairaThurston},\cite{Draghici},\cite{nonkahlerruled},\cite{nonkahlerT4}), our $f_k$'s are more general in this sense.

Utilizing the machinery of embedded contact homology (ECH), \cite{rationalobs} gives compelling applications concerning obstructions to symplectic embeddings into polarized algebraic surfaces, particularly toric surfaces, highlighting the significance of the quantities $c_k^{\rm{alg}}$'s. The application of algebraic geometry to study symplectic embedding problems traces its roots back to the seminal work of \cite{MPpacking}. However, the utilization of algebraic geometry diverges substantially between \cite{MPpacking} and \cite{rationalobs}. In \cite{MPpacking}, Nakai-Moishezon criterion and the understanding of curves on certain surfaces are applied to produce K\"{a}hler forms, leading to the existence of embeddings through the equivalence established between ball packings and the existence of symplectic forms on the blowup manifold. Conversely, in \cite{rationalobs}, those $c_k^{\rm{alg}}$'s are defined through an algebraic process and applied to give obstructions to embeddings. Our motivation of introducing the tamed capacities $f_k$'s is to explore the extent of what can be achieved by circumventing certain technical and deep results in algebraic geometry. 

The definition of our tamed capacities heavily depends on the almost complex structure $J$ since the space $H^{+}_J(X)$ is an almost complex invariant which already relies on $J$. But a significant result highlighted in \cite[corollary 3.4]{DTZ10} establishes that when the manifold has $b_2^+=1$, $H^{+}_J(X)$ encompasses the entire $H^2(X;\RR)$. Let's assume the $b_2^+=1$ condition for the moment. Since any torsion class in $H_2(X;\ZZ)$ can not have index $\geq 2k\geq 2$, we can just use $H_2(X;\ZZ)$ instead of $H_2(X;\ZZ)\cap \text{PD}(H^{+}_J(X))$ in the definition of $f_k$. Note that we are still doing optimization on a $J$-dependent set $$U_1:=\{A\text{ is } J\text{-nef}, ind(A)\geq 2k\}\subseteq H_2(X;\ZZ).$$ However, when considering rational manifolds\footnote{We use the terminology rational manifold rather than rational surface throughout the paper. This means the manifold does not come with a complex structure. Hopefully this can avoid unnecessary confusion from readers of more algebraic inclinations. } $\CC\PP^2\#n\overline{\CC\PP}^2$, with the assistance of Seiberg-Witten and Gromov-Taubes theories we can introduce other six subsets $U_2,\cdots,U_7$ (see section \ref{section:hierarchy} for definitions) of $\{ind\geq 2k\}\subseteq H_2(X;\ZZ)$ which all bear some resemblance to $U_1$. These seven sets, although defined differently, share an intriguing and harmonious relationship. 

\begin{prop}[=Proposition \ref{prop:hierarchy}]
    $$U_1\subsetneq U_2\subsetneq U_3\subsetneq U_4= U_5=U_6=U_7.$$ 
\end{prop}

This result is an improvement of \cite[proposition 2.5]{rationalobs}. Within these seven sets, $U_5$ holds significant importance from the perspective of computations since it's simply given by $$U_5:=\{A\cdot H>0, ind(A)\geq 2k\}\subseteq H_2(X;\ZZ).$$
One can easily see that $U_5$ is purely homological and thus $J$-independent. Our tamed capacities then become computable due to the following result.

\begin{theorem}[=Proposition \ref{prop:equivalentdef}+Corollary \ref{cor:indepofj}]\label{thm:introequivdef}
    If $X$ is a rational manifold $\CC\PP^2\#n\overline{\CC\PP}^2$, $f_k$ can be defined in seven equivalent ways by doing optimizations on $U_1,\cdots,U_7$. The value of $f_k$ only depends on the cohomology class $[\omega]$.
\end{theorem}
 Note that the work \cite{mcduffsiegel,HutchingsElementary} use a max-min procedure to give an elementary alternative definitions for symplectic capacities so as to avoid heavy Floer-theoretic machinery in symplectic geometry. Therefore if we let $\mathcal{J}(X,\omega)$ denote the space of all tamed almost complex structures and study the invariant $\sup_{J\in \mathcal{J}(X,\omega)} f_k(X,\omega,J)$ in alignment with the philosophy demonstrated in \cite{mcduffsiegel,HutchingsElementary}, the above theorem tells that taking the supremum over all tamed almost complex structures is redundant for rational manifolds.

 In general, the preceding $J$-independence result is too strong to hold true for all manifolds (see example \ref{rmk:exampleT4}). Consequently, our focus will be narrowed to $b_2^+=1$ manifolds and firstly investigate their asymptotic behaviors. The asymptotic behaviors of ECH capacities were first conjectured in \cite{Hutquantitative} and later established in \cite{ECHasymp}. Its algebraic analogue was then pursued in \cite{Algebraic}. To be more precise, in the algebraic settings it's shown that $c_k^{\rm{alg}}(Y,A)$ behave like $\sqrt{2A^2k}$ as $k\rightarrow\infty$ when $Y$ is either smooth or toric and $A$ is big and nef. We demonstrate that the argument in \cite{Algebraic} can be adapted to our settings, allowing us to establish the asymptotic behavior under the assumption $b_2^+=1$. 
 \begin{prop}[=Proposition \ref{prop:asymp}]\label{prop:asympintro}
     Let $(X,\omega,J)$ be a tamed triple with $b_2^+(X)=1$, then
    \[\lim_{k\rightarrow\infty}\frac{f_k(X,\omega,J)^2}{k}=2[\omega]^2.\]
 \end{prop}

 Again we have to emphasize that the preceding result, although proven in a manner resembling the algebraic settings, does not directly follow as an immediate consequence due to the examples of non-K\"{a}hler symplectic structures. The assumption $b_2^+(X)=1$ here is vital, which prompts us to speculate an asymptotic version of $J$-independence for $f_k(X,\omega,J)$, aiming to extend theorem \ref{thm:introequivdef} from rational manifolds to all $b_2^+(X)=1$ cases. See question \ref{question:asym} for the precise formulation.
 
 Now we return to the scenario where $X$ is a rational manifold. Then $f_k$ can naturally be viewed as a function over the region in $H^2(X;\RR)$ that accommodates symplectic classes. Since it's a long-standing unsolved question whether any symplectic form on a rational manifold is K\"{a}hler, this region potentially could be larger than any ample cone of $X$ when $X$ is equipped with some complex structure and regarded as an algebraic surface. The next main result of this paper is that after further assuming the {\bf $c_1$-positive} condition $c_1\cdot[\omega]>0$, the pattern of $f_k$ can be succinctly described by {\bf tropical polynomials}. It's worth noting that this condition includes all symplectic toric surfaces, given that the preimage of moment map polygon boundary forms a symplectic divisor representing the class $c_1$. It also includes all $\CC\PP^2\#n\overline{\CC\PP}^2$ with $n\leq 9$ by light cone lemma (\cite[lemma 3.7]{McDufflecture}). 

\begin{rmk}
     Unlike the works such as \cite{casalvianna} which essentially employ tropical techniques to construct curves, our formulation involving tropical polynomials is purely for the sake of convenience. Readers who find this unfamiliar or discomforting can seamlessly substitute `tropical' with `finitely piecewise linear'.
 \end{rmk}
 
Recall that the tropical addition and multiplication are given by 
$$x\oplus y:=\min\{x,y\},\quad\quad\quad x\odot y:=x+y. $$
Let $B\subset\ZZ^n$ be a finite set and $c_{\vec{a}}$ a real number for each $\vec{a}\in B$. Under this redefined arithmetic operation, a tropical polynomial is
\[f(x_1,\cdots,x_n):=\sum_{\vec{a}\in B}c_{\vec{a}}\vec{x}^{\vec{a}}=\text{min}\{c_{\vec{a}}+\langle{\vec{x}},\vec{a}\rangle\,|\,\vec{a}\in B\}.\]

The first interesting example is $\CC\PP^2\#\overline{\CC\PP}^2$. In this case the domain of $f_k$ has two parameters $\omega(H)$ and $\omega(E)$ where $H$ is the line class and $E$ is the exceptional class. It's convenient to only consider the scaled symplectic forms with $\omega(H)=1$ so that the domain can be parametrized by a single variable $x:=\omega(E)\in (0,1)$. This actually involves taking a {\bf normalized} cross section of the symplectic cone (see section \ref{section:symplecticcone} and the figure therein). Let $f_k(x)=f_{k}(X,[\omega])$ with $\omega(H)=1, \omega(E)=x$. Then with a bit of effort one can write down the first few terms (see figure \ref{fig:graph}):
\[f_1(x)=1\odot x^{-1},\] \[f_2(x)=(2\odot x^{-2}) \oplus 1,\] \[f_3(x)=(3\odot x^{-3})\oplus (2\odot x^{-1}),\] 
\[f_4(x)=(4\odot x^{-4})\oplus(2\odot x^{-1}),\] \[f_5(x)=(5\odot x^{-5})\oplus(3\odot x^{-2})\oplus 2,\]
\[f_6(x)=(6\odot x^{-6})\oplus(3\odot x^{-2}),\]
\[f_7(x)=(7\odot x^{-7})\oplus(4\odot x^{-3})\oplus(3\odot x^{-1}),\] \[f_8(x)=(8\odot x^{-8})\oplus(4\odot x^{-3})\oplus(3\odot x^{-1}), \]\[\cdots\cdots\]

However there is no guarantee that all $f_k$ are indeed tropical polynomials when $k$ is getting large. Namely, the functions could be piecewise linear but have infinitely many non-smooth points. To effectively utilize the tropical framework for describing these functions, it becomes imperative to demonstrate the finiteness of the tropical sum. Consequently, our primary theorem can be interpreted as follows:
\begin{theorem}[=Theorem \ref{thm:main}]\label{thm:intromain}
    For rational manifold $X$, there exists finitely many minimizers $A_1,\cdots,A_m$ such that $f_k(X,[\omega])=\min\{\omega(A_1),\cdots,\omega(A_m)\}$ for all $[\omega]\in\mathcal{P}^{c_1>0}$, where $\mathcal{P}^{c_1>0}$ denotes a polyhedral domain naturally parametrizing all symplectic forms satisfying $c_1\cdot[\omega]>0$ which will be introduced in section \ref{section:symplecticcone}.
\end{theorem}
 We emphasize that the tropical property does not hold in the absence of the $c_1$-positive condition, as illustrated in the example provided in remark \ref{rmk:nonc1nef}. Working in the algebraic category, \cite{Algebraic} obtains the locally finiteness when viewing $c_k^{\rm{alg}}$ as a function over the big cone. The aforementioned theorem can thus be understood as a global finiteness over a large region parametrizing $c_1$-positive symplectic forms. We refer the readers to section \ref{section:comparison} for a more detailed discussion.

 For the rational manifold $X$, if we view $\frac{f_k(X,[\omega])^2}{k}$ as a function over $\tilde{\mathcal{P}}^{c_1>0}$, the cross section of $\mathcal{P}^{c_1>0}$ by normalizing $\omega(H)=1$, then proposition \ref{prop:asympintro} confirm the pointwise convergence on the bounded yet non-compact domain $\tilde{\mathcal{P}}^{c_1>0}$. But now the tropical property allows us to extend the functions $\frac{f_k(X,[\omega])^2}{k}$ continuously onto the closure of $\tilde{\mathcal{P}}^{c_1>0}$. Therefore it could be viewed as a strengthened version of the asymptotic behavior from pointwise convergence to uniform convergence (see figure \ref{fig:graph}).
 
 
\begin{figure}[h]
\centering
\begin{tikzpicture}[scale=0.70]
\begin{axis}[
    xmin = 0, xmax = 1,
    ymin = 0, ymax = 3.1]
    \addplot[
        domain = 0:1,
    ] {1-x};
     \addplot[draw=red,
        domain = 0:1,
    ] {min(2-2*x,1)};
    \addplot[draw=orange,
        domain = 0:1,
    ] {min(3-3*x,2-x)};
      \addplot[draw=yellow,
        domain = 0:1,
    ] {min(4-4*x,2-x)};
    \addplot[draw=green,
        domain = 0:1,
    ] {min(5-5*x,3-2*x,2)};
    \addplot[draw=blue,
        domain = 0:1,
    ] {min(6-6*x,3-2*x)};
    \addplot[draw=cyan,
        domain = 0:1,
    ] {min(7-7*x,4-3*x,3-x)};
    \addplot[draw=violet,
        domain = 0:1,
    ] {min(8-8*x,4-3*x,3-x)};
\end{axis}
\end{tikzpicture}
\begin{tikzpicture}[scale=0.70]
\begin{axis}[
    xmin = 0, xmax = 1,
    ymin = 0, ymax = 3.1]
    \addplot[
        domain = 0:1,
    ] {(1-x)*(1-x)};
     \addplot[draw=red,
        domain = 0:1,
    ] {min((2-2*x)*(2-2*x),1)/2};
    \addplot[draw=orange,
        domain = 0:1,
    ] {min((3-3*x)*(3-3*x),(2-x)*(2-x))/3};
      \addplot[draw=yellow,
        domain = 0:1,
    ] {min((4-4*x)*(4-4*x),(2-x)*(2-x))/4};
    \addplot[draw=green,
        domain = 0:1,
    ] {min((5-5*x)*(5-5*x),(3-2*x)*(3-2*x),2*2)/5};
    \addplot[draw=blue,
        domain = 0:1,
    ] {min((6-6*x)*(6-6*x),(3-2*x)*(3-2*x))/6};
    \addplot[draw=cyan,
        domain = 0:1,
    ] {min((7-7*x)*(7-7*x),(4-3*x)*(4-3*x),(3-x)*(3-x))/7};
    \addplot[draw=violet,
        domain = 0:1,
    ] {min((8-8*x)*(8-8*x),(4-3*x)*(4-3*x),(3-x)*(3-x))/8};
    \addplot[draw=brown,
    line width=3.3pt,
    dashed,
        domain = 0:1,
    ] {2-2*x*x};
\end{axis}
\end{tikzpicture}
\captionof{figure}{The left figures are the first eight capacity functions $f_1,\cdots,f_8$. The right figures are $\frac{f_1^2}{1},\cdots,\frac{f_8^2}{8}$. The functions $\{\frac{f_k^2}{k}\}_{k\in{\ZZ_+}}$ will uniformly converge to the volume function $2[\omega]^2$ shown by the dashed curve.}\label{fig:graph}
\end{figure}
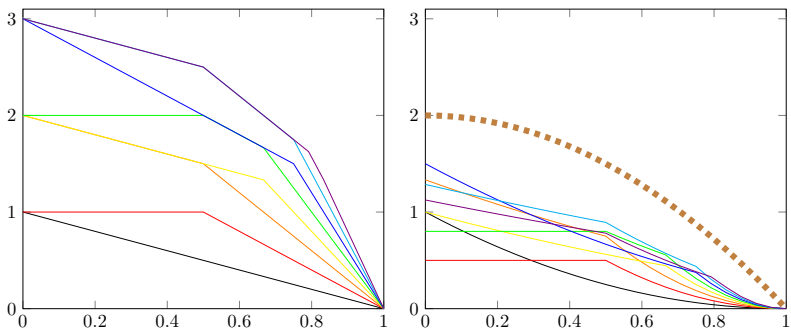

 Besides its own intriguing phenomenon, we find that this new tropical interpretation also has an application in establishing the relation with ECH capacities. It has been shown in \cite{BenEhrhart} that the algebraic capacities of a closed toric surface are closely related to the ECH capacities of its associated convex toric domain (for definition, see section \ref{section:ECH}). The original proof in \cite{BenEhrhart} entailed terminologies from algebraic toric geometry. As an application of our tropical property, we give an alternative proof of the following result in a purely symplectic flavor.

\begin{prop}[=Proposition \ref{prop:ech}]
Assume $(X,\omega)$ has a toric action with moment polygon $\Omega$. Let $X_{\Omega}$ be the convex toric domain in $\CC^2$ associated to $\Omega$. Then $f_k(X,\omega)=c_k^{\rm{ECH}}(X_{\Omega})$.
\end{prop}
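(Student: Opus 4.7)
The plan is to reduce both $f_k(X,\omega)$ and $c_k^{ECH}(X_\Omega)$ to the same combinatorial minimization over lattice sub-polygons of $\Omega$, and exhibit a bijection of minimizers.

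First, I would apply Proposition~\ref{prop:equivalentdef} to replace the infimum over tamed $J$ by a computation at the toric integrable complex structure $J_0$. For $J_0$, every irreducible $J_0$-holomorphic curve decomposes into a non-negative integer combination of the torus-invariant spheres $\{C_e\}$ over the edges $e$ of $\Omega$, so a class $A\in H_2(X;\ZZ)$ is $J_0$-nef if and only if $A\cdot [C_e]\geq 0$ for each edge $e$. Thus the $J_0$-nef integer classes are exactly the integer points of the rational polyhedral cone dual to the one generated by the $[C_e]$.

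Second, I parametrize this cone by data attached directly to $\Omega$: to each such $A$, associate the convex sub-polygon $P_A\subseteq \Omega$ whose edge supporting lines are those of $\Omega$ shifted inward by the quantities $A\cdot [C_e]$. This gives a bijection between $J_0$-nef integer classes and lattice sub-polygons of $\Omega$ sharing the outer edge-normals of $\Omega$. Using only the toric moment-map formalism, one computes
\[\omega(A)=2\,\mathrm{MV}(\Omega,P_A),\qquad ind(A)=A^2-K\cdot A=2\bigl(\#(P_A\cap\ZZ^2)-1\bigr),\]
where $\mathrm{MV}$ denotes mixed volume; the latter identity comes from a direct Pick-type computation for the self-intersection and canonical pairings of toric spheres, bypassing Riemann--Roch or the nef-divisor formalism. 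Combined with the finiteness from Theorem~\ref{thm:main}, this yields
\[f_k(X,\omega)=\min\bigl\{2\,\mathrm{MV}(\Omega,P)\,:\, P \text{ a lattice sub-polygon of } \Omega \text{ with } \#(P\cap \ZZ^2)\geq k+1\bigr\}.\]

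Third, I compare this with Hutchings's combinatorial formula for the ECH capacity of the convex toric domain $X_\Omega$, which expresses $c_k^{ECH}(X_\Omega)$ as the minimum of an $\Omega$-action functional over convex lattice paths enclosing at least $k+1$ lattice points. The boundary-traversal map $P\mapsto \partial P$ matches the domains of the two minimization problems. The main obstacle is precisely this last matching: verifying that the mixed volume functional $2\,\mathrm{MV}(\Omega,\cdot)$ agrees with Hutchings's ECH action on every such path. Since both functionals are linear in the edge-vector parameterization of $P$, it suffices to check agreement on a spanning set of elementary one-edge moves, which can be reduced to the ball and ellipsoid cases where both quantities are independently computable; once this identification is in place, the equality of $f_k$ and $c_k^{ECH}$ is automatic.
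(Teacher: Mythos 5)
Your route is genuinely different from the paper's, and it is essentially the approach the paper is explicitly trying \emph{not} to take. Proposition~\ref{prop:ech} is presented as ``a direct proof \ldots without terminology from algebraic geometry''; the paper never forms the nef cone of the toric surface, never invokes the polytope-divisor dictionary, and never touches $h^0$ or Pick/Riemann--Roch. Instead it reduces $f_k$ to the optimization over $U_5$ (a purely homological set, thanks to Proposition~\ref{prop:equivalentdef}), rewrites it via Theorem~\ref{thm:main} and Lemma~\ref{lem:reduced}, and then matches it term-by-term with the \emph{weight-sequence} formula for $c_k^{ECH}$ from Lemma~\ref{lem:formula}, handling the non-Delzant-compatible boundary case by the corner-chopping plus the blowup-continuity Corollary~\ref{cor:blowup}. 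Your plan, by contrast, passes to the toric integrable $J_0$, identifies $J_0$-nef classes with lattice polytopes whose normal fan is refined by $\Sigma_X$, and converts the constraint $ind(A)\geq 2k$ into a lattice-point count and the area $\omega(A)$ into a mixed volume. That is precisely the Riemann--Roch/Ehrhart mechanism used in \cite{BenEhrhart}, repackaged through Pick's theorem; even the Pick computation silently uses the toric facts $D^2 = 2\,\mathrm{Area}(P_D)$ and $-K\cdot D = \#\partial P_D\cap \ZZ^2$, so it does not avoid the toric-divisor formalism, it just reorganizes it.

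Beyond being a different (already-known) route, there are two concrete gaps. First, you flag yourself the central one: that the ECH action $\ell_\Omega(\Lambda)$ on a convex lattice path equals $2\,\mathrm{MV}(\Omega,P_\Lambda)$. Your proposed fix (``linearity in edge vectors, check on ball/ellipsoid'') is not a proof: the set of nef lattice polytopes is a cone, not a vector space, so ``elementary one-edge moves'' do not literally span, and the agreement you would be verifying on a ball or ellipsoid does not propagate for free. This identity is exactly the content that must be proved, and the paper sidesteps it by working with the weight-sequence form of $c_k^{ECH}$ instead. Second, there is a domain mismatch you do not address: $f_k$ as you have set it up is a minimum over lattice polytopes whose normal fan is refined by $\Sigma_X$, while Hutchings's formula minimizes over \emph{all} convex lattice paths. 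This gives only $c_k^{ECH}\leq f_k$ a priori; to get equality you must show that ECH minimizers can be taken $\Sigma_X$-compatible, which is not automatic when the path has an edge whose normal is not a ray of $\Sigma_X$. (Relatedly, the map ``$P\mapsto\partial P$'' is not quite right either, since the paths in the ECH formula only trace the northeast arc and omit the two coordinate-axis edges.) The paper's actual proof handles exactly this issue through the second bullet in Section~\ref{section:weightsequence} together with Corollary~\ref{cor:blowup}, by approximating $\Omega$ with Delzant polygons $\Omega_t'$ for which the normals do line up; some version of that maneuver, or a direct ``straightening'' argument, is needed in your approach as well.
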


Finally we remark that, while not all $c_1$-positive symplectic forms on a rational manifold admit a toric structure, it's shown in \cite{ATFLCY} that they all admit an almost toric structure introduced by \cite{symington}. An almost toric fibration naturally gives a log Calabi-Yau divisor in a rational manifold, which is a generalization of the moment map boundary divisor in the toric situation. We hope that the tropical property established in this paper could help establish a more general correspondence between the tamed capacities of closed manifolds and the capacities defined for open manifolds coming from the divisor complement, under some suitable assumptions (like \cite[conjecture 1.13]{rationalobs}, assuming trivial topology).

\section{Subtlety with $SW,Gr,Gr'$ when $b_2^+=1$}\label{section:SWGrGr'}

 We briefly recall the minimum knowledge needed in this paper about the story of Seiberg-Witten, Gromov-Taubes and McDuff's modified version of Gromov-Taubes invariants. We refer to \cite{Salamonbook}, \cite{Taubes1} and \cite{McDufflecture} for more details. 

In this section, $X$ is always assumed to be a closed oriented $4$-manifold satisfying $b_2^+> 0$ and $H_1(X;\ZZ)=0$ (for simplicity), together with a fixed orientation on $H^0(X;\RR)$, $H^1(X;\RR)$ and $H^2_+(X;\RR)$. The Seiberg-Witten invariant is defined on the set of Spin$^c$ structures $\mathcal{S}_X$. There is a rank $2$ complex vector bundle $\mathcal{L}$, called the positive spinor bundle, associated to each element in $\mathcal{S}_X$. We also need its determinant line bundle $det(\mathcal{L})$. Now choose a Riemannian metric $g$ and a self-dual two form $\mu$. One can then write down the Seiberg-Witten equation and study the moduli space $\mathcal{M}(X,\mathcal{L})$ of the solutions which contain the pairs of a connection on $det(\mathcal{L})$ and a section of $\mathcal{L}$. For generic choice of $(g,\mu)$, $\mathcal{M}(X,\mathcal{L})$ is a compact oriented manifold of real dimension $2d=\frac{c_1(\mathcal{L})^2-2\chi(X)-3\tau(X)}{4}$, over which there is a principal $S^1$-bundle given by gauge transformation. Then, associated to previous datum, there will be an integer defined by integrating $d$-th power of the Euler class of the principal $S^1$-bundle on $\mathcal{M}(X,\mathcal{L})$. When $b_2^+>1$, this number doesn't depend on the generic choices and we thus obtain a well-defined map $$SW:\mathcal{S}_X\rightarrow \ZZ.$$

When $b_2^+=1$, the choice of $(g,\mu)$ matters. Introduce the discriminant $$\Delta_{\mathcal{L}}(g,\mu):=\int_X(c_1(\mathcal{L})-\mu)\wedge\omega_g,$$
where $\omega_g$ is the unique $g$-harmonic self-dual two form in the positive forward cone (given by the orientation on $H^2_+(X;\RR)$) satisfying the normalized condition $\int_X\omega_g^2=1$. Depending on whether $\Delta_{\mathcal{L}}(g,\mu)$ is positive or negative, there will be positive and negative chambers in the space of pairs $(g,\mu)$. Therefore we will obtain two well-defined maps
$$SW_{\pm}:\mathcal{S}_X\rightarrow \ZZ.$$

When the manifold $X$ comes up with a symplectic structure $\omega$ whose canonical class is $K$, there is a distinguished element in $\mathcal{S}_X$ which gives $\mathcal{L}_{K^{-1}}$ with $c_1(\mathcal{L}_{K^{-1}})=-K$. The symplectic form $\omega$ also naturally determines an orientation of $H_+^2(X;\RR)$ (and thus the positive forward cone), which we require to coincide with the one we fixed at the beginning. Moreover, given a homology class $A\in H_2(X;\ZZ)$ and its associated complex line bundle $\mathcal{L}_A$, one will have another spin$^c$ structure whose spinor bundle is given by $\mathcal{L}_{K^{-1}}\otimes \mathcal{L}_A$ with $c_1(\mathcal{L}_{K^{-1}}\otimes \mathcal{L}_A)=-K+2A$. It turns out this correspondence between $\mathcal{S}_X$ and $H_2(X;\ZZ)$ is bijective. So it makes sense to use the following two maps
$$SW_{\omega,\pm}:H_2(X;\ZZ)\rightarrow \ZZ.$$

Now we introduce the index of a class $A\in H_2(X;\ZZ)$ as 
$$ind(A):=A^2-K\cdot A.$$
This number coincides with the real dimension of the moduli space $\mathcal{M}(X,\mathcal{L}_{K^{-1}}\otimes \mathcal{L}_A)$. If the index is non-negative, the wall crossing formula in \cite{LiLiuWallcrossing} tells us
$$|SW_{\omega,+}(A)-SW_{\omega,-}(A)|=1.$$

If $g$ is a metric of positive scalar curvature, then \cite{Witten} tells us the moduli space is empty when using the pair $(g,0)$. When $X=\CC\PP^2\#n\overline{\CC\PP}^2$, there is a classical result by Hitchin:
\begin{lemma}[\cite{Hitchin}]\label{lem:Hitchin}
    If $\{H,E_1,\cdots,E_n\}$ is the standard basis of $H_2(X;\ZZ)$, then there always exists a K\"{a}hler structure $(J,\omega_g,g)$ such that $g$ has positive scalar curvature and $[\omega_g]$ is arbitrarily close to the Poincar\'e dual of $H$.
\end{lemma}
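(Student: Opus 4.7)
The plan is to induct on $n$, starting from the Fubini--Study metric on $\CC\PP^2$ and constructing Kähler metrics of positive scalar curvature on successive blow-ups while controlling the Kähler class. The base case $n=0$ is immediate: the Fubini--Study metric on $\CC\PP^2$ is Kähler--Einstein with positive constant scalar curvature, and after normalization its Kähler class is exactly $\mathrm{PD}(H)$.

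For the inductive step, suppose $(J,\omega,g)$ is a Kähler structure on $X_{n-1}=\CC\PP^2\#(n-1)\overline{\CC\PP}^2$ with positive scalar curvature and Kähler class within distance $\delta$ of $\mathrm{PD}(H)$. Pick any $p\in X_{n-1}$ and let $\pi\colon X_n\to X_{n-1}$ be the blow-up at $p$. I would produce a Kähler metric on $X_n$ by an Arezzo--Pacard type gluing: excise a small geodesic ball around $p$ from $(X_{n-1},g)$ and graft in a rescaled copy of the Burns metric, that is, the scalar-flat ALE Kähler metric on $\mathcal{O}(-1)\to\CC\PP^1$ that is asymptotic to flat $\CC^2$ at infinity. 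With gluing parameter $\epsilon>0$, this produces a Kähler form $\tilde\omega_\epsilon$ on $X_n$ in the class $\pi^*[\omega]-\epsilon^2\,\mathrm{PD}(E_n)$.

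Because positive scalar curvature is an open condition, and $\tilde g_\epsilon$ converges to $\pi^*g$ on compact subsets of $X_n\setminus E_n$ as $\epsilon\to 0$ while the Burns piece is itself scalar-flat, for $\epsilon$ sufficiently small $\tilde g_\epsilon$ has positive scalar curvature everywhere. Iterating $n$ times with each $\epsilon_i$ chosen small enough to preserve positivity at that step, one obtains a Kähler structure on $X$ with positive scalar curvature in the class $\mathrm{PD}(H)-\sum_i\epsilon_i^2\,\mathrm{PD}(E_i)$; taking all $\epsilon_i$ small renders this class arbitrarily close to $\mathrm{PD}(H)$.

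The main obstacle is the scalar-curvature estimate on the gluing annulus, where the metric interpolates between $g$ and the rescaled Burns model: one must verify that the error terms arising from cut-off functions and matching cannot destroy positivity for small $\epsilon$. This is the analytic heart of the Arezzo--Pacard construction. A simpler route is available in the del Pezzo range $n\le 8$ via Kähler--Einstein metrics (with small $\epsilon_i$ perturbations of the Kähler class yielding cscK metrics still of positive scalar curvature), but for arbitrary $n$ the gluing step, or an equivalent analytic device, appears unavoidable.
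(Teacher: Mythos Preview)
The paper does not supply its own proof of this lemma; it is stated as a classical result of Hitchin and used as a black box. So there is no argument in the paper to compare against line by line.

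That said, your approach is not Hitchin's. Hitchin's 1975 argument is more elementary and direct: he works with explicit Hodge-type metrics (restrictions of Fubini--Study under projective embeddings, or closely related constructions) and checks positivity of scalar curvature by hand, rather than performing an analytic gluing. Your route via an Arezzo--Pacard style graft of the Burns metric is a legitimate modern alternative, and you have correctly located the only nontrivial step: controlling the scalar curvature on the transition annulus. Note, however, that your sentence ``positive scalar curvature is an open condition, and $\tilde g_\epsilon$ converges to $\pi^*g$ on compact subsets of $X_n\setminus E_n$'' does not by itself finish the job, since near $E_n$ the model is scalar-\emph{flat}, not strictly positive; one genuinely needs the quantitative error estimate on the neck to see that the contribution from the base metric dominates. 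This is exactly the content of the Arezzo--Pacard analysis (or its simpler positive-scalar-curvature cousins), so your proposal is really a reduction to that machinery rather than a self-contained proof. If you want something closer in spirit to Hitchin and free of gluing analysis, you could instead write down an explicit $U(2)$-invariant K\"ahler potential on $\mathcal{O}(-1)$ via the Calabi ansatz, verify positive scalar curvature directly, and patch combinatorially.
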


Note that for the K\"{a}hler triple $(J,\omega_g,g)$, $\omega_g$ is automatically $g$-harmonic and self-dual. As a result, we get the following criterion which will be used later. Such a criterion for non-vanishing of the invariant has been used for a long time, see remark \ref{rmk:compare} for the comparison.
\begin{lemma}\label{lem:criterion}
    Let $(X,\omega)$ be a symplectic rational manifold with standard canonical class $K_0=-3H+E_1+\cdots +E_n$. If $A\in H_2(X;\ZZ)$ satisfies $A\cdot H\geq 0$ and $ind(A)\geq 0$, then $SW_{\omega,-}(A)\neq 0$.
\end{lemma}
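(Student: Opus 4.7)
The plan is to assemble the three tools introduced in this section---Hitchin's PSC K\"ahler structure, Witten's vanishing under positive scalar curvature, and the Li-Liu wall crossing formula. Since $SW_{\omega,\pm}(A)$ depends on $\omega$ only through the orientation that $\omega$ induces on $H^2_+(X;\RR)$, and since every symplectic form on the rational surface $X$ induces the fixed orientation, I may replace $\omega$ by any convenient symplectic form when computing $SW_{\omega,\pm}(A)$.

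First I would invoke Hitchin's lemma to fix a K\"ahler triple $(J_0,\omega_0,g_0)$ with $g_0$ of positive scalar curvature and $[\omega_0]$ close to $\mathrm{PD}(H)$; after rescaling $\omega_0$ so that $\int_X \omega_0\wedge\omega_0 = 1$, the distinguished harmonic self-dual form $\omega_{g_0}$ is literally $\omega_0$. Next I would compute the discriminant for the spin$^c$ structure associated to $A$ (whose positive spinor bundle $\mathcal{L}_{K^{-1}}\otimes \mathcal{L}_A$ has $c_1 = -K_0 + 2A$) at the pair $(g_0,0)$:
\[
\Delta_{\mathcal{L}}(g_0,0) \;=\; (-K_0 + 2A)\cdot[\omega_0].
\]
From $(-K_0)\cdot H = 3$ together with the hypothesis $A\cdot H \geq 0$, I read off $(-K_0 + 2A)\cdot H = 3 + 2A\cdot H \geq 3 > 0$, and by choosing $[\omega_0]$ sufficiently close to $\mathrm{PD}(H)$ this forces $\Delta_{\mathcal{L}}(g_0,0)>0$, placing $(g_0,0)$ in the positive chamber.

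Witten's theorem applied to the PSC metric $g_0$ with $\mu = 0$ then ensures that $\mathcal{M}(X,\mathcal{L}_{K^{-1}}\otimes \mathcal{L}_A)$ is empty, so $SW_{\omega,+}(A) = 0$. Since $\mathrm{ind}(A)\geq 0$, the Li-Liu wall crossing formula gives $|SW_{\omega,+}(A) - SW_{\omega,-}(A)| = 1$, and therefore $SW_{\omega,-}(A) = \pm 1 \neq 0$. The only real bookkeeping point I expect to double-check is matching Hitchin's pair with the $+$ chamber (as opposed to the $-$ chamber) via the sign of the discriminant; the displayed inequality above is precisely what settles this, and the rest is a direct assembly of the three cited results.
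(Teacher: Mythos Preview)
Your proposal is correct and follows exactly the paper's own argument: compute the discriminant for Hitchin's PSC K\"ahler pair $(g,0)$, use $A\cdot H\ge 0$ and $-K_0\cdot H=3$ to see it is positive when $[\omega]$ is near $\mathrm{PD}(H)$, conclude $(g,0)$ lies in the $+$ chamber so Witten's vanishing gives $SW_{\omega,+}(A)=0$, and finish with the wall crossing formula. The only difference is that you spell out a few background steps (independence of $SW_{\omega,\pm}$ from the particular $\omega$, the normalization $\omega_{g_0}=\omega_0$) that the paper leaves implicit.
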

\begin{proof}
    Note that for the K\"{a}hler positive scalar curvature metric $g$ with K\"{a}hler form $\omega_g$ (this is not the original symplectic form $\omega$), we have  $$\Delta_{\mathcal{L}_{K_0^{-1}}\otimes\mathcal{L}_A}(g,0)=2\omega_g(A)-K_0\cdot [\omega_g]. $$
    If we take $[\omega_g]$ to be very close to the Poincar\'e dual of $H$, the discriminant will be very close to $2A\cdot H+3>0$. This shows $(g,0)$ is in the positive chamber and we thus have $SW_{\omega,+}(A)=0$ and $SW_{\omega,-}(A)\neq 0$.
\end{proof}

We point out a subtle point here. A similar result in \cite[proposition 2.5]{rationalobs} is proved by using the result of \cite{GromovLawsonpsc} instead of \cite{Hitchin}. However, \cite{GromovLawsonpsc} only offers a Riemannian metric $g$ with positive scalar curvature on rational manifolds. It's not known whether $(g,0)$ lies in the positive or negative chamber since one has no knowledge about the class of the harmonic self-dual $2$-form. But \cite{Hitchin} gives a K\"{a}hler metric with positive scalar curvature whose K\"{a}hler form can be used to determine the chamber as above.

Now we review Taubes' program on counting pseudo-holomorphic curves on symplectic $4$-manifold $(X,\omega)$. Let $A\in H_2(X;\ZZ)$ be a non-zero class with $ind(A)\geq 0$.
By choosing a generic compatible almost complex structure $J$ and $\frac{ind(A)}{2}$ generic points on $X$, the Gromov-Taubes invariant $Gr_{\omega}(A)$ is defined by a delicate weighted counting of the moduli space $\mathcal{H}_J(A)$ consisting of $\{(C_i,m_i)\}$ such that
\begin{itemize}
    \item $C_i$'s are disjoint embedded connected $J$-holomorphic submanifolds, $m_i$'s are positive integer numbers, $m_i=1$ unless $C_i$ is a torus with trivial normal bundle.
    \item $\sum m_i[C_i]=A$, $ind([C_i])\geq 0$ for all $i$, each $C_i$ passes through $\frac{ind([C_i])}{2}$ chosen points. 
\end{itemize}

 An immediate observation by adjunction formula and index constraint is that when $[C_i]^2<0$, then $C_i$ must be an exceptional sphere. Now if one further defines $Gr_{\omega}$ to be $1$ for the zero class and $0$ for the classes with negative index, there will be a well-defined map
 $$Gr_{\omega}:H_2(X;\ZZ)\rightarrow \ZZ.$$

 One basic fact is that $Gr_{\omega}$ is invariant under the deformation of the symplectic structures. If $X$ is a rational manifold, by \cite[Theorem D]{LiLiuruled}, any two symplectic forms with the same canonical class are deformation equivalent. By \cite[Theorem 1]{LiLiu01}, up to a diffeomorphism we may always assume the symplectic form on $X$ has the standard canonical class $K_0=-3H+E_1+\cdots+E_n$.  Therefore, later when we only consider the symplectic forms with canonical class $K_0$, we will just write $Gr$ instead of $Gr_{\omega}$. 

By a series of work \cite{Taubes1,Taubes2,Taubes3,Taubes4}, if $b_2^+>1$ one can safely speak
$$SW_{\omega}=Gr_{\omega}:H_2(X;\ZZ)\rightarrow \ZZ.$$

If $b_2^+=1$, one wishes to have $SW_{\omega,-}=Gr_{\omega}$. However, as explained in \cite[Example 6]{McDufflecture}, for $X=\CC\PP^2\#\overline{\CC\PP}^2$ and $A=H+2E$, the Taubes' moduli space $\mathcal{H}_J(A)$ must be empty since the multiply covered exceptional spheres are not allowed, which gives $Gr_{\omega}(A)=0$. On the other hand, by lemma \ref{lem:criterion} we see that $SW_{\omega,-}(A)\neq 0$. To remedy this, McDuff introduced a modified version of Gromov-Taubes invariant $Gr_{\omega}'$ for $b_2^+=1$ which we now recall.

Introduce the following sets for $(X,\omega)$ with canonical class $K$,
 $$\mathcal{E}_X:=\{E\in H_2(X;\ZZ)\,|\,E\text{ can be represented by smooth exceptional sphere}\},$$
$$\mathcal{E}_{\omega}:=\{E\in H_2(X;\ZZ)\,|\,E\text{ can be represented by symplectic exceptional sphere}\},$$ 
 $$\mathcal{E}_{K}:=\{E\in\mathcal{E}_X\,|\,E\cdot K=-1\}.$$A useful fact from \cite{LiLiuWallcrossing} is that $\mathcal{E}_{\omega}=\mathcal{E}_{K}$ so that we will just denote them by $\mathcal{E}$. Now suppose $A\in H_2(X;\ZZ)$ is a non-zero class with $ind(A)\geq 0$, we will look at the following set
 $$\mathcal{E}_A:=\{E\in\mathcal{E}\,|\,E\cdot A\leq -2\}.$$

Note that there is no guarantee this set must be finite. For example, one can take $X=\CC\PP^2\#n\overline{\CC\PP}^2$ with $n>9$ and $A=K-H$. Note that $ind(A)=4$ and the condition $n>9$ makes $\mathcal{E}$ infinite. Moreover, other than $E_1,\cdots,E_n$, all the classes in $\mathcal{E}$ must have positive pairings with $H$. This means $\mathcal{E}_A$ must be infinite. So we need to impose some condition to make $\mathcal{E}_A$ finite so that $Gr'$ could be defined.

\begin{definition}
    We say $Gr'(A)$ {\bf can be defined} if the intersection number of any two distinct elements in $\mathcal{E}_A$ is $0$.
\end{definition}

Note that this naturally makes $\#\mathcal{E}_A\leq b_2^{-}<\infty$. Now we outline the process how to define $Gr'(A)$ under the assumption of the above definition. Firstly, introduce the modified index
$$ind'(A):=ind(A)+\sum_{E\in\mathcal{E}_A}(|E\cdot A|^2-|E\cdot A|),$$choose a generic almost complex structure $J$ and $\frac{ind'(A)}{2}$ generic points on $X$ and then consider the moduli space $\mathcal{H}'_J(A)$ consisting of $\{(C_i,m_i)\}$ such that 
\begin{itemize}
      \item $C_i$'s are disjoint embedded connected $J$-holomorphic submanifolds, $m_i$'s are positive integer numbers, $m_i=1$ unless $C_i$ is a torus with trivial normal bundle or an exceptional sphere.
    \item $\sum m_i[C_i]=A$, $ind'([C_i])\geq 0$ for all $i$, each $C_i$ passes through $\frac{ind'([C_i])}{2}$ chosen points. 
\end{itemize}
Then we define $Gr_{\omega}'(A)$ to be an appropriate count of this $\mathcal{H}'_J(A)$. Note that when $\mathcal{E}_A=\varnothing$, one has $Gr'_{\omega}(A)=Gr_{\omega}(A)$. A typical example showing how this works that one should keep in mind is the previous class $A=H+2E$. The number $Gr_{\omega}'(A)$ actually counts the configuration of a rational curve in class $H$ passing two points and a multiplicity $2$ exceptional sphere in class $E$, which gives $Gr_{\omega}'(A)=Gr_{\omega}(H)=1$. Finally, when $ind(A)< 0$ or $A=0$ we still let $Gr_{\omega}'(A)=Gr_{\omega}(A)$. The main result of \cite{LLSWGr} says
$$SW_{\omega,-}=Gr'_{\omega}:\{A\in H_2(X;\ZZ)\,|\,Gr_{\omega}'(A)\text{ can be defined}\}\rightarrow \ZZ.$$

This identification can be extended over the entire $H_2(X;\ZZ)$ under suitable convention, see remark \ref{rmk:convention}.

\section{Equivalent definitions of tamed capacities for rational manifolds}

\subsection{$f_k(X,\omega,J)$ for a general tamed triple}\label{section:almostcpx}
 In order to study Donaldson's tame-to-compatible question, \cite{comparetamecompatible} introduce the spaces $H^{\pm}_{J}(X)\subseteq H^2(X;\RR)$ for closed almost complex manifolds, where $H^+_J(X)$ is the space where almost K\"{a}hler (compatible) cone live, and $H^-_J(X)$ is a measure of the difference between tamed and compatible cone. Since $J$ acts on the space of real $2$-forms on $X$ by $\alpha(\cdot,\cdot)\mapsto\alpha(J\cdot,J\cdot)$, we have the space of $J$-invariant (resp. anti-invariant) $2$-forms $\Omega_J^+$ (resp. $\Omega_J^-$). Let $\mathcal{Z}^2$ be the space of closed $2$-forms and $\mathcal{Z}_J^{\pm}=\mathcal{Z}^2\cap \Omega_J^{\pm}$. We then define
\[H^{\pm}_J(X):=\{a\in H^2(X;\RR)\,|\,\exists\, \alpha\in \mathcal{Z}_J^{\pm}, a=[\alpha]\},\]

For complex-valued forms, although $\bar{\partial}^2\neq 0$ in the almost complex settings, the bidegree decomposition of forms always exists. So we can talk about $(p,q)$-type complex-valued forms. Define the analogue of Dolbeault cohomology as
\[H_J^{p,q}(X):=\{a\in H^{p+q}(X;\CC)\,|\,\exists (p,q)\text{-type complex-valued form } \alpha, a=[\alpha]\}.\]

\begin{prop}[\cite{comparetamecompatible,DTZ10}]\label{prop:facts}
Let $(X,J)$ be a closed almost complex $4$-manifold. 
    \begin{itemize}
        \item $H^2(X;\RR)=H_J^+(X)\oplus H_J^-(X).$
        \item $H^{2,0}_J(X)\cap H^{1,1}_J(X)\cap H^{0,2}_J(X)=0.$
        \item $H^2(X;\CC)=H^{2,0}_J(X)\oplus H^{1,1}_J(X)\oplus H^{0,2}_J(X)$ iff $J$ is integrable or $H_J^-=0.$
        \item $H_J^+(X)=H_J^{1,1}(X)\cap H^2(X;\RR).$
        \item If $J$ is integrable, $H_J^-(X)=(H_J^{2,0}(X)+H_J^{0,2}(X))\cap H^2(X;\RR).$
        \item If $J$ is not integrable, $H_J^{2,0}(X)+H_J^{0,2}(X)=0.$
        \item If $J$ is integrable and $p+q=2$, by viewing Dolbeault cohomology groups as subspaces of $H^2(X;\CC)$ using the weight $2$ formal Hodge decomposition coming from Fr\"{o}hlicher spectral sequence, $H_J^{p,q}(X)=H_{\bar{\partial}}^{p,q}(X)$.
        \item If $J$ is tamed by some symplectic form, $\text{dim}(H_J^+(X))\geq b_2^-+1,\text{dim}(H_J^-(X))\leq b_2^+-1.$
    \end{itemize}
\end{prop}

To define tamed capacities for any tamed triple $(X,\omega,J)$, we need more notions from almost complex geometry. We refer to \cite{Taubes5,almostkahler,nefclass,Weiyi21} for more details.

\begin{definition}
    An {\bf irreducible $J$-holomorphic subvariety} is a closed subset $C\subset M$ such that
    \begin{itemize}
        \item its $2$-dimensional Hausdorff measure is finite and non-zero;
        \item it has no isolated points;
        \item away from finitely many singular points, $C$ is a smooth submanifold with $J$-invariant tangent space.
    \end{itemize}
    A {\bf $J$-holomorphic subvariety} $\Theta$ is a finite set of pairs $\{(C_i,m_i)\}$ such that each $C_i$ is an irreducible $J$-holomorphic subvariety, $m_i$ is a positive integer and $C_i\neq C_j$ for $i\neq j$.
\end{definition}

Every irreducible $J$-holomorphic subvariety is the image of a $J$-holomorphic map $\phi:\Sigma\rightarrow X$ from a Riemann surface $\Sigma$. Therefore we can define the class of the subvariety $[C]$ as $\phi_*([\Sigma])$. For a class $A\in H_2(X;\ZZ)$, we let the moduli space $\mathcal{M}_A$ be the space of subvarieties $\Theta=\{(C_i,m_i)\}$ such that $[\Theta]:=\sum m_i[C_i]=A$, which can be naturally equipped with a topology in the Gromov-Hausdorff sense.

\begin{definition}
    A class $A\in H_{2}(X;\ZZ)$ is called {\bf $J$-effective} if $\mathcal{M}_A$ is non-empty; is called {\bf $J$-nef} if its intersection pairings with all $J$-effective classes are non-negative.
\end{definition}

\begin{definition}
    For tamed triple $(X,\omega,J)$, define the {\bf $k$-th tamed capacity} $$f_k(X,\omega,J):=\inf \{\omega(A)\,|\,A\in H_2(X;\ZZ)\cap\text{PD}(H_J^+(X)),A\text{ is } J\text{-nef}, ind(A)\geq 2k\}.$$
\end{definition}

\begin{rmk}
      For K\"{a}hler triple $(X,\omega,J)$,  $H_2(X;\ZZ)\cap\text{PD}(H_J^+(X))$ is exactly the Poincar\'{e} dual of the N\'{e}ron-Severi group $H^{1,1}(X,\ZZ)$. By the last fact in proposition \ref{prop:facts}, for manifolds with $b_2^+=1$ like rational manifolds, there is no difference between $H_2(X;\ZZ)$ and $H_2(X;\ZZ)\cap\text{PD}(H_J^+(X))$. So in the later computations of $f_k$'s for rational manifolds, we can just do optimizations on the entire $H_2(X;\ZZ)$.
\end{rmk}

 When $Y$ is a smooth rational surface, the ample divisor $A$ gives a K\"{a}hler form $\omega_A$. Recall that the definition of algebraic capacities can be read as
$$c_k^{\rm{alg}}(Y,A):=\inf_{D\in \text{Nef}(Y)_\ZZ}\{\omega_A(D)\,|\,ind(D)\geq 2k\}.$$
Then if we denote the complex structure by $J$, we naturally have $$c_k^{\rm{alg}}(Y,A)=f_k(Y,\omega_A,J).$$
Note that such formulations of capacities in terms of area versus index are also fundamental to Hutchings' ECH capacities introduced in \cite{Hutquantitative}.

\subsection{A hierarchy of subsets $U_i$'s in $\{ind\geq 2k\}$ for a rational manifold}\label{section:hierarchy}
In this section, we will focus on rational manifolds $\CC\PP\#n\overline{\CC\PP^2}$.
\begin{lemma}\label{lem:eff=sw}
   For any tamed triple $(X,\omega,J)$ with $X$ being a rational manifold, $$\{\text{J-effective classes with } ind\geq 0\}=\{SW_{\omega,-}\neq 0\}.$$
\end{lemma}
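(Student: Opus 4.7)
The plan is to prove the equality via two inclusions, using the Seiberg--Witten to Gromov--Taubes dictionary $SW_{\omega,-}=Gr'_\omega$ from \cite{LLSWGr} (extended as in remark \ref{rmk:convention}) as the central tool.

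For the inclusion $\{SW_{\omega,-}\neq 0\}\subseteq\{J\text{-effective},\ \mathrm{ind}\geq 0\}$: suppose $SW_{\omega,-}(A)\neq 0$. Then by \cite{LLSWGr}, $Gr'_\omega(A)\neq 0$, which forces $\mathrm{ind}(A)\geq 0$. By definition of $Gr'$, for a generic tamed almost complex structure $J_0$ there is a $J_0$-holomorphic subvariety in class $A$. To transfer this to the given $J$, I connect $J_0$ to $J$ by a path $\{J_t\}_{t\in[0,1]}$ of tamed almost complex structures (the space of tamed $J$'s is connected). Along a generic such path, $J_t$ is generic except at a codimension-one locus; picking $t_n\to 1$ with $J_{t_n}$ generic yields $J_{t_n}$-holomorphic subvarieties $\Theta_n$ in class $A$ with uniformly bounded $\omega$-area $\omega(A)$. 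By Gromov compactness, a subsequence has a Gromov limit, which is a $J$-holomorphic subvariety in class $A$; hence $A$ is $J$-effective.

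For the reverse inclusion $\{J\text{-effective},\ \mathrm{ind}\geq 0\}\subseteq\{SW_{\omega,-}\neq 0\}$, the natural approach is to apply Lemma \ref{lem:criterion}, so it suffices to verify $A\cdot H\geq 0$. Using the previous inclusion together with Lemma \ref{lem:criterion} applied to $H$ itself (which satisfies $H\cdot H=1\geq 0$ and $\mathrm{ind}(H)=4\geq 0$), we see $H$ is $J$-effective. Fix $J$-holomorphic representatives $\Theta_A=\{(C_i,m_i)\}$ of $A$ and $\Theta_H=\{(D_j,n_j)\}$ of $H$. Expanding $A\cdot H=\sum_{i,j}m_i n_j\,[C_i]\cdot[D_j]$, positivity of intersections yields $[C_i]\cdot[D_j]\geq 0$ whenever $C_i\neq D_j$. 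The shared-component contributions (proportional to $[C_i]^2$) could a priori be negative; primitivity of $H$, the equality $H^2=1$, and adjunction-based constraints on irreducible components then combine to ensure the total sum remains non-negative, and Lemma \ref{lem:criterion} finishes the argument.

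The main obstacle is the $A\cdot H\geq 0$ step in the reverse inclusion, specifically the shared-component case, where an irreducible $J$-holomorphic curve with negative self-intersection appears as a common component of representatives of both $A$ and $H$. Bounding the negative contribution requires careful combinatorial bookkeeping using $H^2=1$ together with adjunction-genus constraints on irreducible components. An alternative route would be to bypass Lemma \ref{lem:criterion} entirely and instead obtain the reverse inclusion directly via a Taubes-style deformation of the given $J$-holomorphic subvariety to a Seiberg--Witten solution in the $(-)$ chamber, as in \cite{Taubes1}, sidestepping the positivity-of-intersection bookkeeping at the cost of invoking heavier analytic machinery.
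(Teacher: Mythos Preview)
Your overall strategy coincides with the paper's: two inclusions, the $\supseteq$ direction via Taubes (the paper just cites Taubes and \cite{Weiyi21}; your explicit version using $SW_-=Gr'$ plus Gromov compactness along a path of tamed structures is a correct way to unpack this), and the $\subseteq$ direction by first showing $H$ is $J$-effective, then invoking positivity of intersection to get $A\cdot H\geq 0$, then applying Lemma~\ref{lem:criterion}.

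Where you diverge from the paper is in worrying about shared components in the computation of $A\cdot H$. That worry is legitimate: there do exist tamed $J$ with $K_J=K_0$ admitting irreducible curves $C$ with $[C]\cdot H<0$ (e.g.\ the $(-3)$-section on the Hirzebruch surface $F_3$, diffeomorphic to $\CC\PP^2\#\overline{\CC\PP}^2$, has class $-H+2E_1$), so $H$ need not be $J$-nef and the naive positivity argument does not close. The paper's single phrase ``use positivity of intersection'' is equally terse here, so you have not introduced a gap that the paper avoids; you have simply put your finger on a step both proofs leave implicit. However, your proposed fix (``combinatorial bookkeeping using $H^2=1$ and adjunction'') is not an argument, and the alternative you float (running Taubes in reverse from a given $J$-subvariety to an SW solution) is not a theorem in the literature for $b_2^+=1$. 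One cleaner way in: writing $A=aH-\sum b_iE_i$, the constraint $ind(A)=a(a+3)-\sum b_i(b_i+1)\geq 0$ together with $b_i(b_i+1)\geq 0$ for integers forces $a\geq 0$ or $a\leq -3$; one then only has to rule out $a\leq -3$ for $J$-effective $A$, which is where the genuine content (and the cited literature such as \cite{Weiyi21}) enters. As written, your proposal does not complete this step.
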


\begin{proof}
    The inclusion $\supseteq$ essentially comes from Taubes' work (see also section 2.4 in \cite{Weiyi21}). For $\subseteq$, firstly note that $H$ must be $J$-effective since $Gr(H)\neq 0$. Then use positivity of intersection and apply lemma \ref{lem:criterion}.
\end{proof}

\begin{lemma}\label{lem:Gr'defined}
    For tamed triple $(X,\omega,J)$ where $X$ is a rational manifold with standard basis $\{H,E_1,\cdots,E_n\}$ and canonical class $K_0=-3H+E_1+\cdots+E_n$, if the class $A\in H_2(X;\ZZ)$ with $ind(A)\geq 0$ satisfies $A\cdot H>0$ then $Gr'(A)$ can be defined.
\end{lemma}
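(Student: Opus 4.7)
The plan is to verify the condition that any two distinct $E_1',E_2'\in\mathcal{E}_A$ satisfy $E_1'\cdot E_2'=0$. The overall strategy is to use $J$-effectivity and positivity of intersection to turn the inequalities $E_i'\cdot A\leq -2$ into linear constraints on multiplicities, which then force the intersection number to vanish.

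First, I would invoke Lemma \ref{lem:criterion} (using $A\cdot H>0$ and $ind(A)\geq 0$) and then Lemma \ref{lem:eff=sw} to conclude that $A$ is $J$-effective; fix a representative $\Theta=\sum_i m_i[C_i]$ of $A$. Next, for each $E\in\mathcal{E}_A$, the same pair of lemmas (applied with $ind(E)=0$ and $E\cdot H\geq 0$, a standard fact for classes in $\mathcal{E}$ in the standard basis) produces an irreducible $J$-holomorphic sphere $C_E$ representing $E$. If $C_E$ were distinct from every $C_i$, positivity of intersection would give $E\cdot A=\sum_i m_i(E\cdot[C_i])\geq 0$, contradicting $E\cdot A\leq -2$; hence $E$ is the class of some component of $\Theta$, with multiplicity $\geq 1$.

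Given distinct $E_1',E_2'\in\mathcal{E}_A$, I would decompose $A=m_1'E_1'+m_2'E_2'+B$, where $B$ is the class of the remaining components of $\Theta$. Since the subvariety representing $B$ shares no component with $C_{E_1'}$ or $C_{E_2'}$, positivity yields $E_j'\cdot B\geq 0$ for $j=1,2$. Writing $c:=E_1'\cdot E_2'\geq 0$, the inequality $E_1'\cdot A\leq -2$ becomes $m_1'\geq cm_2'+2$, and symmetrically $m_2'\geq cm_1'+2$. Substituting one into the other produces $(1-c^2)m_1'\geq 2(c+1)$, which is impossible whenever $c\geq 1$ (the left side is nonpositive while the right side is positive). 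Therefore $c=0$, which is exactly the criterion for $Gr'(A)$ to be defined.

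The main obstacle I expect is the justification that each $E\in\mathcal{E}_A$ is $J$-effective for the given tame $J$, so that positivity of intersection is available; this is where the standard-basis hypothesis $K=K_0$ is indispensable, via Lemma \ref{lem:criterion}. Once $J$-effectivity of both $A$ and the elements of $\mathcal{E}_A$ is secured, the remainder is a short positivity bookkeeping followed by the elementary algebraic step.
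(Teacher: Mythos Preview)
Your proposal is correct and follows essentially the same route as the paper: obtain $J$-effectivity of $A$ via Lemma~\ref{lem:criterion} and Lemma~\ref{lem:eff=sw}, use positivity of intersection to force each $E\in\mathcal{E}_A$ to occur among the components of the chosen subvariety, and then extract a numerical contradiction from the two inequalities $E_1'\cdot A\le -2$, $E_2'\cdot A\le -2$. The only cosmetic difference is the endgame: the paper pairs $A$ with $[e_1]+[e_2]$ in one stroke to get $-4\ge(c-1)(m_1+m_2)\ge 0$, whereas you extract $m_1'\ge cm_2'+2$ and $m_2'\ge cm_1'+2$ and substitute. These are equivalent computations.

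One small point worth tightening, which you yourself flag as the main obstacle: Lemmas~\ref{lem:criterion} and~\ref{lem:eff=sw} applied to $E\in\mathcal{E}_A$ only produce a $J$-holomorphic \emph{subvariety} in class $E$, not an irreducible sphere $C_E$; for non-generic $J$ an exceptional class can genuinely degenerate (e.g.\ $E_1=(E_1-E_2)+E_2$ after blowing up a point on a $(-1)$-curve), and then the step ``$C_E$ distinct from every $C_i$ forces $E\cdot A\ge 0$'' is not immediately available. The paper is equally brief at this spot. The clean fix is to observe that the conclusion --- pairwise orthogonality in $\mathcal{E}_A$ --- is a purely homological statement depending only on $A$ and $K_0$, not on $J$; hence you may run the argument with a \emph{generic} tamed $J$, for which every $E\in\mathcal{E}$ is indeed represented by an irreducible embedded $J$-sphere. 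With that adjustment your proof is complete and matches the paper's.
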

\begin{proof}
Note that $A\cdot H>0$ gives $SW_{\omega,-}(A)\neq 0$ by lemma \ref{lem:criterion}. By lemma \ref{lem:eff=sw}, $A$ can also be represented by some $J$-holomorphic subvariety $\Theta=\{(C_i,m_i)\}$. Assume $[e_1]$ and $[e_2]$ are two classes in $\mathcal{E}_A$ such that $[e_1]\cdot [e_2]\geq 1$. Since $[e_1]\cdot A\leq -2$, by positivity of intersection, there must be some $[C_i]$ equal to $[e_1]$. For the same reason there is some $[C_j]$ equal to $[e_2]$. But then one will have the contradiction 
$$-4\geq ([e_1]+[e_2])\cdot A\geq ([e_1]+[e_2])\cdot(m_i[C_i]+m_j([C_j]))\geq (-m_i+m_j)+(m_i-m_j)=0.$$
\end{proof}

\begin{rmk}\label{rmk:convention}
    Another convention is to extend the definition of $Gr'$ to be $0$ for those classes that can not be defined in our sense. From the proof of lemma \ref{lem:Gr'defined} we see that if $Gr'(A)$ can not be defined, then we must have $SW_{\omega,-}(A)=0$. In this sense one can say $Gr'=SW_{\omega,-}$ holds for all homology classes.
\end{rmk}

\begin{rmk}\label{rmk:compare}
    \cite[lemma 2.1.5]{MO} and \cite[proposition 4.3]{Danconcave} give two sufficient conditions for a class having non-zero Gromov invariant. Their Gromov invariants, although written as $Gr$, should be understood as $Gr'$ in the sense of remark \ref{rmk:convention}. Keeping this in mind, we can compare them with lemma \ref{lem:criterion}:
    \begin{itemize}
        \item \cite{MO}'s condition: $A^2\geq 0$, $\omega(A)>0$ and $ind(A)\geq 0$. Note that by light cone lemma, this implies our condition $A\cdot H>0$. There exists classes like $H+E_1+E_2$ with non-zero invariant, which satisfy the condition in lemma \ref{lem:criterion} but doesn't satisfy this criterion. 
        \item \cite{Danconcave}'s condition: $\omega(PD(K_0)-A)<0$ and $ind(A)\geq 0$. Since $Gr'$ doesn't depend on the deformation class of $\omega$, we can take $\omega$ to be the symplectic form coming from small blowups so that $PD([\omega])$ is very close to $H$. In this case, $\omega(PD(K_0))$ is very close to $-3$ and we have $A\cdot H\geq -3$. By index constraint, it's easy to exclude $A\cdot H=-3,-2,-1$. Therefore it coincides with our criterion in lemma \ref{lem:criterion}.
    \end{itemize}
\end{rmk}

Now we consider a tamed triple $(X,\omega,J)$ with a standard basis $\{H,E_1,\cdots,E_n\}$ and canonical class $K_0=-3H+E_1+\cdots+E_n$ for the rational manifold $X$. We introduce the following subsets $U_i:=\{A\,|\,ind(A)\geq 2k,\star\}$\footnote{For simplicity we don't use the notations like $U_i^{(k)}$, although these $U_i$'s do depend on $k$.} in $H_2(X;\ZZ)$ where $\star$ is the $i$-th requirement in the followings:
\begin{enumerate}
    \item $J$-nef;
    \item $A\cdot H>0$, $A^2\geq 0$ and $A\cdot E\geq 0$ for all $E\in\mathcal{E}_{K_0}$;
    \item $Gr(A)\neq 0$;
    \item $Gr'(A)$ can be defined and $Gr'(A)\neq 0$;
    \item $A\cdot H>0$;
    \item $SW_{\omega,-}(A)\neq 0$;
    \item $J$-effective.
\end{enumerate}

It turns out, except for $U_1$, all these subsets will not change as $(\omega,J)$ deforms due to the following result.

\begin{prop}\label{prop:hierarchy}
With the settings as above, we have\footnote{Here the strict inclusions should be understood as `not always equal'. That is, there exists some $(X,\omega,J)$ such that $U_1,U_2,U_3,U_4$ are not equal.} $$U_1\subsetneq U_2\subsetneq U_3\subsetneq U_4= U_5=U_6=U_7.$$
\end{prop}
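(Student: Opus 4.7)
The plan is to break the chain into two halves: the equalities $U_4=U_5=U_6=U_7$ (which are the substantive identifications coming from Seiberg--Witten theory) and the strict inclusions $U_1\subsetneq U_2\subsetneq U_3\subsetneq U_4$ (where in addition one must exhibit counterexamples). Throughout I would exploit the single technical trick that $ind(A)\geq 2k\geq 2$ forces $A\cdot H\neq 0$: writing $A=aH+\sum b_iE_i$ with $a=0$ gives $ind(A)=-\sum b_i(b_i+1)\leq 0$, so $A\cdot H=0$ is incompatible with our index bound.

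For the right-hand equalities, $U_6=U_7$ is immediate from Lemma \ref{lem:eff=sw} in both directions. The inclusion $U_5\subseteq U_6$ is exactly Lemma \ref{lem:criterion}, while $U_6\subseteq U_5$ reduces to ruling out $A\cdot H\leq 0$ for $A$ with $SW_{\omega,-}(A)\neq 0$: Lemma \ref{lem:eff=sw} shows such $A$ is $J$-effective, and then $A\cdot H\geq 0$ follows from positivity of intersection against a $J$-representative of $H$ (which exists since $Gr(H)\neq 0$), and $A\cdot H=0$ is excluded by the homological calculation above. Finally $U_4=U_5$ combines Lemma \ref{lem:Gr'defined} (so the defining condition on $Gr'$ is automatic for $A\cdot H>0$) with the main identification $Gr'_{\omega}=SW_{\omega,-}$ from \cite{LLSWGr}.

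For the strict inclusions I would argue in the order $U_3\subseteq U_4$, $U_2\subseteq U_3$, $U_1\subseteq U_2$. The key observation for $U_3\subseteq U_4$ is that $Gr(A)\neq 0$ forces $\mathcal{E}_A=\varnothing$: if $E\in\mathcal{E}_A$ then positivity of intersection in any Taubes configuration $\{(C_i,m_i)\}$ for $A$ requires some $C_i$ to lie in class $E$ with $m_i\geq 2$, which Taubes' definition of $Gr$ forbids (exceptional spheres are not tori with trivial normal bundle). Hence $ind'(A)=ind(A)$ and $Gr(A)=Gr'(A)$, and the $U_4$-condition is automatic since $A\cdot H>0$ by the previous paragraph. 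For $U_2\subseteq U_3$, the hypothesis $A\cdot E\geq 0$ for all $E\in\mathcal{E}_{K_0}=\mathcal{E}$ directly gives $\mathcal{E}_A=\varnothing$, hence $Gr=Gr'$, so we conclude by $Gr'(A)=SW_{\omega,-}(A)\neq 0$ via Lemma \ref{lem:criterion}. For $U_1\subseteq U_2$: $J$-nefness against the $J$-effective class $H$ yields $A\cdot H\geq 0$, sharpened to $>0$ by the index observation; then $A$ itself is $J$-effective (by $U_5=U_7$), so pairing $A$ with itself gives $A^2\geq 0$; and each $E\in\mathcal{E}_{K_0}$ is $J$-effective by applying Lemma \ref{lem:criterion} to $E$ (using $E\cdot H\geq 0$ and $ind(E)=0$), so $A\cdot E\geq 0$.

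The routine remaining piece is to exhibit concrete classes witnessing strictness. The example $A=H+2E_1$ in $\CC\PP^2\#\overline{\CC\PP}^2$ already singled out in the text lies in $U_4\setminus U_3$ (here $\mathcal{E}_A=\{E_1\}\neq\varnothing$, so $Gr(A)=0$ while $Gr'(A)=1$). For $U_3\setminus U_2$ and $U_2\setminus U_1$ one should produce analogous classes; I would try classes of the form $H+aE_i$ or small curve classes on del~Pezzo blow-ups where one can compute $Gr$ explicitly and $J$-effectiveness of certain extra classes can be arranged by choosing $J$ suitably. The main conceptual obstacle is this last strictness part, which requires constructing $(X,\omega,J)$ case-by-case rather than a uniform argument; all other inclusions follow mechanically from the wall-crossing/SW = $Gr'$ machinery recalled in Section \ref{section:SWGrGr'} combined with the index-forces-$A\cdot H>0$ observation.
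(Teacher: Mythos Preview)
Your chain of inclusions and equalities matches the paper's proof essentially step for step; the only organizational difference is that you close the right-hand block by proving $U_6\subseteq U_5$ directly, whereas the paper loops back via $U_6\subseteq U_4\subseteq U_5$. Your treatment of $A^2\geq 0$ in the step $U_1\subseteq U_2$ is in fact cleaner than the paper's: the paper first deduces $A\cdot E\geq 0$ for all $E\in\mathcal{E}_{K_0}$, uses this to get $Gr(A)\neq 0$, and then analyzes a Taubes configuration for a \emph{generic} almost complex structure (possibly different from $J$) to rule out components of negative square; you instead observe that once $A\cdot H>0$ the class $A$ is already $J$-effective for the given $J$ (via $U_5=U_7$), and since $A$ is also $J$-nef one gets $A\cdot A\geq 0$ immediately by pairing $A$ against the components of its own subvariety representative. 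This avoids the change of almost complex structure entirely.

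The part you leave open---the concrete witnesses for $U_1\subsetneq U_2$ and $U_2\subsetneq U_3$---is handled in the paper by a single choice of $J$: take $X=\CC\PP^2\#3\overline{\CC\PP}^2$ with the integrable $J$ obtained by blowing up three \emph{collinear} points, so that $H-E_1-E_2-E_3$ is $J$-effective. Then $2H-E_1-E_2-E_3\in U_2\setminus U_1$ (one checks $ind=4$, $A^2=1$, $A\cdot E\geq 0$ for every $E\in\mathcal{E}_{K_0}$, but $(2H-E_1-E_2-E_3)\cdot(H-E_1-E_2-E_3)=-1$), and $H+E_1+E_2\in U_3\setminus U_2$ (here $ind=4$ and $\mathcal{E}_A=\varnothing$ so $Gr=Gr'\neq 0$, but $(H+E_1+E_2)^2=-1$). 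Your instinct to look at $H+aE_i$ was on target for the second of these: already $H+E_1$ on $\CC\PP^2\#\overline{\CC\PP}^2$ lies in $U_3\setminus U_2$ by the same reasoning. What genuinely requires a special $J$ is the witness for $U_2\setminus U_1$, since $U_1$ is the only set in the hierarchy that depends on $J$; your sketch correctly identifies this as the nontrivial case but does not supply the example.
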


\begin{proof}
First of all, observe that if $J$ comes from the complex blowup of three points on the same line in $\CC\PP^2$, then there is a curve in class $H-E_1-E_2-E_3$ and thus $2H-E_1-E_2-E_3\in U_2\setminus U_1$; $H+E_1+E_2\in U_3\setminus U_2$; $H+2E_1\in U_4\setminus U_3$. Then we explain each inclusion relation one by one:
    \begin{itemize}
        \item ($U_1\subseteq U_2$) If $A$ is $J$-nef, then by positivity of intersection, we have $A\cdot E\geq 0$ for all $E\in\mathcal{E}_{K_0}$ and $A\cdot H\geq 0$. The condition $ind(A)\geq 2k\geq 2$ also forces $A\cdot H>0$. Therefore by the discussion of section \ref{section:SWGrGr'} we know that $Gr(A)=Gr'(A)=SW_{\omega,-}(A)\neq 0$. To see $A^2\geq 0$, note that $Gr(A)\neq 0$ means for some generic almost complex structure (maybe not the same as $J$), there is some subvariety $\{(C_i,m_i)\}$ in Taubes' moduli space representing the class $A$. All $C_i$'s are disjoint and the only components with negative self-intersection must be embedded exceptional spheres whose classes live in $\mathcal{E}_{K_0}$. However, since we already know the class $A$ pairs non-negatively with any element in $\mathcal{E}_{K_0}$, there couldn't be any $C_i$ with negative self-intersection. Therefore $A^2=\sum_im_i^2[C_i]^2\geq 0$. 
        \item ($U_2\subseteq U_3$) The conditions $A\cdot E\geq 0$ for all $E\in\mathcal{E}_{K_0}$ and $A\cdot H>0$ guarantee $Gr(A)=Gr'(A)=SW_{\omega,-}(A)\neq 0$.
        \item ($U_3\subseteq U_4$) The element $\{(C_i,m_i)\}$ in Taubes' moduli space has the property that if $C_i^2<0$ then $C_i^2=-1$ and $m_i=1$. By positivity of intersection, $A\cdot E\geq -1$ for all $E\in\mathcal{E}_{K_0}$. So $Gr'(A)=Gr(A)\neq 0$.
        \item ($U_4\subseteq U_5$) Positivity of intersection.
        \item ($U_5\subseteq U_6$) By lemma \ref{lem:criterion}.
        \item ($U_6=U_7$) By lemma \ref{lem:eff=sw}.
        \item ($U_6\subseteq U_4$) By lemma \ref{lem:Gr'defined} and the identification in \cite{LLSWGr}.
    \end{itemize}
\end{proof}


\subsection{Proof of equivalence}
We fix a positive integer $k$ and keep the same settings as the last section. The aim of this section is to show:

\begin{prop}\label{prop:equivalentdef}
We have equal values 
 $$\inf_{A\in U_1}\omega(A)=\inf_{A\in U_2}\omega(A)=\cdots=\inf_{A\in U_7}\omega(A).$$ 
\end{prop}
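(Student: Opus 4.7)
My plan is as follows. The previous proposition provides the chain $U_1 \subseteq U_2 \subseteq \cdots \subseteq U_7$, which automatically yields $\inf_{U_1}\omega \geq \inf_{U_2}\omega \geq \cdots \geq \inf_{U_7}\omega$. Consequently the entire content of the proposition reduces to the reverse inequality $\inf_{U_1}\omega \leq \inf_{U_7}\omega$, and it suffices to produce, for every $A \in U_7$, some $A' \in U_1$ with $\omega(A') \leq \omega(A)$.

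The construction will be an iterative ``subtract an exceptional obstruction'' procedure. Starting from $A \in U_7$, if $A$ is $J$-nef we are done. Otherwise there is a $J$-effective class $B$ with $A \cdot B < 0$; representing $B$ by a $J$-holomorphic subvariety, at least one irreducible component $C$ must satisfy $A \cdot [C] < 0$. By positivity of intersection, $C$ then appears in every subvariety representative of $A$, so we may write $A = m[C] + R$ with $R$ containing no component equal to $C$. The inequality $R \cdot [C] \geq 0$ (again by positivity of intersection) forces $[C]^2 < 0$, and the standard fact that on a rational surface any irreducible $J$-holomorphic curve with negative self-intersection is a smooth exceptional sphere places $[C] \in \mathcal{E}_{K_0}$. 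I then replace $A$ by $A - [C]$ and repeat.

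Two verifications at each step are routine. First, $A - [C]$ is again $J$-effective, since we can simply decrement the multiplicity of $C$ in the subvariety representing $A$. Second, using $[C]^2 = K_0 \cdot [C] = -1$ a direct computation gives $ind(A-[C]) = ind(A) - 2A \cdot [C] - 2$, which is $\geq ind(A) \geq 2k$ since $A \cdot [C] \leq -1$. And of course $\omega(A-[C]) = \omega(A) - \omega([C]) < \omega(A)$.

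The step I expect to be the main obstacle is termination of this iteration. Each step decreases $\omega(A)$ by $\omega([C]) > 0$, and $\omega(A) \geq 0$ is maintained since the intermediate classes remain $J$-effective and $J$ is tamed by $\omega$. It therefore suffices to exhibit a uniform positive lower bound $\epsilon > 0$ on $\omega(E)$ over all $E \in \mathcal{E}_{K_0}$ that can actually arise as obstructions, and this should follow from Gromov compactness: for any $M > 0$ only finitely many classes admit $J$-holomorphic representatives with $\omega$-area at most $M$, so the set of $\omega$-values attained on $\mathcal{E}_{K_0}$ is a discrete subset of $\RR_{>0}$ with a positive minimum. Once termination is secured, the process halts at a class $A_N$ which, having no exceptional obstruction left, must be $J$-nef; it is nonzero thanks to $ind(A_N) \geq 2k \geq 2$, so $A_N \in U_1$ and $\omega(A_N) \leq \omega(A)$, giving the claim.
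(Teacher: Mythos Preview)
Your overall strategy is sound and close to the paper's, but there is a genuine error in the key step. You assert that ``on a rational surface any irreducible $J$-holomorphic curve with negative self-intersection is a smooth exceptional sphere,'' and then use $[C]^2 = K_0\cdot[C] = -1$ in the index computation. This is false for an arbitrary tamed $J$: the paper itself gives the example (in the hierarchy proposition) of $J$ coming from blowing up three collinear points in $\CC\PP^2$, which produces an irreducible curve in the class $H-E_1-E_2-E_3$ with self-intersection $-2$. For generic $J$ your claim holds, but the statement concerns all tamed $J$.

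The fix is straightforward and is exactly what the paper does: replace the equality $[C]^2+K_0\cdot[C]=-2$ by the adjunction \emph{inequality} $[C]^2+K_0\cdot[C]\geq -2$, valid for any irreducible $J$-holomorphic curve. Then
\[
ind(A-[C]) = ind(A) - 2A\cdot[C] + \bigl([C]^2+K_0\cdot[C]\bigr) \geq ind(A) + 2 - 2 = ind(A),
\]
and your iteration goes through. Your termination argument also needs adjustment, since you phrased it for classes in $\mathcal{E}_{K_0}$; but in fact each $C_i$ must occur among the finitely many components of your initial subvariety $\Theta_0$ (it lies in \emph{every} representative of $A_i$, hence in the decremented $\Theta_0$), so the total multiplicity drops at each step and the process halts without any appeal to exceptional classes.

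Comparing with the paper: rather than iterating from an arbitrary $A\in U_7$, the paper first invokes Gromov compactness (Lemma~\ref{lem:min}) to produce an actual minimizer $A_{min}\in U_7$, and then runs your subtraction argument \emph{once} to derive a contradiction if $A_{min}$ is not $J$-nef. This sidesteps the termination issue entirely and is a bit cleaner, but the underlying mechanism is the same.
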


Firstly note that if there is a sequence of $J$-holomorphic subvarieties $\Theta_i$ whose $\omega$-areas converge to the infimum over $U_7$, then by Gromov compactness theorem, after passing to a subsequence it will have a limit $\Theta$. Note that $[\Theta_i]=[\Theta]$ for large $i$ so that $[\Theta]\in U_7$. To summarize, we then get

\begin{lemma}\label{lem:min}
    There always exists $A_{min}\in U_7$ such that $\omega (A_{min})$ attains the infimum over $U_7$. 
\end{lemma}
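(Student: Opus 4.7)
The plan is to realize the infimum by a direct Gromov compactness argument, exactly along the lines sketched before the lemma. First, I choose a minimizing sequence $A_i \in U_7$ with $\omega(A_i) \searrow \inf_{A \in U_7} \omega(A)$. Note this infimum is finite and nonnegative: every $J$-holomorphic subvariety has positive $\omega$-area because $\omega$ tames $J$, while $U_7$ is nonempty (for instance $H \in U_7$ by Lemma \ref{lem:eff=sw} once $k$ is in a range where $ind(H) \geq 2k$, and one can always exhibit $J$-effective classes of sufficiently large index on a rational surface). By $J$-effectiveness, each $A_i$ is realized by some $J$-holomorphic subvariety $\Theta_i = \{(C_{i,j}, m_{i,j})\}$ with $[\Theta_i] = A_i$.

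Next, I apply Gromov's compactness theorem for $J$-holomorphic subvarieties on the closed $4$-manifold $X$: since the symplectic areas $\omega([\Theta_i]) = \omega(A_i)$ are uniformly bounded, a subsequence (still written $\Theta_i$) converges in the Gromov-Hausdorff topology of $\mathcal{M}_\bullet$ to a $J$-holomorphic subvariety $\Theta$. A standard feature of this convergence is that homology classes converge in $H_2(X;\mathbb{R})$, so $[\Theta_i] \to [\Theta]$; since the $[\Theta_i]$ are integral classes in the finitely generated free abelian group $H_2(X;\mathbb{Z})$, which sits discretely inside $H_2(X;\mathbb{R})$, the sequence must be eventually constant, i.e.\ $[\Theta_i] = [\Theta]$ for all sufficiently large $i$.

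Setting $A_{min} := [\Theta]$, this common value satisfies $\omega(A_{min}) = \lim_i \omega(A_i) = \inf_{A \in U_7}\omega(A)$. Moreover $A_{min} \in U_7$: it is $J$-effective by the representative $\Theta$, and $ind(A_{min}) = ind(A_i) \geq 2k$ for large $i$ because these are equal integers. The only nontrivial ingredient is the Gromov compactness statement for (possibly reducible, multiply covered) $J$-holomorphic subvarieties in the tamed almost complex setting, together with the stability of the homology class under Gromov-Hausdorff limits; both are by now standard in the $4$-manifold literature and can be cited from \cite{Taubes5} or \cite{Weiyi21}. This is also where the main subtlety lies, as one must know that no area, and hence no class, is lost in the limit.
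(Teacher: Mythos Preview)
Your argument is correct and follows essentially the same route as the paper: take a minimizing sequence of $J$-holomorphic subvarieties with bounded area, pass to a Gromov limit, and use discreteness of $H_2(X;\ZZ)$ to conclude the homology class stabilizes so that the limit class lies in $U_7$ and realizes the infimum. You have simply fleshed out the details the paper leaves implicit.
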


\begin{proof}[Proof of proposition \ref{prop:equivalentdef}]
    By the above lemma, it suffices to show $A_{min}$ is $J$-nef. Since $A_{min}\in U_7$ has effective representation $\{(C_i,m_i)\}$, if $e$ is an irreducible curve such that $[e]\cdot A_{min}\leq -1$, $e$ must be equal to some $C_i$ by positivity of intersection. Now let's consider the class $A_{min}-[e]$, which is also effective. Its index satisfies
    $$(A_{min}-[e])^2+c_1(A_{min}-[e])=ind(A_{min})+[e]^2+K_0\cdot [e]-2A_{min}\cdot [e]\geq ind(A_{min})$$ 
    by assumption and adjunction inequality. But then $A_{min}-[e]$ will have smaller area which is a contradiction.
\end{proof}

Such an argument of peeling off process also appears in the proof of \cite[theorem 3.4]{rationalobs}, which uses a technical result from algebraic geometry. However, working in the almost complex category rather than algebraic category, the proof can be as simple as above.

Proposition \ref{prop:equivalentdef} gives us seven different perspectives for doing optimizations in order to define the tamed capacities for rational manifolds. We will use $U_5$ in the next section to prove our main theorem. In particular, we immediately see 
\begin{corollary}\label{cor:indepofj}
    When $X$ is a rational manifold, the $k$-th tamed capacity $f_k(X,\omega,J)$ only depends on the cohomology class $[\omega]$.
\end{corollary}

Therefore, we can omit the almost complex structure $J$ and just write $f_k(X,[\omega])$ from now on if $X$ is a rational manifold.

\begin{rmk}\label{rmk:exampleT4}
One can expect the $J$-independence still holds for ruled manifolds since it's shown in \cite{Sungruled} that ruled manifolds have the similar curvature property as lemma \ref{lem:Hitchin}. However, this is not the case for other manifolds. A quick example is by taking $X$ to be $K3$ or $T^4$ endowed with a K\"{a}hler pair $(\omega,J)$ from a polarization. One can always deform the complex structure into another $J'$ whose Picard number is $0$ by choosing a generic point in the period domain. Since the tame condition is an open condition, $(\omega,J')$ is still a tamed pair if $J'$ is sufficiently close to $J$. Then one can see $f_k(X,\omega,J)$ is always finite but $f_k(X,\omega,J')=\infty$ since the infimum is taken over an empty set. See more discussions in section \ref{section:comparison}.
\end{rmk}

\section{Tropical property}
From now on, $X$ will always be a rational manifold. We want to treat $f_k(X,[\omega])$ as a function with variable $[\omega]\in H^2(X;\RR)$. The first thing is to figure out the suitable domain for this function.

\subsection{Various kinds of symplectic cones}\label{section:symplecticcone}
Assume $X=\CC\PP^2\#n\overline{\CC\PP}^2$ with standard basis $\{H,E_1,\cdots,E_n\}\subset H_2(X;\ZZ)$. We can naturally identify $\RR^{n+1}$ with $H^2(X;\RR)$ using this basis by associating $(x_0,x_1,\cdots,x_n)$ to $x_0PD(H)-\sum_{i=1}^nx_iPD(E_i)$. Define the {\bf symplectic cone} to be
$$\mathcal{C}:=\{[\omega]\,|\,\omega\text{ is a symplectic form on }X\}\subset \RR^{n+1}.$$
By \cite{LiLiu01}, up to a diffeomorphism, any symplectic form on $X$ has the standard canonical class $PD(-3H+E_1+\cdots+E_n)$, which we denote by $K_0$ so as to distinguish it from other possible symplectic canonical classes. This leads to the definition of the {\bf symplectic $K_0$-cone}
$$\mathcal{C}_{K_0}:=\{[\omega]\,|\,[\omega]\in\mathcal{C},K_{\omega}=K_0\}\subset \RR^{n+1}.$$

Let $D_{K_0}\subseteq Aut(H_2(X;\RR))$ be the group of homological actions by diffeomorphisms preserving the canonical class $K_0$. Note that $D_{K_0}$ naturally acts on $\mathcal{C}_{K_0}$. This action has a fundamental domain $\mathcal{P}$, which is called the {\bf reduced cone}, given by
\begin{itemize}
    \item $0<x_1<x_0$ when $n=1$;
    \item $0<x_2\leq x_1$, $x_1+x_2<x_0$ when $n=2$;
    \item $0<x_n\leq\cdots\leq x_1$, $x_1+x_2+x_3\leq x_0$ and $\sum_{i=1}^nx_i^2<x_0^2$ when $n\geq 3$.
\end{itemize}
Note that the condition $\sum_{i=1}^nx_i^2<x_0^2$ is redundant if $3\leq n\leq 8$ by Cauchy inequality. For more details, we refer to \cite{ALLP-stability} and the reference therein. The upshot is the following result due to Karshon and Kessler.

\begin{theorem}[\cite{KK17}]
   There is a natural bijection
$$\{\text{symplectic forms on }X\}/\text{symplectomorphisms}\xleftrightarrow[]{1-1} \mathcal{P}.$$ 
\end{theorem}

 If we want to also ignore the scalar multiplications, we can consider the {\bf normalized reduced region}
$$\tilde{\mathcal{P}}:=\{[\omega]\in\mathcal{P}\,|\,\omega(H)=1\}.$$

When $n\leq 9$, $\tilde{\mathcal{P}}$ has the shape of a polytope with some parts of the boundary points removed; when $n\geq 10$, $\tilde{\mathcal{P}}$ looks like a polytope with a chopped corner determined by a quadratic equation (see figure \ref{fig:cone2}).

Finally, we introduce the {\bf $c_1$-positive cone} 
$$\mathcal{P}^{c_1>0}:=\{[\omega]\in\mathcal{P}\,|\,\omega(3H-E_1-\cdots -E_n)>0\},$$
and its normalized version {\bf $c_1$-positive region}
$$\tilde{\mathcal{P}}^{c_1>0}:=\{[\omega]\in\mathcal{P}^{c_1>0}\,|\,\omega(H)=1\}.$$
Note that when $n\leq 9$, $\tilde{\mathcal{P}}^{c_1>0}=\tilde{\mathcal{P}}$; when $n\geq 10$, $\tilde{\mathcal{P}}^{c_1>0}\subsetneq\tilde{\mathcal{P}}$ and a nice property we will rely on is 
\begin{lemma}[\cite{Enumerate} lemma 3.1]
    $\tilde{\mathcal{P}}^{c_1>0}$ is a polyhedral region.
\end{lemma}
We describe the pattern of its vertices for the preparation of next section. To avoid tautology, we drop the first constant coordinate $x_0=1$ and only write $(x_1,\cdots,x_n)$ below. 
\begin{itemize}
    \item $n\leq 9$: $P_1=(0,0,\cdots,0),P_2=(1,0,\cdots,0),P_3=(\frac{1}{2},\frac{1}{2},0\cdots,0)$ and $P_i=(\underbrace{\frac{1}{3},\cdots,\frac{1}{3}}_{i-1},0,\cdots,0)$ for $4\leq i\leq n+1$;
    \item $n\geq 10$: $P_1,\cdots,P_{10}$ as $n\leq 9$, and $Q_{ij}:=t_{ij}P_i+(1-t_{ij})P_{j+1}$ for each $1\leq i\leq 9$, $10\leq j\leq n$, where $t_{ij}$ is the unique number such that the sum of all entries of $Q_{ij}$ is $3$. The precise values for these $t_{ij}$ are spelled out in \cite[section 2.2]{llwtorelli}. But for the purpose of this paper, we only need to remember $0<t_{ij}<1$.
    
\end{itemize}

We prepare the following cartoons (figures \ref{fig:cone1},\ref{fig:cone2}) to help readers figure out the concepts introduced in this section.
\begin{figure}[H]
		\includegraphics*[width=\linewidth]{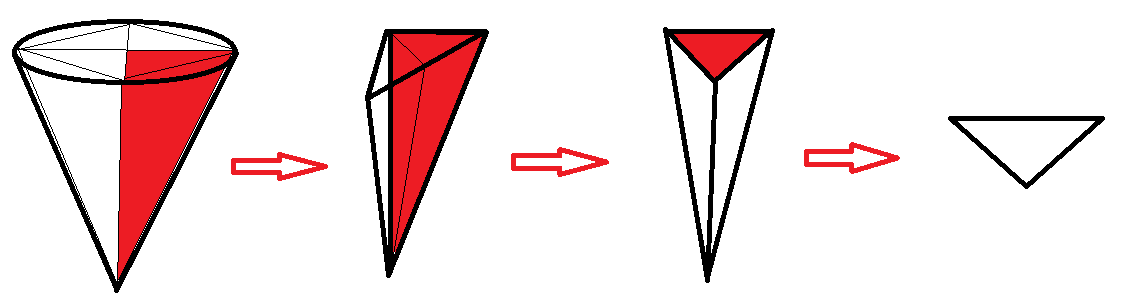}
 
		\caption{From left to right, these are the symplectic cone $\mathcal{C}$, symplectic $K_0$-cone $\mathcal{C}_{K_0}$, reduced cone $\mathcal{P}$ and normalized reduced region $\tilde{\mathcal{P}}$ for $\CC\PP^2\#2\overline{\CC\PP}^2$. Note that $\mathcal{C}$ is the region $\{x_1^2+x_2^2<x_0^2\}\subseteq \RR^3$ with infinitely many walls removed (we only draw $6$ of them, see also \cite[figure 13.4]{MSbook}). These walls correspond to the condition $\omega(E)=0$ for some exceptional class $E$. By adding the restrictions $x_1>0,x_2>0,x_1+x_2<1$ we get $\mathcal{C}_{K_0}$. Now the group $D_{K_0}$ will act on $\mathcal{C}_{K_0}$. In two points blowup case, this action is simply the reflection along the plane $\{x_1=x_2\}$. By further adding the restriction $x_2\leq x_1$, we get a fundamental domain under this action, which is $\mathcal{P}$. Finally, we take a cross section by adding the condition $x_0=1$ to obtain $\tilde{\mathcal{P}}$. \label{fig:cone1}}
	\end{figure}

\begin{figure}[h]
		\centering\includegraphics*[height=5cm, width=8cm]{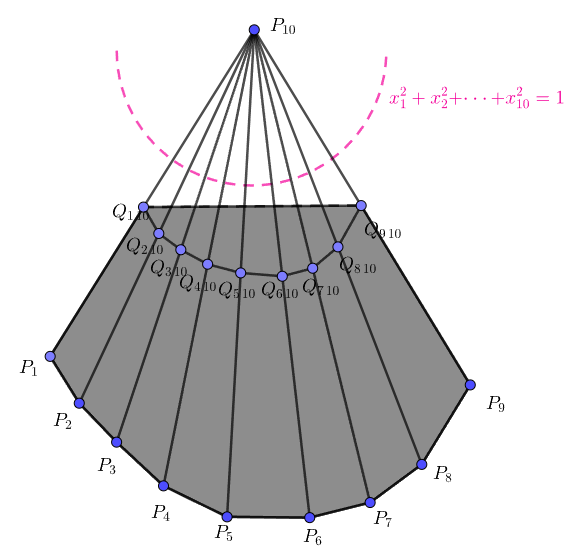}
 
		\caption{The imaginary picture for $\tilde{\mathcal{P}}$ of $\CC\PP^2\#10\overline{\CC\PP}^2$. The $c_1$-positive region $\tilde{\mathcal{P}}^{c_1>0}$ is represented by the shaded region in the figure where we mark all the vertices $P_i$ and $Q_{ij}$. Note that when the number of blowup points is greater than $9$, $\tilde{\mathcal{P}}$ will have part of the boundary defined by a quadratic equation. However, $\tilde{\mathcal{P}}^{c_1>0}$ is still a polytope.  \label{fig:cone2}}
	\end{figure}
 
\subsection{Finiteness}
Now we can view the $k$-th tamed capacity $f_k$ as a function over $\mathcal{C}$ or some of its subsets described in the previous section. The aim of this section is to prove the following:
\begin{theorem}\label{thm:main}
    $f_k$ is a tropical polynomial over the reduced $c_1$-positive cone $\mathcal{P}^{c_1>0}$; when $n\leq 8$, the tropical property still holds over the larger symplectic $K_0$-cone $\mathcal{C}_{K_0}$.
\end{theorem}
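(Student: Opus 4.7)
My plan is to reformulate the problem via Proposition~\ref{prop:equivalentdef} and Lemma~\ref{lem:min}, which together give
\[
f_k([\omega])=\min_{A\in U_2}\omega(A),
\]
with $U_2\subseteq H_2(X;\ZZ)$ a subset independent of $[\omega]$. The theorem then reduces to showing that the set $S\subseteq U_2$ of classes which realize this minimum at some $[\omega]\in\mathcal{P}^{c_1>0}$ is finite; once this is done, $f_k=\min_{A\in S}\omega(A)$ is a tropical polynomial by definition.

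To bound $|S|$ I would first produce a uniform upper bound. The class $m_0H$ lies in $U_2$ for the smallest positive integer $m_0$ with $m_0^2+3m_0\geq 2k$, and on the normalized region $\tilde{\mathcal{P}}^{c_1>0}$ this gives $f_k\leq m_0$. Writing $A=aH-\sum b_iE_i$ for any candidate minimizer and using that $\omega\mapsto\omega(A)$ is a linear function attaining its minimum on the compact polytope $\overline{\tilde{\mathcal{P}}^{c_1>0}}$ at some vertex $P$, we obtain the vertex inequality $P(A)=a-\sum b_ix_{P,i}\leq m_0$. This is to be combined with the intrinsic constraints on $U_2$: $b_i\geq 0$ from $A\cdot E_i\geq 0$, $\sum b_i^2\leq a^2$ from $A^2\geq 0$, and Cremona-style inequalities $\sum_j m_jb_j\leq da$ coming from $A\cdot(dH-\sum m_jE_j)\geq 0$ for each exceptional class in $\mathcal{E}_{K_0}$ (in particular $b_i+b_j\leq a$ for all $i\neq j$, and $b_{i_1}+\cdots+b_{i_5}\leq 2a$ on any five-subset when $n\geq 5$).

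The main obstacle is that a naive bound on $\{A\in U_2:P(A)\leq m_0\}$ at an individual vertex is too weak: at $P_2=(1,0,\ldots,0)$, the family $aH-aE_1$ satisfies $P_2(A)=0$ for every $a\geq k$ and lies in $U_2$, yet all of these are dominated on $\tilde{\mathcal{P}}^{c_1>0}$ by the single class $k(H-E_1)$. The genuine argument must therefore track only non-dominated minimizers by combining the Cauchy--Schwarz bound $\sum b_ix_{P,i}\leq a\sqrt{\sum x_{P,i}^2}$ (which yields $a\leq m_0/(1-\sqrt{\sum x_{P,i}^2})$ whenever $\sum x_{P,i}^2<1$ at $P$) with a systematic exclusion of dominated families. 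For $n\leq 8$ every vertex of $\overline{\tilde{\mathcal{P}}}$ satisfies $\sum x_{P,i}^2<1$, and $c_1^2=9-n>0$ places $c_1$ in the positive forward cone so $\mathcal{P}^{c_1>0}=\mathcal{P}$, closing the argument over $\tilde{\mathcal{P}}^{c_1>0}$. For $n\geq 9$ the vertex $P_{10}=(\tfrac13,\ldots,\tfrac13)$ satisfies $\sum x_{P_{10},i}^2=1$ and sits on the excluded boundary $c_1\cdot\omega=0$; here I would exploit the strict condition $c_1\cdot\omega>0$ over $\mathcal{P}^{c_1>0}$ to keep the relevant minimizers bounded away from this degenerate vertex, together with the rich (infinite) family of exceptional curve constraints in $\mathcal{E}_{K_0}$ (including higher-degree classes $dH-\sum m_iE_i$ with $d^2-\sum m_i^2=-1$, $\sum m_i=3d-1$) to sharpen the partial-sum bounds on the $b_i$'s and recover finiteness.

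For the extension to $\mathcal{C}_{K_0}$ when $n\leq 8$: in this del Pezzo regime $D_{K_0}$ is the finite Weyl group of the root system in $K_0^\perp$ (of types $A_1$, $A_1{\oplus}A_1$, $A_2{\oplus}A_1$, $A_4$, $D_5$, $E_6$, $E_7$, $E_8$ for $n=1,\ldots,8$), and $\mathcal{C}_{K_0}=\bigcup_{g\in D_{K_0}}g\cdot\mathcal{P}$ is a finite union of translates of the reduced cone. Since $f_k$ is $D_{K_0}$-invariant, a finite minimizer set $S\subset U_2$ on $\mathcal{P}$ yields the finite total set $\bigcup_{g\in D_{K_0}}g\cdot S$, realizing $f_k$ as a tropical polynomial on all of $\mathcal{C}_{K_0}$.
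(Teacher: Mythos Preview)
Your overall framework is sound: reduce via Proposition~\ref{prop:equivalentdef} to minimization over a $J$-independent set, argue that only finitely many classes are needed, and for $n\leq 8$ extend to $\mathcal{C}_{K_0}$ using that $D_{K_0}$ is a finite Weyl group. The last step matches the paper's own argument (Lemma~\ref{lemma:U5preserve}(2)). One minor correction: you phrase the target as ``the set $S$ of all classes realizing the minimum at some $[\omega]$ is finite,'' which is stronger than needed and not obviously true; what you actually want (and what the paper proves via Lemma~\ref{lem:stabilize} and Proposition~\ref{prop:finite}) is that the set of elements \emph{minimal} under the domination order $A\geq B\Leftrightarrow\omega(A)\geq\omega(B)$ for all $[\omega]\in\mathcal{P}^{c_1>0}$ is finite.

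The substantive gap is the one you flag yourself and do not close. Your Cauchy--Schwarz bound $a\leq m_0/(1-\sqrt{\sum x_{P,i}^2})$ is useless at vertices with $\sum x_{P,i}^2=1$: this happens at $P_2=(1,0,\ldots,0)$ for every $n\geq 1$, and at $P_{10}$ and all the $Q_{ij}$ for $n\geq 9$. A minimal $A$ could satisfy $P(A)\leq m_0$ \emph{only} at such a bad vertex, so those cases carry the entire weight of the argument. For $P_2$ you exhibit one dominated family $a(H-E_1)$, but give no proof that every reduced $A$ with $a-b_1\leq m_0$ and $a$ large is dominated. For $n\geq 9$, ``exploit the strict condition $c_1\cdot\omega>0$'' and ``use the rich infinite family of exceptional curve constraints'' are hopes, not arguments: the exceptional constraints you list cut out part of the nef cone, not $U_2$, and you supply no mechanism to turn them into a bound on $a$.

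The paper's approach is different in exactly this place. Working over $U_5$, it first shows (Lemma~\ref{lem:reduced}) that any minimal element is reduced \emph{and} satisfies $3a-\sum_{i=1}^n b_i\geq 1$: if $3a-\sum b_i\leq 0$ then $A-(-K_0)$ has index $\geq ind(A)$ and strictly smaller area everywhere on $\mathcal{P}^{c_1>0}$, so $A$ would be dominated. This integral lower bound on $c_1\cdot A$ is precisely the missing ingredient replacing your vertex bound at $P_{10}$. Lemma~\ref{lem:c1bound} then tests $A$ against $k(H-E_1)$ at every vertex (handling the $Q_{jl}$ via Chebyshev's sum inequality) to extract the complementary upper bound $3a-\sum_{i\leq\min(n,9)}b_i\leq C_{n,k}$. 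Finally Proposition~\ref{prop:finite} writes $A=(cH-\sum_{i\leq 8}d_iE_i)+\sum_{i\geq 9}b_i(3H-\sum_{j\leq 8}E_j-E_i)$ and uses the two bounds together to force $|c|$, $|d_i|$, and $\sum_{i\geq 9}b_i$ all bounded, hence $a$ bounded.
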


By proposition \ref{prop:equivalentdef}, we will consider the optimization problem over the set 
$$U_5=\{A\in H_2(X;\ZZ)\,|\,ind(A)\geq 2k,A\cdot H>0\}$$
which doesn't depend on either $\omega$ or $J$ when we fix the canonical class $K_0$. Firstly, we require the following fact:

\begin{lemma}\label{lemma:U5preserve}
    (1) Any $\phi\in D_{K_0}$ satisfies $\phi(U_5)=U_5$.
    
    (2) Assume $n\leq 8$, then for any $A\in U_5$, the orbit set $\{\phi(A)\,|\,\phi\in D_{K_0}\}\subset U_5$ is finite.\end{lemma}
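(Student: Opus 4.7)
The plan is to handle the two parts with separate ingredients. Part (1) uses the equivalence $U_5=U_7$ from the preceding subsection together with the naturality of $J$-holomorphic subvarieties under diffeomorphisms. Part (2) uses a lattice-theoretic finiteness statement that is special to $n\leq 8$.

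For part (1), I first check that $ind$ is $\phi$-invariant. Since $\phi=\Phi_{*}$ is induced by a diffeomorphism $\Phi$ that fixes $K_0$, it preserves the intersection form on $H_2(X;\ZZ)$, so $(\phi A)^2=A^2$ and $K_0\cdot\phi A=\phi^{-1}K_0\cdot A=K_0\cdot A$, which gives $ind(\phi A)=ind(A)$. The positivity $A\cdot H>0$ is not obviously $\phi$-invariant because $\phi$ need not fix $H$, so I would instead use the identity $U_5=U_7$ to rephrase membership in $U_5$ as ``$ind(A)\geq 2k$ and $A$ is $J$-effective for some (equivalently any) tame pair $(\omega,J)$ with $K_\omega=K_0$''. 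The pushed-forward pair $(\Phi_*\omega,\Phi_*J)$ is again tame with canonical class $\Phi_*K_\omega=K_0$, and any $J$-holomorphic subvariety $\{(C_i,m_i)\}$ representing $A$ is carried by $\Phi$ to a $\Phi_*J$-holomorphic subvariety $\{(\Phi(C_i),m_i)\}$ representing $\Phi_*A=\phi(A)$. So $\phi(A)$ is $\Phi_*J$-effective, and the $J$-independence of $U_7$ then places $\phi(A)$ in $U_5$. Applying the same argument to $\phi^{-1}$ upgrades the inclusion to an equality.

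For part (2), I claim the stronger fact that $D_{K_0}$ itself is a finite group when $n\leq 8$, from which finiteness of every orbit is immediate. Under this hypothesis one has $K_0^2=9-n>0$, so $K_0$ is a timelike vector for the intersection form on $H_2(X;\RR)$, whose signature is $(1,n)$. Consequently its orthogonal complement $L:=K_0^{\perp}\cap H_2(X;\ZZ)$ is an integral lattice of rank $n$ with negative definite intersection form. Any $\phi\in D_{K_0}$ preserves the form and fixes $K_0$, so it sends $L$ into itself and is completely determined by the identity action on $\RR K_0$ together with its restriction $\phi|_L$. Because the isometry group of a negative definite integral lattice of finite rank is finite, $D_{K_0}$ embeds into this finite group and is finite. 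The orbit of any $A\in U_5$ is therefore finite, and remains inside $U_5$ by part (1).

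The main subtlety will be in part (1), where one must carefully invoke the $(\omega,J)$-independence of $U_7$ established in the preceding proposition and trace through that $\Phi_*J$-effectiveness of $\phi(A)$ suffices (via this independence) to conclude $\phi(A)\in U_5$ no matter which original pair $(\omega,J)$ one started from. For part (2) the bound $n\leq 8$ is sharp: at $n=9$ one has $K_0^2=0$ so $L$ becomes only negative semi-definite with $K_0$ itself a null direction, and for $n\geq 10$ the lattice $L$ is already indefinite with infinite isometry group. This is ultimately the same phenomenon that forces the $c_1$-nef hypothesis in Theorem \ref{thm:main}, as flagged in Remark \ref{rmk:nonc1nef}.
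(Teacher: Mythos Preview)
Your proof is correct, and both parts take a genuinely different route from the paper.

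For part (1), the paper works directly with the generators of $D_{K_0}$: reflections in $E_i-E_j$ obviously preserve the $H$-coefficient, and for the Cremona reflection in $H-E_1-E_2-E_3$ an explicit computation shows that if the new $H$-coefficient $2a-b_1-b_2-b_3$ were nonpositive then $ind(A)<2$, contradicting $ind(A)\geq 2k$. Your approach instead leverages the equality $U_5=U_7$ already established in the hierarchy proposition, transporting $J$-effectiveness through the diffeomorphism $\Phi$ realizing $\phi$. This is clean and conceptual, and avoids any case analysis on generators; it does however depend on the prior proposition, whereas the paper's argument is self-contained at the level of elementary inequalities.

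For part (2), the paper again computes directly: fixing $K_0\cdot\phi(A)$ and using Cauchy's inequality $\sum d_i^2\geq\frac{1}{n}(\sum d_i)^2$ with $n\leq 8$ yields a quadratic upper bound on the $H$-coefficient $c$ of $\phi(A)$, hence only finitely many possibilities. You instead prove the stronger statement that $D_{K_0}$ itself is finite, via the classical fact that $K_0^2=9-n>0$ forces $K_0^{\perp}$ to be a negative definite integral lattice whose isometry group is finite. Your argument is more conceptual and explains \emph{why} $n\leq 8$ is the correct threshold (at $n=9$ the lattice becomes degenerate), while the paper's estimate gives an explicit bound on the orbit but obscures the lattice-theoretic reason behind it.
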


\begin{proof}
    (1) We only need to prove $\phi(U_5)\subseteq U_5$ and then apply it to $\phi^{-1}$. By \cite[Proposition 4.7]{LiWuLag} (see also \cite[Proposition 1.2.12]{Ellipsoidannals}), $\phi$ is generated by reflections along $E_i-E_j$ and $H-E_1-E_2-E_3$. For $A=aH-\sum_{i=1}^nb_iE_i\in U_5$, the reflections along $E_i-E_j$ obviously will stay in $U_5$ since they merely switch $b_i$ and $b_j$. For reflection along $H-E_1-E_2-E_3$, it will turn $A$ into $$(2a-b_1-b_2-b_3)H-(a-b_2-b_3)E_1-(a-b_1-b_3)E_2-(a-b_1-b_2)E_3-\sum_{i\geq 4} b_iE_i.$$ If $2a-b_1-b_2-b_3\leq 0$, then $b_1^2+b_2^2+b_3^2\geq \frac{4}{3}a^2$ and thus $$ind(A)=a^2+3a-\sum_{i=1}^n(b_i^2+b_i)\leq -\frac{1}{3}a^2+a<2,$$ which is a contradiction. So it also preserves $U_5$.

    (2) Note that $K_0\cdot \phi(A)=K_0\cdot A$. Therefore if $\phi(A)=cH-\sum _{i=1}^nd_iE_i$, we have
    $$ind(\phi(A))=c^2-\sum_{i=1}^nd_i^2-K_0\cdot A\leq c^2-\frac{1}{8}(\sum_{i=1}^nd_i)^2-K_0\cdot A$$
    $$=c^2-\frac{1}{8}(3c+K_0\cdot A)^2-K_0\cdot A=-\frac{1}{8}c^2-\frac{3K_0\cdot A}{4}c-((K_0\cdot A)^2+K_0\cdot A).$$
    We see that for $ind(\phi(A))\geq 2k$, $c$ must be bounded and thus the orbit is finite.
\end{proof}

To prove Theorem \ref{thm:main}, we now introduce a partial order $\geq$ on the set $U_5$. For $A,B\in U_5$ we say $A\geq B$ if $\omega(A)\geq \omega(B)$ for all $[\omega]$ in the $c_1$-positive cone $\mathcal{P}^{c_1>0}$. We say $A\in U_5$ is {\bf minimal} if there doesn't exist any other $B\in U_5$ such that $A\geq B$. A class of the form $aH-\sum_{i=1}^nb_iE_i$ with $a>0,b_1\geq b_2\geq\cdots b_n\geq 0,a\geq b_1+b_2+b_3$ is called {\bf reduced}.

\begin{lemma}\label{lem:stabilize}
 Every chain $A_1\geq A_2\geq\cdots$ in $(U_5,\geq)$ will stabilize. 
 \end{lemma}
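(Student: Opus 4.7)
The plan is to turn the partial order $\geq$ on $U_5$ into a finite list of linear inequalities by testing at the vertices of $\tilde{\mathcal{P}}^{c_1>0}$, and then use the index constraint $ind\geq 2k$ to trap the tail of any descending chain inside a finite set.

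First, since $[\omega]\mapsto\omega(A-B)$ is linear (hence continuous) in $[\omega]$, the relation $A\geq B$ holds iff $\omega(A-B)\geq 0$ at every vertex of the closure of the normalized region $\tilde{\mathcal{P}}^{c_1>0}$, namely at the explicit points $P_1,\ldots,P_{n+1}$ (together with the $Q_{ij}$'s when $n\geq 10$) described in Section \ref{section:symplecticcone}. This replaces $\geq$ by a finite system of linear inequalities in the coordinates of a class.

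Next, write $A_i=a_iH-\sum_{j=1}^n b_{i,j}E_j$. Evaluating at the vertex $P_1=(0,\ldots,0)$, i.e.\ $[\omega]=PD(H)$, gives $\omega(A_i)=a_i$. Thus the chain forces $a_1\geq a_2\geq\cdots$, a weakly decreasing sequence of positive integers (using $a_i=A_i\cdot H>0$ from $U_5$), which must stabilize at some value $a$ for $i\geq N_0$.

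For $i\geq N_0$, the inequality $ind(A_i)\geq 2k$ reads
\[a^2+3a-2k\;\geq\;\sum_{j=1}^n\bigl(b_{i,j}^2+b_{i,j}\bigr).\]
Each summand is a nonnegative integer (for $b_{i,j}\in\ZZ$), so $|b_{i,j}|\leq M$ uniformly in $i$. Hence the tail $(A_i)_{i\geq N_0}$ lies in a finite subset $S\subset U_5$. A weakly decreasing chain in a finite set must revisit some element $A$ at two indices $i_1<i_2$; any intermediate $A_j$ then satisfies $A\geq A_j\geq A$, which forces $\omega(A_j)=\omega(A)$ for all $[\omega]\in\mathcal{P}^{c_1>0}$. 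Since this cone has nonempty interior in $H^2(X;\RR)$, we conclude $A_j=A$, and iterating on the blocks between repetitions shows the chain is eventually constant.

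The main subtlety I anticipate is the first step: carefully justifying that testing at the explicit finite vertex list of $\tilde{\mathcal{P}}^{c_1>0}$ — including the $Q_{ij}$ vertices arising from the chopped quadratic corner when $n\geq 10$ — suffices to detect $\geq$ on the open cone. Once that finite test set is in place, the index bound and finiteness argument are essentially routine bookkeeping.
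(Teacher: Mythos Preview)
Your proof is correct, but it takes a genuinely different route from the paper's. The paper's argument is shorter and more geometric: it picks finitely many generic points $[\omega_1],\ldots,[\omega_m]$ in the \emph{interior} of $\mathcal{P}^{c_1>0}$ such that the evaluation map $A\mapsto(\omega_1(A),\ldots,\omega_m(A))$ is injective, observes that if the chain fails to stabilize then for some $j$ the sequence $\omega_j(A_i)$ is weakly decreasing but not eventually constant, and then invokes Gromov compactness (exactly as in Lemma~\ref{lem:min}, using that each $A_i\in U_5=U_7$ is $J$-effective for an honest symplectic class $[\omega_j]$) to reach a contradiction. Your argument, by contrast, is purely combinatorial: you use the single vertex $P_1$ (i.e.\ pairing with $H$) to force the leading coefficient $a$ to stabilize, and then the index inequality $\sum_j b_{i,j}(b_{i,j}+1)\leq a^2+3a-2k$ traps the remaining integer coefficients in a finite box, after which the pigeonhole-plus-sandwich argument finishes. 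This avoids Gromov compactness entirely and is more elementary; the paper's version is slicker but leans on the analytic input already packaged in Lemma~\ref{lem:min}. One remark: the ``main subtlety'' you anticipate about testing at the full vertex list (including the $Q_{ij}$'s) never actually arises in your proof---you only use the easy direction that $A\geq B$ on the open cone implies, by continuity, the same inequality at the boundary point $P_1$. So the detailed vertex description of $\tilde{\mathcal{P}}^{c_1>0}$ is irrelevant here, and your first paragraph could be trimmed to just that continuity observation.
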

\begin{proof}
    Pick finitely many generic points in the interior of $c_1$-positive cone so that the evaluation of a class on those points determines the evaluation on the entire polytope. If the chain doesn't stabilize, there must exist one $[\omega]$ among those points such that $\omega(A_1)>\omega(A_2)>\cdots$.  Then it will contradict Gromov compactness as lemma \ref{lem:min}.
\end{proof}

 \begin{lemma}\label{lem:reduced}
Every minimal element $A=aH-\sum_{i=1}^nb_iE_i\in U_5$ must be reduced and satisfies $3a-\sum_{i=1}^nb_i\geq 1$.
\end{lemma}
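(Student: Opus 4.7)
The plan is to verify the conditions for $A=aH-\sum b_iE_i$ to be reduced (namely $b_1\geq\cdots\geq b_n$, $b_n\geq 0$, and $a\geq b_1+b_2+b_3$) together with $3a-\sum b_i\geq 1$ by contradiction: if $A$ violates one of them, I would produce a distinct $B\in U_5$ with $A\geq B$, contradicting the minimality of $A$. Two of the four substitutions come from applying a generator of $D_{K_0}$, which preserves $U_5$ by Lemma \ref{lemma:U5preserve}(1); the other two come from subtracting $E_n$ or $c_1$ from $A$.

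For the sorting and Cremona conditions, I would exploit the containment $\mathcal{P}^{c_1>0}\subseteq\mathcal{P}$, so that every $[\omega]$ in the domain satisfies $x_1\geq\cdots\geq x_n>0$ and $x_0\geq x_1+x_2+x_3$. If $b_i<b_j$ for some $i<j$, the reflection along $E_i-E_j$ simply swaps them, and a direct computation gives $\omega(r_{E_i-E_j}(A))-\omega(A)=(b_i-b_j)(x_i-x_j)\leq 0$. If $a<b_1+b_2+b_3$, the Cremona reflection along $v=H-E_1-E_2-E_3$ gives $\omega(r_v(A))-\omega(A)=(A\cdot v)\,\omega(v)\leq 0$. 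In both cases the inequality is strict at a generic $[\omega]$, so $A$ cannot be minimal. For $b_n\geq 0$, if $b_n\leq -1$ then $A-E_n\in U_5$, since $ind(A-E_n)=ind(A)-2b_n-2\geq ind(A)$ and $(A-E_n)\cdot H=a>0$, and $\omega(A-E_n)=\omega(A)-x_n<\omega(A)$ throughout $\mathcal{P}^{c_1>0}$.

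For $3a-\sum b_i=c_1\cdot A\geq 1$, my main move is to subtract the anticanonical: if $c_1\cdot A\leq 0$, set $B=A-c_1$, so that $ind(B)=ind(A)-2\,c_1\cdot A\geq ind(A)\geq 2k$ and $\omega(B)=\omega(A)-\omega(c_1)<\omega(A)$ on $\mathcal{P}^{c_1>0}$. This works whenever $B\cdot H=a-3>0$, i.e., $a\geq 4$. The residual range $a\leq 3$ is the main obstacle, since there $B$ leaves $U_5$; I would dispose of it by direct enumeration using reducedness together with the index bound. The constraint $b_3\leq(b_1+b_2+b_3)/3\leq a/3\leq 1$ leaves a very short finite list of candidates, and the tightest subcase is $a=3$ with $b_1=b_2=b_3=1$ and $A=3H-E_1-\cdots-E_m$; here the bound $ind(A)=18-2m\geq 2$ forces $m\leq 8$, hence $c_1\cdot A=9-m\geq 1$, while all other subcases for $a\leq 3$ manifestly satisfy the desired inequality.
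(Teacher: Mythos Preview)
Your proof is correct and follows essentially the same strategy as the paper's: for each defining condition of ``reduced'' and for $c_1\cdot A\geq 1$, you exhibit a distinct $B\in U_5$ with $A\geq B$ whenever the condition fails. The only cosmetic differences are that you invoke Lemma~\ref{lemma:U5preserve}(1) to handle $U_5$-membership for the sorting and Cremona steps via reflections (the paper instead subtracts a single copy of $E_i-E_j$ or $H-E_1-E_2-E_3$ and checks directly), and you postpone the finite enumeration for $a\le 3$ to the end, whereas the paper disposes of $a\le 3$ at the outset.
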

\begin{proof}
 We may assume $a>3$ by checking all the cases for $a=1,2,3$ from the index constraint. If $b_i<0$ for some $1\leq i \leq n$, then $A':=A+b_iE_i\in U_5$ will satisfy $A\geq A'$; if $b_i<b_j$ for some $1\leq i<j\leq n$, then $A':=A-(b_j-b_i)(E_i-E_j)\in U_5$ will satisfy $A\geq A'$; if $a<b_1+b_2+b_3$, then $A':=A-(H-E_1-E_2-E_3)\in U_5$ will satisfy $A\geq A'$. Thus $A$ must be reduced. If $3a-\sum_{i=1}^nb_i\leq 0$, then $A':=A-(3H-\sum_{i=1}^n E_i)$ has $ind(A')=ind(A)-2(3a-\sum_{i=1}^nb_i)\geq ind(A)$. This implies $A'\in U_5$ and $A\geq A'$. We thus know $3a-\sum_{i=1}^nb_i\geq 1$.
\end{proof}

\begin{lemma}\label{lem:c1bound}
    For each $n,k$, there exist constants $A_{n,k},C_{n,k}$ such that if $A=aH-\sum_{i=1}^nb_iE_i\in U_5$ with $a>A_{n,k}, 3a-\sum_{i=1}^{\min\{n,9\}}b_i>C_{n,k},3a-\sum_{i=1}^nb_i\geq 1$ and is reduced, then $A\geq kH-kE_1$. In particular, $A$ is not minimal.
\end{lemma}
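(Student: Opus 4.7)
The inequality $A \geq kH - kE_1$ says $\omega(A - kH + kE_1) \geq 0$ for every $[\omega] \in \mathcal{P}^{c_1>0}$. Since this is linear in $\omega$ and $\tilde{\mathcal{P}}^{c_1>0}$ is a bounded polytope with vertices $P_i$ and (for $n \geq 10$) $Q_{ij}$ as in Section~\ref{section:symplecticcone}, it suffices to check the inequality at each vertex; one then extracts $A_{n,k}$ and $C_{n,k}$ from the resulting finite list of scalar inequalities.

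At $P_i$, direct substitution gives $a - k$ at $P_1$, nonnegative expressions at $P_2, P_3$ from reducedness ($b_1 \leq a$ and $s_2 \leq a$), and $(3a - 2k - s_{i-1})/3$ for $4 \leq i \leq \min\{10, n+1\}$; these are handled by $A_{n,k} \geq k$ and $C_{n,k} \geq 2k$ (using $i - 1 \leq \min\{n, 9\}$ and the hypothesis on $s_{\min\{n, 9\}}$). For $n \geq 10$ and $4 \leq i \leq 9$, the values of $\omega_{P_i}$ and $\omega_{P_{j+1}}$ entering $Q_{ij} = t_{ij}P_i + (1 - t_{ij})P_{j+1}$ both take the form $(3a - 2k - s_\cdot)/3$; substituting the bounds $s_{i-1} < 3a - C_{n,k}$ and $s_j \leq 3a - 1$ causes the $3a$ terms to cancel exactly, reducing the desired inequality to
\[
2k(j - i + 1) \;\leq\; C_{n,k}(j - 9) + (10 - i).
\]
This is tightest at $(i, j) = (4, 10)$, where it reads $14k \leq C_{n,k} + 6$, so $C_{n,k} \geq 14k - 6$ suffices.

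The remaining case is $Q_{ij}$ with $i \in \{1, 2, 3\}$, where the $V(P_i)$ formulas differ and the inequality becomes a linear program on $(b_1, \ldots, b_j)$: maximize $3 s_j$, $b_1 + 2 s_j$, or $s_2 + 4 s_j$ respectively, subject to reducedness and $s_n \leq 3a - 1$. A short LP computation shows the extremum occurs at the configuration $b_1 \geq b_2 = \cdots = b_j$ saturating both $b_1 + 2b_2 = a$ and $s_j = 3a - 1$, and in all three subcases the resulting constraints, tightest at $j = 10$, simplify uniformly to $a \geq 7k - 3$. Thus $A_{n,k} \geq 7k - 3$ suffices. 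Setting $A_{n,k} := 7k$ and $C_{n,k} := 14k$ therefore makes $A \geq kH - kE_1$ hold; and since $kH - kE_1 \in U_5$ (its index is $2k$ and it pairs positively with $H$) and $a > A_{n,k} \geq k$ forces $A \neq kH - kE_1$, the class $A$ is not minimal. The main obstacle is the LP identification for $i \in \{1, 2, 3\}$: the naive bounds $b_1 \leq a$ and $s_j \leq 3a - 1$ cannot be saturated simultaneously under the reducedness constraint $b_1 + b_2 + b_3 \leq a$, so one must locate the true extremum to obtain the clean bound $7k - 3$.
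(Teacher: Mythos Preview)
Your overall strategy matches the paper's: test $A - (kH - kE_1)$ at the finitely many vertices of $\tilde{\mathcal{P}}^{c_1>0}$ and extract constants.  Where you diverge is in the treatment of the vertices $Q_{ij}$.  The paper handles $Q_{1l}, Q_{4l},\ldots,Q_{9l}$ uniformly by a majorization/Chebyshev argument (the value of $A$ at each such vertex is bounded below by $a-\frac{1}{3}s_{\min\{n,9\}}$, which exceeds $C_{n,k}/3$), and treats $Q_{2l},Q_{3l}$ by a dichotomy: if $a-b_1$ is bounded, then reducedness forces $b_3\leq\frac{a-b_1}{2}$ and hence $3a-s_l\geq 2a-\frac{(l-2)(a-b_1)}{2}$ is large once $a$ is large.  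Your approach instead substitutes the given bounds directly for $4\leq i\leq 9$ (this part is correct and gives the sharp inequality $2k(j-i+1)\leq C_{n,k}(j-9)+(10-i)$) and proposes a linear program for $i\in\{1,2,3\}$.

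The LP step contains arithmetic slips.  For $i=2$, $j=10$ the quantity to control is $3(a-b_1)+8(3a-s_{10})$, so one must maximize $3b_1+8s_{10}$, not $b_1+2s_{10}$; for $i=3$, $j=10$ it is $3s_2+14s_{10}$, not $s_2+4s_{10}$.  More importantly, your claimed extremum ``$b_2=\cdots=b_j$ with $b_1+2b_2=a$ and $s_j=3a-1$'' is not optimal for $i=3$ (there the weights on $b_1,b_2$ are equal, so one should take $b_2>b_3$), and the bound you derive, $a\geq 7k-3$, is too weak: for $i=2$, $j=10$ the true real-LP minimum is $\frac{12a+50}{7}$, forcing $a\geq\frac{56k-25}{6}\approx 9.3k$, and for $i=3$ one gets $a\geq\frac{217k-95}{27}\approx 8k$.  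So the explicit choice $A_{n,k}=7k$ does not work.

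None of this damages the lemma, which only asserts existence: in every case the minimum value at the vertex tends to $+\infty$ with $a$, so some $A_{n,k}$ (e.g.\ $10k$) suffices.  If you want clean constants, the paper's dichotomy for $Q_{2l},Q_{3l}$ is tidier than the LP; if you prefer the LP, just redo the two objectives and adjust $A_{n,k}$ upward.
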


\begin{proof}
The strategy is to test the values of $A$ and $kH-kE_1$ at all the vertices of the $c_1$-positive polytope $\tilde{\mathcal{P}}^{c_1>0}$. When $n\leq 9$, the $c_1$-positive polytope has vertices $P_1=(0,\cdots,0),P_2=(1,0,\cdots,0),P_3=(\frac{1}{2},\frac{1}{2},0,\cdots,0),P_4=(\frac{1}{3},\frac{1}{3},\frac{1}{3},0,\cdots,0),\cdots, P_{n+1}=(\underbrace{\frac{1}{3},\cdots,\frac{1}{3}}_{n})$. When $n\geq 10$, the additional vertices will be the point on the segment connecting $(\underbrace{\frac{1}{3},\cdots,\frac{1}{3}}_{\geq 10},0,\cdots,0)$ and some $P_j$ satisfying the sum of all entries is $3$. They are of the forms $Q_{jl}:=tP_j+(1-t)(\underbrace{\frac{1}{3},\cdots,\frac{1}{3}}_{l-1\geq 10},0,\cdots,0)$ with $0< t< 1$. The values of $A$ at $P_1,P_2,P_3$ are $a,a-b_1,\frac{1}{2}(a+a-b_1-b_2)\geq \frac{1}{2}a$. So when $a>k$, those values will be larger than the values of $kH-kE_1$ at $P_1,P_2,P_3$ which are $k,0,\frac{k}{2}$.

At the vertices $Q_{2l},Q_{3l}$, the values of $A$ are 
\[t(a-b_1)+\frac{1-t}{3}(3a-\sum_{i=1}^lb_i)\,,\,t(a-\frac{1}{2}b_1-\frac{1}{2}b_2)+\frac{1-t}{3}(3a-\sum_{i=1}^lb_i).\] Since $0<t<1$, both of them would be greater than $\varepsilon\max\{a-b_1,3a-\sum_{i=1}^lb_i\}$ for some positive number $\varepsilon=\min\{t,\frac{1-t}{3}\}$. Now we use the fact:
\begin{itemize}
	\item For any $X,Y>0$, there exists a constant $\alpha_{X,Y}$ such that if $a>\alpha_{X,Y},a-b_1<X$, then $3a-\sum_{i=1}^lb_i>Y$.
\end{itemize} 
This is because by reduced condition on the class $A$, $a-b_1<X$ will imply $b_3\leq \frac{X}{2}$. Then one can see $$3a-\sum_{i=1}^lb_i\geq 2a+(a-b_1-b_2)-(l-2)b_3\geq 2a-(l-2)\frac{X}{2}.$$ Therefore there exists $A_{n,k}$ such that if $a\geq A_{n,k}$ then $\max\{a-b_1,3a-\sum_{i=1}^lb_i\}$ is large enough.

 For all the other vertices, we claim that the values of $A$ must be greater than $a-\frac{1}{3}\sum_{i=1}^{\min\{n,9\}}b_i$. This is obvious for $P_4,P_5,\cdots$. For other $Q_{jl}$'s with $j\neq 2,3$, we can use another fact which is the consequence of Chebyshev's sum inequality and the reduced condition $b_1\geq \cdots\geq b_n$:
\begin{itemize}
    \item $-\sum_{i=1}^l\lambda_ib_i\geq -\frac{1}{3}\sum_{i=1}^9b_i$ if $\sum_{i=1}^l\lambda_i=3$ and $0\leq \lambda_i\leq \frac{1}{3}$ for all $i$.
\end{itemize}
 Finally it suffices to let $C_{n,k}$ be the triple of maximum among the values of $kH-kE_1$ at all the vertices. 
\end{proof}

Now our main result can be converted to the following:
\begin{prop}\label{prop:finite}
    The set of minimal elements in $U_5$ is finite.
\end{prop}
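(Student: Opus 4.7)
The plan is to reduce Proposition~\ref{prop:finite} to Dickson's lemma by evaluating classes at the vertices of the closed polytope $\overline{\tilde{\mathcal{P}}^{c_1>0}}$. Enumerate these vertices $V_1,\ldots,V_m$ using the description in Section~\ref{section:symplecticcone}: they are the points $P_1,\ldots,P_{\min(n+1,10)}$, together with the $Q_{ij}$'s when $n\ge 10$. Set $\Phi(A):=(V_1(A),\ldots,V_m(A))\in\mathbb{R}^m$. Since any linear functional on a polytope attains its minimum at a vertex, the partial order $A\ge B$ on $U_5$ is equivalent to $\Phi(A)\ge\Phi(B)$ componentwise; and since these vertices affinely span the hyperplane $\{x_0=1\}\subset H^2(X;\mathbb{R})$, the map $\Phi$ is injective on $H_2(X;\mathbb{R})$.

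The technical heart of the argument is the positivity claim: every reduced $A\in U_5$ with $c_1\cdot A\ge 1$ satisfies $V_j(A)\ge 0$ for all $j$. For $l\ge 4$ one has the identity
\[
P_l(A)-\tfrac{1}{3}(c_1\cdot A)=\tfrac{1}{3}\sum_{i=l}^{n}b_i\ge 0,
\]
so $P_l(A)\ge (c_1\cdot A)/3\ge 1/3>0$. The reducedness conditions $a\ge b_1+b_2+b_3$ and $b_1\ge\cdots\ge b_n\ge 0$ immediately give $P_1(A)=a>0$, $P_2(A)=a-b_1\ge 0$ and $P_3(A)\ge a/2>0$. Each $Q_{ij}$ is by construction a convex combination of $P_i$ and $P_{j+1}$ with $j+1\le n+1$, both nonnegative on reduced $A$ by the previous bullets; hence $Q_{ij}(A)\ge 0$ automatically.

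To finish, observe that all vertex coordinates are rationals sharing some common denominator $d$, so $d\Phi$ sends $U_5^{\mathrm{red}}:=\{A\in U_5 : A\text{ is reduced and } c_1\cdot A\ge 1\}$ into $\mathbb{N}^m$. By Dickson's lemma, any subset of $\mathbb{N}^m$ has only finitely many minimal elements under componentwise $\le$, and the injectivity of $\Phi$ upgrades this to finitely many minimal classes in $U_5^{\mathrm{red}}$. Lemma~\ref{lem:reduced} places every minimal element of $U_5$ inside $U_5^{\mathrm{red}}$, and the reduction procedure in the proof of that lemma (combined with Lemma~\ref{lem:stabilize}) gives the reverse inclusion of minimal sets, finishing the argument. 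The main technical obstacle, namely the positivity check at the $Q_{ij}$ vertices when $n\ge 10$, dissolves via the displayed identity applied to $P_{j+1}$ (valid for every $j\le n$) together with convexity, so the difficulty is largely notational rather than conceptual.
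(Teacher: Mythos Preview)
Your argument is correct and is a genuinely different---and cleaner---route than the paper's.

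The paper proceeds by establishing an explicit upper bound on the $H$–coefficient $a$ of any minimal element. To do this it first invokes Lemma~\ref{lem:c1bound} (comparing against the test class $kH-kE_1$ at each vertex of $\tilde{\mathcal P}^{c_1>0}$) to force $3a-\sum_{i\le 9}b_i\le C_{n,k}$, then rewrites $A$ as a class on an $8$–point blowup plus multiples of $B-E_i=3H-E_1-\cdots-E_8-E_i$, and finally uses an index computation to bound $\sum_{i\ge 9}b_i$. This is rather hands-on and relies on the auxiliary Lemma~\ref{lem:c1bound} in an essential way.

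Your approach replaces all of that by a single structural observation: once Lemma~\ref{lem:reduced} pins every minimal class into $U_5^{\mathrm{red}}$, the vertex–evaluation map $d\Phi$ lands $U_5^{\mathrm{red}}$ inside $\mathbb N^m$, and Dickson's lemma finishes immediately. The only computation you actually need is the positivity $V_j(A)\ge 0$ on reduced classes with $c_1\cdot A\ge 1$, and your identity $P_l(A)=\tfrac13(c_1\cdot A)+\tfrac13\sum_{i\ge l}b_i$ together with the convex–combination description of the $Q_{ij}$'s handles this in a couple of lines. In particular you bypass Lemma~\ref{lem:c1bound} entirely. Two minor remarks: (i) the ``reverse inclusion of minimal sets'' at the end is true but superfluous---once a minimal element of $U_5$ lies in $U_5^{\mathrm{red}}$ it is automatically minimal there, which already gives finiteness; (ii) your argument is robust in that it would work verbatim for any rational polytope replacing $\tilde{\mathcal P}^{c_1>0}$, provided the analogous positivity holds on the relevant reduced classes.
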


\begin{proof}
 We will show minimal elements $A=aH-\sum_{i=1}^nb_iE_i\in U_5$ have a common upper bound of the coefficient $a$. By Lemma \ref{lem:reduced} and \ref{lem:c1bound} we can further assume $A$ is reduced and satisfies \[3a-\sum_{i=1}^nb_i\geq 1\,,\, 3a-\sum_{i=1}^{\min\{n,9\}}b_i\leq C_{n,k}.\] When $n<9$ we can pretend that we are in the case when $n=9$ with $b_{n+1}=\cdots=b_9=0$. So next we always suppose $n\geq 9$. Note that from the assumptions, \[\sum_{i>9}b_i=(3a-\sum_{i=1}^{9}b_i)-(3a-\sum_{i=1}^{n}b_i)< C_{n,k}.\]
 Denote $3H-\sum_{i=1}^8E_i$ by $B$. We can write \[A=cH-\sum_{i=1}^8d_iE_i+\sum_{i=9}^nb_i(B-E_i),\]
 where $c=a-3\sum_{i\geq 9}b_i, d_i=b_i-\sum_{j\geq 9}b_j$. Since $3c-\sum_{i=1}^8d_i=3a-\sum_{i=1}^nb_i$ we have $1\leq 3c-\sum_{i=1}^8d_i\leq C_{n,k}$. By reduced condition on $A$, \[X_1:=c-d_1-d_2-d_3,X_2:=c-d_4-d_5-d_6,X_3:=c-d_7-d_8\] will satisfy $0\leq X_1\leq X_2\leq X_3$. For any $1\leq i\leq 8$, the inequality $X_1+X_2+X_3\leq C_{n,k}$ would then imply $$d_i\leq d_1\leq d_1+(d_2-d_7)+(d_3-d_8)=X_3-X_1\leq C_{n,k}.$$ On the other hand, we also have $$d_i=(b_i-b_9)-\sum_{j>9}b_j>-C_{n,k}.$$ Thus we have a bound $D_{n,k}$ for all $|d_i|$ and $|c|$. We then compute \[ind(A)=c^2-\sum _{i=1}^8d_i^2+\sum_{9\leq i<j\leq N}2b_ib_j+(1+\sum_{i=9}^n2b_i)(3c-\sum_{i=1}^8d_i)\]
 \[\geq c^2-\sum _{i=1}^8d_i^2+\sum_{i=9}^n2b_i\geq -8D_{n,k}^2+\sum_{i=9}^n2b_i .\]
As a consequence, whenever $\sum_{i=9}^nb_i>4D_{n,k}^2+k$, $A-(B-E_i)$ still has index $\geq 2k$ for any $9\leq i\leq n$. Since we may assume $a>3$, $A-(B-E_i)\in U_5$. One can see that $A\geq A-(B-E_i)$. Therefore for a minimal element $A$, we then have $a=c+3\sum_{i=9}^nb_i\leq D_{n,k}+12D_{n,k}^2+3k$.
\end{proof}

\begin{proof}[Proof of Theorem \ref{thm:main}]
Combining lemma \ref{lem:stabilize} and proposition \ref{prop:finite} we get the tropical property over the $c_1$-positive cone: there exists finitely many $A_1,\cdots,A_m\in U_5$ such that the capacity function is given by 
$$f_k|_{\mathcal{P}^{c_1>0}}([\omega])=\min \{\omega(A_1),\cdots,\omega(A_m)\}.$$

Now for any $[\omega']\in\mathcal{C}_{K_0}$, it is given by $g^*([\omega])$ for some $g^*\in D_{K_0}$\footnote{Here we are sloppy about the notation: we use $D_{K_0}$ to denote also the cohomological actions.} and $[\omega]\in\mathcal{P}^{c_1>0}$. We then have
$$f_k|_{\mathcal{C}_{K_0}}([\omega'])=\min \{\omega'(g_*(A_i))\,|\,g_*\in D_{K_0},1\leq i\leq m\}.$$ Therefore, when $n\leq 8$, by lemma \ref{lemma:U5preserve} the set on the right hand side must be finite and we have the tropical property over $\mathcal{C}_{K_0}$.
\end{proof}

\begin{rmk}
    The number of minimizers $m$ appeared above describes the amount of curve classes which really play a role in obstructing symplectic embeddings. In terms of the upper bound for the $a$ coefficient in the proof of proposition \ref{prop:finite}, say $M$, it's easy to get a `cheap' upper bound for $m$ such as $1^n+2^n+\cdots+M^n$ since minimal elements have all $b_i$ coefficients no larger than $a$ . This estimate and the estimate for the upper bound $M$ are very rude, so it's an interesting question to investigate the more precise values of $m$. 
\end{rmk}

\begin{rmk}\label{rmk:nonc1nef}
    One can not hope to extend the tropical property from $\mathcal{P}^{c_1>0}$ to the larger cone $\mathcal{P}$ removing the $c_1$-positive condition. Consider the following classes for $n=10$:
    \[N_{a}=3H-E_1-\cdots-E_8+\frac{a^2+a}{2}(3H-E_1-\cdots-E_9)-aE_{10}.\] Then one can compute \[ind(N_{a})=N_a^2-K_0\cdot N_a=(1-a^2+a^2+a)-(-1+a)=2.\] So by taking $a\in\ZZ_+$, we obtain an infinite family of classes with index $2$. Note that $$N_{a+1}-N_a=(a+1)(3H-E_1-\cdots-E_9)-E_{10},$$ and one can take $\varepsilon_a>0$ small enough such that $$[\omega_a]:=(1,\frac{1-\varepsilon_a}{3},\cdots,\frac{1-\varepsilon_a}{3},(3a+4)\varepsilon_a)\in\mathcal{P}.$$
    Then $N_{a+1}$ will have minimal area among the classes $\{N_1,\cdots,N_a,N_{a+1}\}$. This means we will need infinitely many minimizers for those non $c_1$-positive $[\omega_a]$'s.

    Also one can not expect the tropical property holds without the reduced condition for $n\geq 9$. This is because the orbit of classes in $U_5$ under the action $D_{K_0}$ becomes infinite when $n\geq 9$.
\end{rmk}

We prove an immediate corollary of the tropical property which will be used in the next section to relate ECH capacities (see also next subsection for a discussion on the algebraic proof). Note that although the tropical property is only for symplectic forms satisfying the reduced condition, we can always view an arbitrary symplectic form as the pullback of a reduced one by a diffeomorphism.  
\begin{corollary}\label{cor:blowup}
Assume $[\omega]$ is a $c_1$-positive symplectic class on $X$. Let $\omega_{\varepsilon}$ be the symplectic form on $X\#\overline{\CC\PP}^2$ with small blowup size $\varepsilon$. Then we have
$$\lim_{\varepsilon\rightarrow 0}f_k(X\#\overline{\CC\PP}^2,\omega_{\varepsilon})=f_k(X,\omega).$$
\end{corollary}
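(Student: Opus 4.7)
The plan is to apply Theorem \ref{thm:main} on both $X$ and the blowup $\tilde X:=X\#\overline{\CC\PP}^2$, reducing the limit to a comparison of two finite families of linear functionals in $\varepsilon$. First I verify that $[\omega_\varepsilon]\in\mathcal{P}^{c_1>0}(\tilde X)$ for small $\varepsilon>0$: writing $[\omega]=(x_0,x_1,\ldots,x_n)$ in reduced form, $[\omega_\varepsilon]=(x_0,x_1,\ldots,x_n,\varepsilon)$ satisfies all the defining inequalities of the reduced $c_1$-nef cone together with $c_1(\tilde X)\cdot[\omega_\varepsilon]=c_1(X)\cdot[\omega]-\varepsilon>0$ as soon as $\varepsilon$ is smaller than $\min\{x_n,\,3x_0-\sum_i x_i\}$. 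Hence Theorem \ref{thm:main} applied to $\tilde X$ produces a uniform finite list $B_1,\dots,B_{m'}\in U_5(\tilde X)$ of minimizers valid for all such $\varepsilon$.

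The upper bound $f_k(\tilde X,\omega_\varepsilon)\le f_k(X,[\omega])$ is immediate: any minimizer $A\in U_5(X)$ extends to $\tilde X$ by setting its $E_{n+1}$-coefficient to zero, which preserves both the index and the $H$-pairing, so the extension lies in $U_5(\tilde X)$ and has $\omega_\varepsilon$-area equal to $\omega(A)$.

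For the matching lower bound, write each $B_j=a_jH-\sum_{i=1}^n b_{j,i}E_i-c_jE_{n+1}$ and consider its projection $B_j':=a_jH-\sum_{i=1}^n b_{j,i}E_i\in H_2(X;\ZZ)$. A direct computation comparing the canonical classes on $X$ and $\tilde X$ yields
\[ind_X(B_j')=ind_{\tilde X}(B_j)+c_j(c_j+1)\ge ind_{\tilde X}(B_j)\ge 2k,\]
using the elementary fact that $c(c+1)\ge 0$ for every integer $c$. Combined with $B_j'\cdot H=a_j>0$, this shows $B_j'\in U_5(X)$, so Proposition \ref{prop:equivalentdef} gives $\omega(B_j')\ge f_k(X,[\omega])$. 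Because $\omega_\varepsilon(B_j)=\omega(B_j')-c_j\varepsilon$ and the finite family $\{|c_j|\}_{j=1}^{m'}$ is uniformly bounded by some constant $C$, we conclude $f_k(\tilde X,\omega_\varepsilon)\ge f_k(X,[\omega])-C\varepsilon$, which squeezes with the upper bound as $\varepsilon\to 0$.

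The essential point, and in some sense the main obstacle, is the \emph{universality} of the minimizer list: a priori the minimizing class on $\tilde X$ could depend on $\varepsilon$ in an uncontrolled way, preventing any limit statement. Theorem \ref{thm:main} is precisely what removes this difficulty, and it is also where the $c_1$-nef hypothesis on $[\omega]$ is genuinely used, since Remark \ref{rmk:nonc1nef} shows the finite minimizer list can fail outside $\mathcal{P}^{c_1>0}$.
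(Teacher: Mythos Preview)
Your proof is correct and follows essentially the same route as the paper's: both use the inclusion $H_2(X;\ZZ)\hookrightarrow H_2(\tilde X;\ZZ)$ for the upper bound and the projection $p:H_2(\tilde X;\ZZ)\to H_2(X;\ZZ)$ together with the finite minimizer list from Theorem \ref{thm:main} for the lower bound. Your version is simply more explicit---you verify $[\omega_\varepsilon]\in\mathcal{P}^{c_1>0}(\tilde X)$, spell out the index comparison $ind_X(B_j')=ind_{\tilde X}(B_j)+c_j(c_j+1)$, and quantify the error as $C\varepsilon$---whereas the paper just states that $p$ ``doesn't decrease index'' and passes to the limit.
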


\begin{proof}
    On the one hand, since the natural inclusion $$i:H_2(X;\ZZ)\hookrightarrow H_2(X\#\overline{\CC\PP}^2;\ZZ)$$ preserves the index, the minimizers for $(X,\omega)$ can also be the minimizers for $(X\#\overline{\CC\PP}^2,\omega_{\varepsilon})$ by composing this $i$. So we have $f_k(X\#\overline{\CC\PP}^2,\omega_{\varepsilon})\leq f_k(X,\omega)$. On the other hand, by theorem \ref{thm:main} we only need to consider finitely many minimizers $A_j$'s for $(X\#\overline{\CC\PP}^2,\omega_{\varepsilon})$ to compute its $f_k$. Note that the natural projection $$p:H_2(X\#\overline{\CC\PP}^2;\ZZ)\rightarrow H_2(X;\ZZ)$$ doesn't decrease index. So if $\varepsilon$ goes to zero, by finiteness $\min_j\{\omega_{\varepsilon}(A_j)\}$ will go to $\min_j\{\omega(p(A_j))\}$, which is no less than $f_k(X,\omega)$.
\end{proof}

\subsection{Comparison with the algebraic counterpart}\label{section:comparison}
It's shown in \cite[proposition 3.6]{FM} that any $c_1$-positive symplectic form on rational manifolds can be realized as the K\"{a}hler form of certain complex structure called the ``good generic" complex structure $J$, which means the anti-canonical class $-K_J$ is effective and smooth, and there is no smooth rational curve of self-intersection $-2$. The question surrounding whether symplectic forms without the $c_1$-positive condition could be Kähler remains a deeply mysterious area of study (see \cite{biran} for an exploration in this direction). On the other hand, for other manifolds such as the one point blowup of $S^2$-bundle over $T^2$, one point blowup of $T^4$ or minimal K\"{a}hler surfaces of general type with $b_2^+>1$, there indeed exist non-K\"{a}hler symplectic forms (\cite{nonkahlerruled}, \cite{nonkahlerT4}, \cite{Draghici}) although some other symplectic forms certainly could be K\"{a}hler (unlike Kodaira-Thurston manifold). Note that all complex structures on rational or ruled manifolds have $p_g=0$ and thus any K\"{a}hler triple $(X,\omega,J)$ must be algebraic. This is why we still use `algebraic capacities' in the title and motivates us to wonder to what extent our tamed capacities coincide with Wormleighton's algebraic capacities. 

We first point out that working in the algebraic category, the values for algebraic capacities must be finite. The reason is that we have the ample class $A\in H^{1,1}(X;\ZZ)$ from the polarization. Note that the set
\[\{J\text{-nef}, ind\geq 2k\}\subseteq \text{PD}(H^{1,1}(X;\ZZ))\]
where we do optimization always contains $\text{PD}(nA)$ for large $n$ since it's obviously nef and the index $(nA)^2-K\cdot (nA)$ is a quadratic polynomial of $n$. 
However, when $J$ is not projective, $f_k$ could be $\infty$ due to the triviality of the N\'{e}ron-Severi lattice $H^{1,1}(X;\ZZ)$. Even when $H^{1,1}(X;\ZZ)\neq 0$, there could be the case that the index condition $A^2-K\cdot A\geq 2k\geq 2$ makes the set for optimization empty. For instance, one can take \cite{Zucker}'s example\footnote{It serves as the counterexample to the K\"{a}hler version of Hodge conjecture.} of a non-algebraic complex tori with a negative definite N\'{e}ron-Severi lattice of rank $2$. Such a phenomenon shows that other than rational manifolds which are mainly considered in this paper, there could be a huge distinction between the algebraic capacities using projective complex structures and tamed capacities using possibly non-projective, even non-integrable almost complex structures.



Next, we compare two finiteness results in \cite{Algebraic} to ours. The first one is about viewing algebraic capacities as functions over the big cone. 

\begin{prop}[\cite{Algebraic} proposition 3.1]
    For any projective surface $Y$, if we fix $k$ and view $c_k^{\rm{alg}}(Y,A)$ as a function with variable $A$ inside the big cone $\rm{Big}$$(Y)$, then there is a locally finite chamber decomposition of $\rm{Big}$$(Y)$ such that the function is linear on each chamber.
\end{prop}

 Our theorem \ref{thm:main} stands as a refinement of the previously mentioned proposition when $Y$ is a rational manifold. The key point of our result is that we can improve the local finiteness into global finiteness by restricting to a natural domain parametrizing $c_1$-positive symplectic forms on $Y$. This is rather unexpected to us since in both the study of topology of symplectomorphism groups (\cite{ALLP-stability}) and enumeration of log Calabi-Yau divisors (\cite{Enumerate}) for rational manifolds with varying symplectic forms, the symplectic cone is always decomposed into infinitely many chambers subject to the invariance of certain $\pi_i(\rm{Symp}$$(X,\omega))$ or the amount of log Calabi-Yau divisors. Remarkably, tamed capacities exhibit a distinct pattern due to the finiteness theorem \ref{thm:main}. 
 
 \begin{example}\label{example}
     We can investigate the simple example $Y=\CC\PP^2\#\overline{\CC\PP}^2$ to gain the precise meaning of this refinement. We equip $Y$ with the non-minimal complex structure (first Hirzebruch surface). The real N\'{e}ron-Severi group $\rm{NS}$$(Y)\otimes_{\ZZ} \RR$ can be naturally identified with $H^2(Y;\RR)$. Note that $\rm{Big}$$(Y)$ is the interior of the Mori cone $\overline{\rm{NE}}$$(Y)$ generated by the exceptional class $E$ and the fiber class $H-E$, and the reduced $c_1$-positive symplectic cone $\mathcal{P}^{c_1>0}=\mathcal{P}$ is just the ample cone $\rm{Amp}$$(Y)$, or the interior of the nef cone $\rm{Nef}$$(Y)$ generated by the line class $H$ and the fiber class $H-E$. Since $\rm{Big}$$(Y)$ is open, the algebraic result doesn't guarantee the finiteness of minimizers when the big $\RR$-divisor is moving towards the boundary ray generated by the fiber class $H-E$. See the following figure \ref{fig:cone3}.
 \end{example}

\begin{figure}[H]
		\includegraphics*[width=\linewidth]{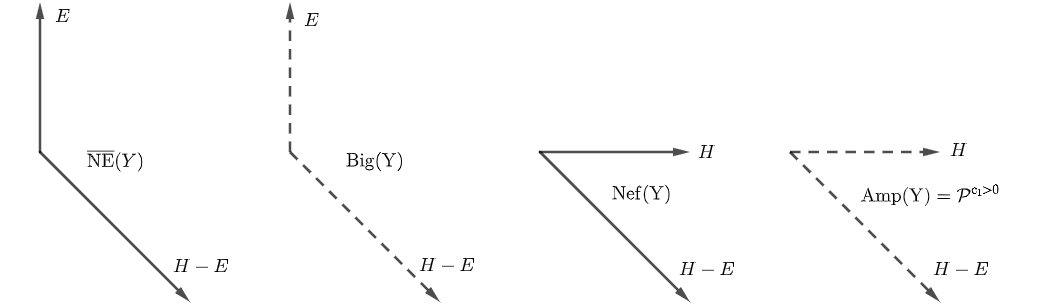}
 
		\caption{Cones in the algebraic settings, where $Y$ is the first Hirzebruch surface. \label{fig:cone3}}
	\end{figure}
 
The second one appears in the context of studying leading and sub-leading asymptotic behavior for capacities.

 \begin{prop}[\cite{Algebraic} lemma 4.15]
    For a smooth pseudo-polarized surface $(Y,A)$ with $A$ being a $\ZZ$-divisor, there exists finitely many nef $\ZZ$-divisors $D_1,\cdots,D_n$ and an integer $K$ such that for all $k\geq K$, the values $c_k^{\rm{alg}}(Y,A)$ must be $A\cdot D_i+jA^2$ for some $i\in\{1,\cdots,n\}$ and $j\in\ZZ_{\geq 0}$.
\end{prop}

    Note that in our finiteness theorem, we fix the $k$-th capacity and allow the symplectic forms to deform, but \cite{Algebraic} fixes a $\ZZ$-divisor (which can be thought as an integral symplectic form) but allows $k$ to be arbitrarily large since the goal there is to study the asymptotic behavior of the capacities.
\begin{theorem}[\cite{Algebraic} theorem 4.2]
    Suppose $Y$ is a smooth algebraic surface, $A$ is a big and nef divisor. Then 
    \[\lim_{k\rightarrow\infty}\frac{c_k^{\rm{alg}}(Y,A)^2}{k}=2A^2.\]
\end{theorem}

We observe that the argument for the preceding theorem relies solely on the numerical attributes of the divisor $A$. For a general symplectic form $\omega$, though it may not correspond to a divisor in the algebraic settings, we can still use the numerical property of the class $[\omega]$ to prove the following variant. The proof below follows the one in \cite{Algebraic} with a mild modification.
\begin{prop}\label{prop:asymp}
    Let $(X,\omega,J)$ be a tamed triple with $b_2^+(X)=1$, then
    \[\lim_{k\rightarrow\infty}\frac{f_k(X,\omega,J)^2}{k}=2[\omega]^2.\]
\end{prop}
\begin{proof}
    For convenience we don't distinguish a class with its Poincar\'{e} dual. Since $b_2^+(X)=1$, we can choose $e_1,\cdots,e_n\in H_2(X;\RR)$ such that $\{[\omega],e_1,\cdots,e_n\}$ forms an orthogonal basis of $H_2(X;\RR)$ and all $e_i^2=-1$.  For any $k\in[-\infty,\infty)$, consider the real version of the $U_1$ set (where we omit $k$ in section \ref{section:hierarchy})
    \[\tilde{U}^k_1:=\{A\in H_2(X;\RR)\,|\,A\cdot C\geq 0\, \forall J\text{-effective }C, A^2-K\cdot A\geq 2k\}\subseteq H_2(X;\RR).\]
    Note that $[\omega]$ and its sufficiently small perturbations must satisfy the $J$-nef condition so that $\tilde{U}^{-\infty}_1$ must contain a conical open neighborhood of the ray $\RR^+[\omega]$. Observe also that for any $k$, a large multiple $N_k[\omega]$ of $[\omega]$ will satisfy the index condition so that it lives in $\tilde{U}^k_1$.
    
    Now suppose $-K=c[\omega]+\sum_{i=1}^nd_ie_i$, then a class $A=a[\omega]+\sum_{i=1}^nc_ie_i\in \tilde{U}^k_1$ will have $\omega(A)=a[\omega]^2$ and satisfy
    \[a^2+ca-\frac{2k+\sum_{i=1}^nb_i(b_i+d_i)}{[\omega]^2}\geq 0.\] Let $\delta:=-\frac{1}{2}\sum_{i=1}^ne_i\in H_2(X;\RR)$, $T_k:=\frac{2k-\frac{1}{4}\sum_{i=1}^nd_i^2}{[\omega]^2}$ and $a_k:=\frac{-c+\sqrt{c^2+4T_k}}{2}$ . Now  we define \[\tilde{f}_k(X,\omega,J):=\inf_{A\in \tilde{U}_1}\omega(A).\] By the property of $\tilde{U}^{-\infty}_1$, for large $k$, $\tilde{A}_k:=a_k[\omega]+\delta\in\tilde{U}^k_1$. Note that $b_2^+(X)=1$ condition also guarantees the existence of a non-zero $J$-effective class. This follows from \cite[proposition 4.3]{LiLiu01}, which says one can always choose a class with positive square and take its large multiple so that it has non-zero SW invariant. Then it's easy to see $\tilde{f}_k(X,\omega,J)=\omega(\tilde{A}_k)=a_k[\omega]^2$ for large $k$ by our choice of the coefficients and the existence of a non-zero $J$-effective class ($a_k$ must be positive). 
    
    To compare $f_k$ and $\tilde{f}_k$, we have to choose an integral approximation $A_k\in H_2(X;\ZZ)$ for each $k$. We require that $A_k$ is `close' to $\tilde{A}_k$ in the sense that all the coefficients of $A_k-\tilde{A}_k$ in terms of the basis $\{[\omega],e_1,\cdots,e_n\}$ have a uniform bound for all $k$. Also, the conical open subset in $\tilde{U}^{-\infty}_1$ makes it possible to let us choose the lattice point $A_k\in \tilde{U}^{-\infty}_1$ for all large $k$ (figure \ref{fig:asym}). Then by a simple computation of the index, it should be clear that $A_k\in \tilde{U}_1^{k+\Delta(k)}$, where the difference term $\Delta(k)$ is an $O(\sqrt{k})$-term by our `closeness'. Therefore \[\lim_{k\rightarrow\infty}\frac{f_k(X,\omega,J)^2}{k}=\lim_{k\rightarrow\infty}\frac{f_{k+O(\sqrt{k})}(X,\omega,J)^2}{k}\leq \lim_{k\rightarrow\infty}\frac{\tilde{f}_k(X,\omega,J)^2}{k} =2[\omega]^2.\]    On the other hand, the obvious relation $\tilde{f}_k(X,\omega,J)\leq f_k(X,\omega,J)$ makes the preceding inequality into equality. 
\end{proof}

\begin{figure}[h]
		\includegraphics*[width=\linewidth]{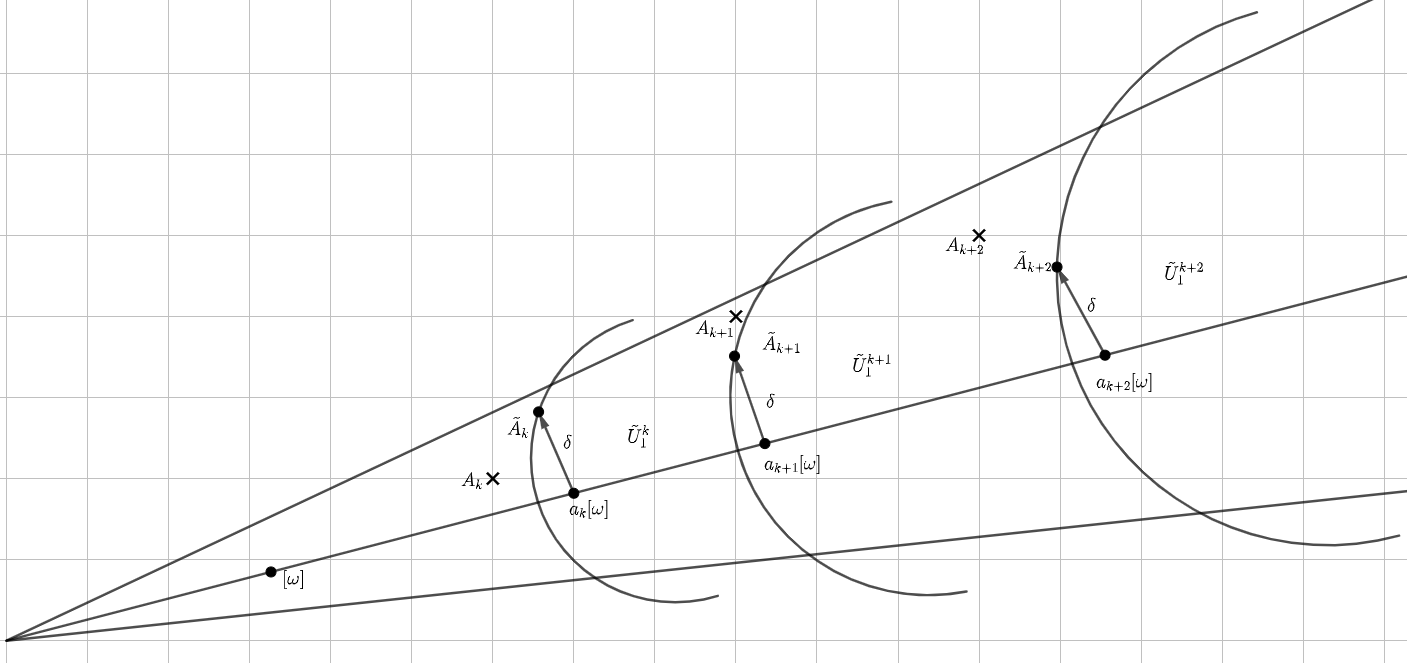}
 
		\caption{This is a conical neighborhood of the ray $\RR^+[\omega]\subseteq \tilde{U}_1^{-\infty}$, where lattice points denote integral homology classes.}\label{fig:asym}
	\end{figure}
   
This asymptotic result gives us a hint that when $k$ is large, the optimizers for $f_k(X,\omega,J)$ are always the integral class very close to a big multiple of $[\omega]$. Therefore we propose the following question regarding $J$-independence.

\begin{question}\label{question:asym}
    Let $(X,\omega)$ be a symplectic $4$-manifold with $b_2^+(X)=1$. Is there a constant $C_{[\omega]}\in\ZZ_+$, which only depends on the cohomology class of $\omega$, such that for all $k\geq C_{[\omega]}$ and any tamed triples $(X,\omega_1,J_1)$, $(X,\omega_2,J_2)$ where $\omega_1,\omega_2$ are cohomologous to $\omega$, $f_k(X,\omega_1,J_1)=f_k(X,\omega_2,J_2)$?
\end{question}

\begin{rmk}
    Without the assumption $b_2^+=1$, this asymptotic behavior will not be true as we already see $f_k$ could be $\infty$. The above argument fails since if $J$ is only tame but not compatible with $\omega$, $[\omega]$ may not live in $H_2(X;\RR)\cap \text{PD}(H_J^+(X))$. Even for compatible pairs, a conical neighborhood of the ray $\RR^+[\omega]\subseteq H_2(X;\RR)\cap \text{PD}(H_J^+(X))$ may not contain any lattice point in $H_2(X;\ZZ)$.
\end{rmk}

   Finally we point out a result that looks quite similar to corollary \ref{cor:blowup}.
 \begin{prop}[\cite{Algebraic} proposition 3.4]
    For smooth projective surface $Y$ with $E\subseteq Y$ a $(-1)$-curve. Let $\pi:Y\rightarrow \overline{Y}$ be the contraction of $E$. Then $$c_k^{\rm{alg}}(Y,\pi^*\overline{A})=c_k^{\rm{alg}}(\overline{Y},\overline{A}).$$
\end{prop}

 In fact, combined with \cite[corollary 3.2]{Algebraic} saying that $c_k^{\rm{alg}}$ is continuous over the big cone, corollary \ref{cor:blowup} could be recovered in the algebraic settings if we know $\overline{A}$ is ample which guarantees $\pi^*\overline{A}$ is big and nef. Since it's not clear whether some non $c_1$-positive symplectic form is K\"{a}hler, it's natural to ask
\begin{question}
    Without assuming $c_1$-positive condition can we still prove corollary \ref{cor:blowup}? 
\end{question}

Taking a closer look at the proof of \cite[proposition 3.1]{Algebraic}, one will find that the finiteness of the set $$\{D\in \text{Nef}(Y):D\cdot A\leq M\}$$
is used, where $A$ is only assumed to be a big divisor. This is an immediate consequence of  Kleiman’s criterion for nefness. Although this has a symplectic analogue like \cite[proposition 4.1.5]{MSJcurvebook}, it's not so straightforward to adjust the argument since a divisor $A$ which is only big might not directly correspond to an actual symplectic form. But by the tropical property one sees how easy it is to obtain a symplectic proof of corollary \ref{cor:blowup}.

\section{Relation with ECH capacities}\label{section:ECH}
From now on, $(X,\omega)$ is further restricted to be a symplectic toric rational manifold. Note that this means $\omega$ must be $c_1$-positive. If the moment polygon is $\Omega$, after an $AGL(2;\ZZ)$ transformation, we can make it sit in the first quadrant. The preimage of $\Omega$ under the map $$\mu:\CC^2\rightarrow \RR^2,\,\,\,\,\,\,\,(z_1,z_2)\mapsto (\pi|z_1|^2,\pi|z_2|^2)$$ is a convex toric domain in the sense of \cite{Danconcave}. We denote it by $X_{\Omega}$, which can also be thought as the complement of $b_2(X)$ components of the toric boundary divisors in $X$. For such a domain there is a sequence of capacities $c_k^{\rm{ECH}}(X_{\Omega})$ associated to it coming from embedded contact homology (\cite{Hutquantitative},\cite{Hutlecture}).  We are going to prove the following result which relates the algebraic capacities of $(X,\omega)$ with the ECH capacities of $X_{\Omega}$:

\begin{prop}\label{prop:ech}
If $(X,\omega)$ has a toric action with moment polygon $\Omega$ then $f_k(X,\omega)=c_k^{\rm{ECH}}(X_{\Omega})$.
\end{prop}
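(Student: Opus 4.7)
The plan is to match both $f_k(X,\omega)$ and $c_k^{ECH}(X_\Omega)$ with the same combinatorial lattice--polygon optimization problem, using only the almost complex--geometric characterization of $f_k$ from Proposition \ref{prop:equivalentdef} together with the blowup compatibility of Corollary \ref{cor:blowup}, so that no divisor class groups or toric Riemann--Roch theorem need be invoked head-on.

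On the algebraic side, fix the standard toric $J$ on $(X,\omega)$. By Proposition \ref{prop:equivalentdef}, $f_k(X,\omega)$ equals the infimum of $\omega(A)$ over $J$-nef classes $A$ with $ind(A)\geq 2k$. Every $J$-nef class is uniquely represented by a toric divisor $\sum_i a_i D_i$, where the $D_i$ are the facet divisors of $\Omega$, and the displacement data $\{a_i\}$ cut out a convex lattice polygon $P_A\subset\RR^2$ whose outward normal fan coarsens that of $\Omega$. Adjunction gives $ind(A) = 2(\#(P_A\cap\ZZ^2)-1)$ and the symplectic area is the mixed-area pairing
\[
\omega(A) \;=\; \ell_\Omega(P_A) \;:=\; \sum_i \ell_i(\Omega)\,h_{P_A}(v_i),
\]
where $v_i$ and $\ell_i(\Omega)$ denote the primitive outward normal and lattice length of the $i$-th facet of $\Omega$, and $h_{P_A}$ is the support function of $P_A$. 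Consequently
\[
f_k(X,\omega) \;=\; \inf\bigl\{\ell_\Omega(P) : P \text{ convex lattice polygon with normal fan coarsening that of }\Omega,\ \#(P\cap\ZZ^2)\geq k+1\bigr\}.
\]

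On the ECH side, Hutchings' combinatorial formula for convex toric domains (\cite{Hutquantitative},\cite{Hutlecture}) expresses $c_k^{ECH}(X_\Omega)$ as the minimum total action of an admissible orbit set of ECH index $2k$ on $\partial X_\Omega$; after repackaging such generators as convex integer paths bounding lattice polygons in $\RR^2_{\geq 0}$, this becomes
\[
c_k^{ECH}(X_\Omega) \;=\; \inf\bigl\{\ell_\Omega(P) : P \text{ arbitrary convex lattice polygon in }\RR^2_{\geq 0},\ \#(P\cap\ZZ^2)\geq k+1\bigr\}
\]
with the same pairing $\ell_\Omega$. Since the algebraic infimum is taken over a strictly smaller class of polygons (with the additional fan-compatibility constraint), $f_k(X,\omega)\geq c_k^{ECH}(X_\Omega)$ is immediate. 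For the reverse inequality, invoke Theorem \ref{thm:main}: the tropical property (applied to progressively refined toric surfaces) says only finitely many polygons $P_1,\dots,P_m$ can realize the ECH infimum. Blow up $(X,\omega)$ torically with parameter $\varepsilon$ to a smooth toric surface $(X',\omega'_\varepsilon)$ whose fan simultaneously refines the normal fans of all $P_j$; then each $P_j$ now corresponds to a genuine $J'$-nef class whose $\omega'_\varepsilon$-area converges to $\ell_\Omega(P_j)$ as $\varepsilon\to 0$. Combined with Corollary \ref{cor:blowup}, this gives $f_k(X,\omega)=\lim_{\varepsilon\to 0} f_k(X',\omega'_\varepsilon)\leq \min_j\ell_\Omega(P_j)=c_k^{ECH}(X_\Omega)$.

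The main obstacle is the lattice-polygon reformulation of $c_k^{ECH}(X_\Omega)$ used above: one must verify that Hutchings' action/index count over admissible Reeb orbit sets on $\partial X_\Omega$ literally repackages as the infimum of $\ell_\Omega(P)$ over convex lattice polygons with $\#(P\cap\ZZ^2)\geq k+1$. This is the combinatorial heart of the argument and is essentially the content of the convex toric domain formula; once granted, the remainder of the proof is a transparent application of the almost complex--geometric definition of $f_k$ together with the tropical/blowup machinery already developed.
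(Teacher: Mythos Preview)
Your approach is genuinely different from the paper's, and while the overall strategy is plausible, it both misses the paper's much shorter route and reintroduces exactly the toric algebraic geometry you claim to avoid.

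\textbf{What the paper does.} The paper never passes through lattice polygons. It cites the weight--sequence formula (Lemma~\ref{lem:formula}) expressing $c_k^{ECH}(X_\Omega)$ as
\[
\min\Bigl\{xa-\textstyle\sum_i y_i b_i-\sum_j z_j c_j\;:\;x(x+3)-\sum_i y_i(y_i+1)-\sum_j z_j(z_j+1)\ge 2k\Bigr\},
\]
where $(a,b_i,c_j)=w(\Omega)$. The constraint is literally $ind(A)\ge 2k$ for $A=xH-\sum y_iE_i-\sum z_jE_j$, and the objective is $\omega(A)$ once $w(\Omega)$ is identified (after a $D_{K_0}$--move) with a point of $\mathcal P^{c_1>0}$. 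So in the first case of Section~\ref{section:weightsequence} the ECH minimization and the $U_5$--minimization for $f_k$ coincide on the nose; in the second case one uses Corollary~\ref{cor:blowup} and continuity of ECH capacities. No nef cone, no polytopes, no Riemann--Roch.

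\textbf{What your argument actually uses.} Your identity $ind(A)=2(\#(P_A\cap\ZZ^2)-1)$ \emph{is} toric Riemann--Roch, and your $\omega(A)=\sum_i \ell_i(\Omega)\,h_{P_A}(v_i)$ together with the bijection between $J$-nef classes and fan-compatible polytopes is exactly the toric divisor/polytope dictionary. So the opening promise (``no divisor class groups or toric Riemann--Roch'') is not kept; you have essentially rederived Wormleighton's original algebro-geometric argument rather than given an alternative to it.

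\textbf{Gaps in the argument itself.} First, the ``main obstacle'' you flag is a real one: Hutchings' combinatorial formula for convex toric domains is stated for convex integral \emph{paths} from one axis to the other, not for arbitrary convex lattice polygons in $\RR^2_{\ge 0}$, and the passage between the two (and the matching of your $\ell_\Omega$ with the ECH action) needs an argument you do not supply. Second, your appeal to Theorem~\ref{thm:main} for the reverse inequality is misplaced: Theorem~\ref{thm:main} is a finiteness statement for the minimizers of $f_k$, not of $c_k^{ECH}$. For a fixed $\omega$ you only need \emph{one} ECH-minimizing polygon $P$; then a single toric refinement making $P$ nef, plus Corollary~\ref{cor:blowup} (iterated), already gives $f_k(X,\omega)\le c_k^{ECH}(X_\Omega)$. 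The tropical property is not needed at that step.

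\textbf{Summary.} The paper's proof is a two-line identification of optimization problems via the weight-sequence ECH formula; your route through lattice polygons is longer, leans on the toric dictionary you meant to bypass, leaves the ECH-side reformulation unproved, and invokes Theorem~\ref{thm:main} where only Corollary~\ref{cor:blowup} is required.
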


In particular, it implies an interesting phenomenon: if $(X,\omega)$ admits different toric actions with different moment polygons $\Omega_1,\Omega_2$, the ECH capacities of $X_{\Omega_1}$ and $X_{\Omega_2}$ are all the same. This result is not new and has been proved in \cite{BenEhrhart} in a more general sense: $\Omega$ is only assumed to be rational convex which might not be Delzant, and the ECH capacities are identified with the algebraic capacities of a possibly non-smooth projective surface. However, our proof doesn't involve algebraic toric geometry and can be viewed as an application of the previous tropical property.

\subsection{Weight sequence and symplectic cone}\label{section:weightsequence}

Given a Delzant polygon $\Omega$ sitting in the first quadrant, one can associate a sequence of positive numbers $(a,b_1,\cdots,b_k,c_1,\cdots,c_l)$, which is called the weight sequence and denoted by $w(\Omega)$. See \cite{Danconcave} for its definition. We need the following two properties of weight sequence.
\begin{itemize}
    \item When walking clockwise around $\partial\Omega$, one will get the primitive vectors for all the edges. If there is an edge whose vector is $(1,-1)$, then the triangle enclosed by the line extending that edge and $x$-axis, $y$-axis must be the one used to define the first component $a$ in $w(\Omega)$. In this circumstance, the other weights $b_*,c_*$ can be interpreted as the sizes of the corner chopping procedure from the triangle of size $a$ to the Delzant polygon $\Omega$. Therefore, its corresponding closed symplectic manifold must be $\CC\PP^2\#(k+l)\overline{\CC\PP}^2$ with some $c_1$-positive symplectic form $\omega$. After composing some element in $D_{K}$, the weight sequence $w(\Omega)$ will be turned into a unique element in $\mathcal{P}^{c_1>0}$. See figure \ref{fig:weight1} below for an example.
     \begin{figure}[H]
		\includegraphics*[width=\linewidth]{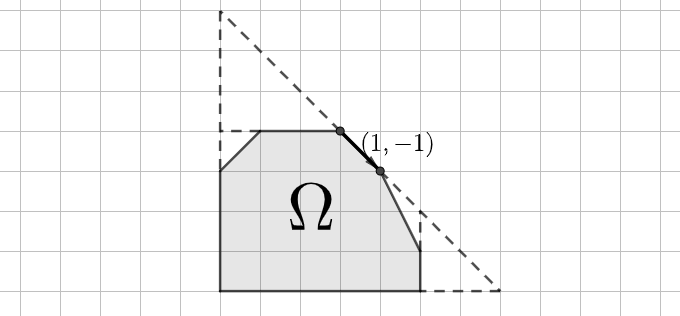}
 
		\caption{The weight sequence of this $\Omega$ is given by $(7,3,1,2,1)$. The closed symplectic toric manifold is $\CC\PP^2\#4\overline{\CC\PP}^2$ with $[\omega]=(7,3,2,1,1)\in\mathcal{P}^{c_1>0}$.\label{fig:weight1}}
	\end{figure}
    \item If there is no such a vector $(1,-1)$, then there must be a unique vertex $p$ of $\Omega$ lying on the segment from $(0,a)$ to $(a,0)$. Now one can perform corner choppings near $p$ of small sizes $\varepsilon_1(t),\cdots,\varepsilon_r(t)$ to get a new Delzant polygon $\Omega'_t$ with one edge's primitive vector being $(1,-1)$. If we let all $\varepsilon_i(t)$ go to $0$ as $t$ goes to $0$, the weight sequence $w(\Omega_t')$ will converge to $w(\Omega)$ after making all of them in a descending order. Therefore, composing some automorphism in $D_{K}$ as above we will obtain an element in $\overline{\mathcal{P}^{c_1>0}}$, the closure of $c_1$-positive cone of the manifold $\CC\PP^2\#(k+l)\overline{\CC\PP}^2$. See figure \ref{fig:weight2}.
    \begin{figure}[H]
		\includegraphics*[width=\linewidth]{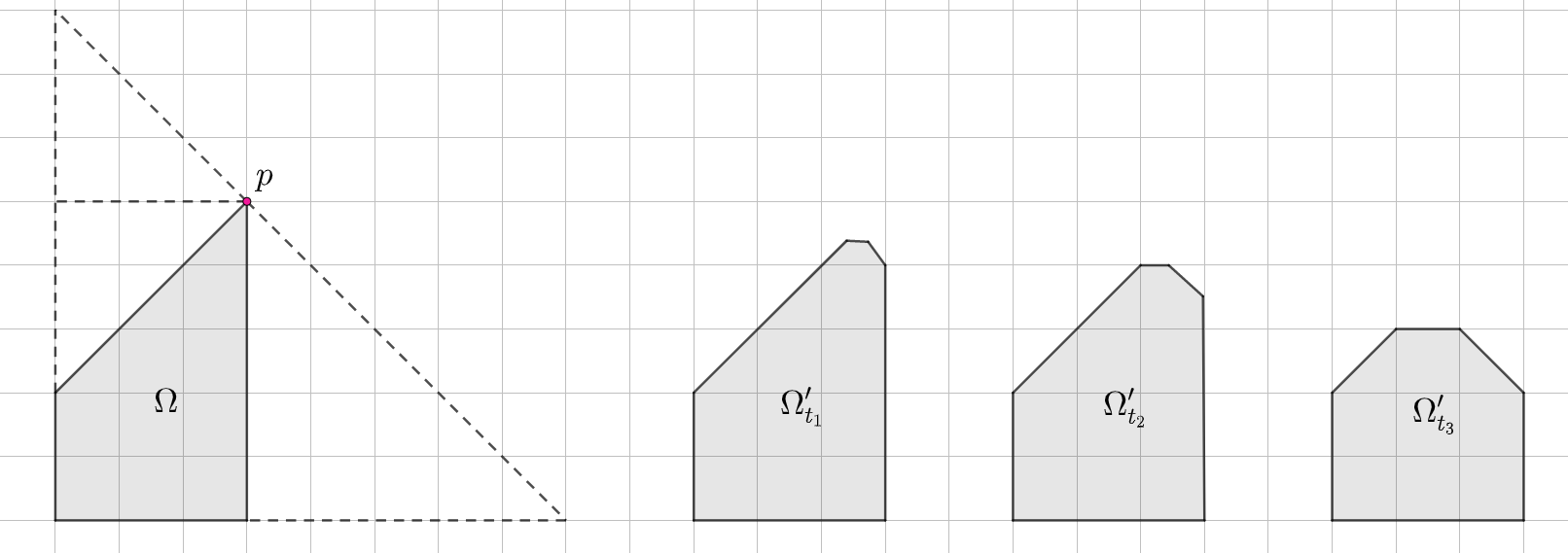}
 
		\caption{The weight sequence of this $\Omega$ is given by $(8,5,3,3)$. One can do toric blowups twice to get those $\Omega'_t$, where the sizes are $\varepsilon_1(t_1)=0.75,\varepsilon_2(t_1)=0.25$; $\varepsilon_1(t_2)=1,\varepsilon_2(t_2)=0.5$; $\varepsilon_1(t_3)=2,\varepsilon_2(t_3)=1$. We see that $w(\Omega'_{t_3})=(5,2,2,1)$, $w(\Omega'_{t_2})=(6.5,3.5,2.5,2)$ and $w(\Omega'_{t_1})=(7,4,2.75,2.25)$ are converging to $w(\Omega)$. By composing an automorphism in $D_K$ on $w(\Omega)$ make it reduced, we will get $(5,2,0,0)$ in the closure of $c_1$-positive cone $\overline{\mathcal{P}^{c_1>0}}$ of the manifold $\CC\PP^2\#3\overline{\CC\PP}^2$.  \label{fig:weight2}}
	\end{figure}
\end{itemize}

\subsection{Proof of $f_k=c_k^{\rm{ECH}}$}
We rely on the formula of \cite{Danconcave}, for ECH capacities in terms of weight sequences:

\begin{lemma}[\cite{Danconcave},\cite{CCFHR},\cite{BenEhrhart}]\label{lem:formula}
    If $\Omega$ has weight sequence $w(\Omega)=(a,b_1,\cdots,b_k,c_1,\cdots,c_l)$, then 
    $$c_k^{\rm{ECH}}(X_{\Omega})=\min_{k_i,m_j\in\ZZ_{\geq 0}}\{c^{\rm{ECH}}_{k+\sum_ik_i+\sum_jm_j}(B(c))-\sum_ic^{\rm{ECH}}_{k_i}(B(a_i))-\sum_jc^{\rm{ECH}}_{m_j}(B(b_j))\}$$
    $$=\min_{x(x+3)-\sum_iy_i(y_i+1)-\sum_jz_j(z_j+1)\geq 2k}\{xc-\sum_i y_ia_i-\sum_jz_jb_j\}$$
\end{lemma}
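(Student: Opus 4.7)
The plan is to derive the first equality from a weight-sequence ball packing of $B(a)$ combined with the disjoint union formula for ECH capacities, and then translate algebraically to the second formulation using Hutchings' explicit ECH capacities of balls $c_k^{ECH}(B(r)) = r\cdot d_k$, where $d_k$ is the unique non-negative integer satisfying $d_k(d_k+1) \leq 2k \leq d_k(d_k+3)$.

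The key geometric input for the first equality is that the weight sequence $(a,b_1,\ldots,b_k,c_1,\ldots,c_l)$ is defined precisely so that the standard right-triangle of size $a$ (whose preimage under $\mu$ is $B(a)$) can be subdivided, by iterated corner chopping, into $\Omega$ together with disjoint right-triangles of sizes $b_i$ and $c_j$. Translating this polygonal decomposition back through the moment map yields, up to a measure-zero set, a symplectic ball packing of $B(a)$ by $X_\Omega$ together with the disjoint balls $B(b_i)$, $B(c_j)$. As established in \cite{Danconcave} and used in \cite{CCFHR}, \cite{BenEhrhart}, for such a full packing one has the identity
\[c_N^{ECH}\!\left(X_\Omega \sqcup \bigsqcup_i B(b_i) \sqcup \bigsqcup_j B(c_j)\right) = c_N^{ECH}(B(a))\]
for every $N$. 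Combining this with the disjoint union formula $c_N^{ECH}(X\sqcup Y) = \max_{k_1+k_2=N}(c_{k_1}^{ECH}(X)+c_{k_2}^{ECH}(Y))$ and solving for $c_k^{ECH}(X_\Omega)$ produces the first equality.

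For the second equality, I substitute $c_{k+K+M}^{ECH}(B(a))=xa$, $c_{k_i}^{ECH}(B(b_i))=y_ib_i$, $c_{m_j}^{ECH}(B(c_j))=z_jc_j$, where $K=\sum_i k_i$ and $M=\sum_j m_j$. For a fixed target tuple $(x,y_i,z_j)$ the value $y_ib_i$ is attained by any $k_i$ with $y_i(y_i+1)/2 \leq k_i \leq y_i(y_i+3)/2$, and similarly for the $m_j$; since the objective $xa-\sum_i y_ib_i-\sum_j z_jc_j$ is decreasing in the allowed range of $x$, and $x$ is in turn determined by $x(x+3) \geq 2(k+K+M)$, one wants $K+M$ as small as possible, so one takes the left endpoints $2k_i = y_i(y_i+1)$ and $2m_j = z_j(z_j+1)$. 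The smallest admissible $x$ then satisfies $x(x+3) \geq 2k + \sum_i y_i(y_i+1) + \sum_j z_j(z_j+1)$, which is exactly the stated constraint. Conversely, any triple $(x,y_i,z_j)$ meeting the inequality produces a feasible $(k_i,m_j)$ with the same objective value, so the two minima coincide.

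The main obstacle is the saturation claim underlying the first equality: one must show that the weight-sequence packing actually realizes the ECH capacities of $B(a)$, rather than merely bounding them from above by monotonicity. This relies on the inductive behavior of ECH capacities under corner chopping developed in \cite{Danconcave}, together with the disjoint union formula from \cite{Hutquantitative},\cite{Hutlecture}; once granted, the remainder of the argument is standard bookkeeping with the ball capacity step function and the algebraic rearrangement in the previous paragraph.
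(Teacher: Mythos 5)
Your treatment of the second equality is correct and is essentially what the paper does: substitute $c^{ECH}_m(B(r))=d_m r$ with $d_m(d_m+1)\leq 2m\leq d_m(d_m+3)$, take the left endpoints $2k_i=y_i(y_i+1)$, $2m_j=z_j(z_j+1)$, and observe that $2N\leq x(x+3)$ forces $d_N\leq x$; both directions of the inequality then follow. (The paper itself only cites \cite{BenEhrhart} for the first equality and records the ball formula for the second.)

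The first equality is where your argument breaks. The identity you invoke,
$$c_N^{ECH}\Bigl(X_\Omega \sqcup \bigsqcup_i B(b_i)\sqcup\bigsqcup_j B(c_j)\Bigr)=c_N^{ECH}(B(a)),$$
is \emph{false} for convex toric domains, even though the packing is volume-filling. Take $\Omega=[0,1]^2$, so $X_\Omega=P(1,1)$ with weight sequence $(2;1;1)$: then $c_1^{ECH}(B(2))=2$, while the disjoint union formula gives $c_1^{ECH}(P(1,1)\sqcup B(1)\sqcup B(1))=\max\{c_1^{ECH}(P(1,1)),c_1^{ECH}(B(1))\}=1$. The packing only yields the inequality $c_N^{ECH}(\sqcup)\leq c_N^{ECH}(B(a))$ by monotonicity, which (via the disjoint union formula) proves $c_k^{ECH}(X_\Omega)\leq\min\{\cdots\}$ but not the reverse. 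The statement you are remembering is the \emph{concave} one from \cite{CCFHR}, where a concave domain has the same ECH capacities as the disjoint union of the balls in its weight expansion and the formula is a $\max$; for convex domains the lower bound $c_k^{ECH}(X_\Omega)\geq\min\{\cdots\}$ requires the independent computation of $c_k^{ECH}$ of convex toric domains (via convex lattice paths as in \cite{Danconcave}, or via the algebraic-geometric argument of \cite{BenEhrhart} section 5.2), and cannot be extracted from the packing alone. A secondary issue: even if your identity held, converting the resulting $\max$ over decompositions of $N$ into the stated $\min$ over $(k_i,m_j)$ for each fixed $k$ would still need an argument that some maximizing decomposition has first index exactly $k$. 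As written, the proof of the first equality has a genuine gap.
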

Note that the first equality already appeared in section 5.2 of \cite{BenEhrhart}. The second equality comes from the fact that 
$$c_k^{\rm{ECH}}(B(a))=da,\,\,\,\text{where }d \text{ satisfies } d(d+1)\leq 2k\leq d(d+3).$$

\begin{proof}[Proof of Proposition \ref{prop:ech}]
    Note that the requirement $$x(x+3)-\sum_iy_i(y_i+1)-\sum_jz_j(z_j+1)\geq 2k$$ of the formula in lemma \ref{lem:formula} is actually the same thing as $ind(A)\geq 2k$ for a homology class $A=xH-\sum_i x_iE_i-\sum_j y_jE_j$. If we are in the first case of section \ref{section:weightsequence}, then the weight sequence corresponds to an element of $\mathcal{P}^{c_1>0}$. So the optimization problems for $f_k$ and $c_k^{\rm{ECH}}$ are completely the same. If we are in the second case, we then apply corollary \ref{cor:blowup} to write $$f_k(X,\omega)=\lim_{t\rightarrow 0}f_k(X\#r\overline{\CC\PP}^2,\omega_t),$$ where $\omega_t$ is given by small blowups of sizes $\varepsilon_1(t),\cdots,\varepsilon_r(t)$ as in section \ref{section:weightsequence}. Note that by the first case, $$f_k(X\#r\overline{\CC\PP}^2,\omega_t)=c_k^{\rm{ECH}}(X_{\Omega'_t}).$$
    Since ECH capacities are continuous with respect to the deformation of the contact boundary, 
    $$\lim_{t\rightarrow 0}c_k^{\rm{ECH}}(X_{\Omega_t'})=c_k^{\rm{ECH}}(X_{\Omega}).$$
    Finally we can combine them to get $$f_k(X,\omega)=c_k^{\rm{ECH}}(X_{\Omega}).$$
\end{proof}

\hfill \break
{\bf Acknowledgement:} The authors would like to thank Julian Chaidez, Dan Cristofaro-Gardiner, Michael Hutchings, Jun Li, Zitian Liu and Jie Min for showing interest in this work and helpful communications.

\bibliographystyle{amsalpha}
	\bibliography{mybib}{}

\newcommand{\etalchar}[1]{$^{#1}$}
\providecommand{\bysame}{\leavevmode\hbox to3em{\hrulefill}\thinspace}
\providecommand{\MR}{\relax\ifhmode\unskip\space\fi MR }
\providecommand{\MRhref}[2]{%
  \href{http://www.ams.org/mathscinet-getitem?mr=#1}{#2}
}
\providecommand{\href}[2]{#2}
\begin{thebibliography}{CCGF{\etalchar{+}}14}

\bibitem[ALLP19]{ALLP-stability}
Silvia Anjos, Jun Li, Tian-Jun Li, and Martin Pinsonnault, \emph{Stability of the symplectomorphism group of rational surfaces}, arXiv preprint arXiv:1911.00961 (2019).

\bibitem[Bir99]{biran}
Paul Biran, \emph{Constructing new ample divisors out of old ones}, Duke Math. J. \textbf{98} (1999), no.~1, 113--135. \MR{1687571}

\bibitem[CCGF{\etalchar{+}}14]{CCFHR}
Keon Choi, Daniel Cristofaro-Gardiner, David Frenkel, Michael Hutchings, and Vinicius Gripp~Barros Ramos, \emph{Symplectic embeddings into four-dimensional concave toric domains}, J. Topol. \textbf{7} (2014), no.~4, 1054--1076.

\bibitem[CG19]{Danconcave}
Dan Cristofaro-Gardiner, \emph{Symplectic embeddings from concave toric domains into convex ones}, J. Differential Geom. \textbf{112} (2019), no.~2, 199--232, With an appendix by Cristofaro-Gardiner and Keon Choi.

\bibitem[CGHR15]{ECHasymp}
Daniel Cristofaro-Gardiner, Michael Hutchings, and Vinicius Gripp~Barros Ramos, \emph{The asymptotics of {ECH} capacities}, Invent. Math. \textbf{199} (2015), no.~1, 187--214. \MR{3294959}

\bibitem[CP12]{nonkahlerruled}
Paolo Cascini and Dmitri Panov, \emph{Symplectic generic complex structures on four-manifolds with {$b_+=1$}}, J. Symplectic Geom. \textbf{10} (2012), no.~4, 493--502. \MR{2982020}

\bibitem[CV22]{casalvianna}
Roger Casals and Renato Vianna, \emph{Full ellipsoid embeddings and toric mutations}, Selecta Math. (N.S.) \textbf{28} (2022), no.~3, Paper No. 61, 62. \MR{4414137}

\bibitem[CW20]{rationalobs}
Julian Chaidez and Ben Wormleighton, \emph{{ECH} embedding obstructions for rational surfaces}, Preprint arXiv:2008.10125 (2020).

\bibitem[DLZ10]{DTZ10}
Tedi Draghici, Tian-Jun Li, and Weiyi Zhang, \emph{Symplectic forms and cohomology decomposition of almost complex four-manifolds}, Int. Math. Res. Not. IMRN (2010), no.~1, 1--17. \MR{2576281}

\bibitem[Dra99]{Draghici}
Tedi Draghici, \emph{The {K}\"{a}hler cone versus the symplectic cone}, Bull. Math. Soc. Sci. Math. Roumanie (N.S.) \textbf{42(90)} (1999), no.~1, 41--49. \MR{1880654}

\bibitem[FM88]{FM}
Robert Friedman and John~W. Morgan, \emph{On the diffeomorphism types of certain algebraic surfaces. {I}}, J. Differential Geom. \textbf{27} (1988), no.~2, 297--369. \MR{925124}

\bibitem[GL80]{GromovLawsonpsc}
Mikhael Gromov and H.~Blaine Lawson, Jr., \emph{The classification of simply connected manifolds of positive scalar curvature}, Ann. of Math. (2) \textbf{111} (1980), no.~3, 423--434. \MR{577131}

\bibitem[Hit75]{Hitchin}
Nigel Hitchin, \emph{On the curvature of rational surfaces}, Differential geometry ({P}roc. {S}ympos. {P}ure {M}ath., {V}ol. {XXVII}, {P}art 2, {S}tanford {U}niv., {S}tanford, {C}alif., 1973), Amer. Math. Soc., Providence, R.I., 1975, pp.~65--80.

\bibitem[Hut11]{Hutquantitative}
Michael Hutchings, \emph{Quantitative embedded contact homology}, J. Differential Geom. \textbf{88} (2011), no.~2, 231--266.

\bibitem[Hut14]{Hutlecture}
\bysame, \emph{Lecture notes on embedded contact homology}, Contact and symplectic topology, Bolyai Soc. Math. Stud., vol.~26, J\'{a}nos Bolyai Math. Soc., Budapest, 2014, pp.~389--484.

\bibitem[Hut22]{HutchingsElementary}
\bysame, \emph{An elementary alternative to ech capacities}, Proceedings of the National Academy of Sciences \textbf{119} (2022), no.~35.

\bibitem[KK17]{KK17}
Yael Karshon and Liat Kessler, \emph{Distinguishing symplectic blowups of the complex projective plane}, J. Symplectic Geom. \textbf{15} (2017), no.~4, 1089--1128. \MR{3734610}

\bibitem[LL95a]{LiLiuWallcrossing}
T.~J. Li and A.~Liu, \emph{General wall crossing formula}, Math. Res. Lett. \textbf{2} (1995), no.~6, 797--810.

\bibitem[LL95b]{LiLiuruled}
\bysame, \emph{Symplectic structure on ruled surfaces and a generalized adjunction formula}, Math. Res. Lett. \textbf{2} (1995), no.~4, 453--471.

\bibitem[LL99]{LLSWGr}
Tian-Jun Li and Ai-Ko Liu, \emph{The equivalence between {${\rm SW}$} and {${\rm Gr}$} in the case where {$b^+=1$}}, Internat. Math. Res. Notices (1999), no.~7, 335--345.

\bibitem[LL01]{LiLiu01}
\bysame, \emph{Uniqueness of symplectic canonical class, surface cone and symplectic cone of 4-manifolds with {$B^+=1$}}, J. Differential Geom. \textbf{58} (2001), no.~2, 331--370.

\bibitem[LLW22]{llwtorelli}
Jun Li, Tian-Jun Li, and Weiwei Wu, \emph{Symplectic torelli groups of rational surfaces}, arXiv preprint arXiv:2212.01873 (2022).

\bibitem[LMN22]{Enumerate}
Tian-Jun Li, Jie Min, and Shengzhen Ning, \emph{Enumerative aspect of symplectic log {C}alabi-{Y}au divisors and almost toric fibrations}, Preprint arXiv:2203.08544 (2022).

\bibitem[LMN23]{ATFLCY}
\bysame, \emph{Almost toric presentations of symplectic log calabi-yau pairs}, arXiv preprint arXiv:2303.09964 (2023).

\bibitem[LMS13]{nonkahlerT4}
Janko Latschev, Dusa McDuff, and Felix Schlenk, \emph{The {G}romov width of 4-dimensional tori}, Geom. Topol. \textbf{17} (2013), no.~5, 2813--2853. \MR{3190299}

\bibitem[LW12]{LiWuLag}
Tian-Jun Li and Weiwei Wu, \emph{Lagrangian spheres, symplectic surfaces and the symplectic mapping class group}, Geom. Topol. \textbf{16} (2012), no.~2, 1121--1169.

\bibitem[LZ09]{comparetamecompatible}
Tian-Jun Li and Weiyi Zhang, \emph{Comparing tamed and compatible symplectic cones and cohomological properties of almost complex manifolds}, Comm. Anal. Geom. \textbf{17} (2009), no.~4, 651--683. \MR{2601348}

\bibitem[LZ15a]{almostkahler}
\bysame, \emph{Almost {K}\"{a}hler forms on rational 4-manifolds}, Amer. J. Math. \textbf{137} (2015), no.~5, 1209--1256.

\bibitem[LZ15b]{nefclass}
\bysame, \emph{{$J$}-holomorphic curves in a nef class}, Int. Math. Res. Not. IMRN (2015), no.~22, 12070--12104.

\bibitem[McD97]{McDufflecture}
Dusa McDuff, \emph{Lectures on {G}romov invariants for symplectic {$4$}-manifolds}, Gauge theory and symplectic geometry ({M}ontreal, {PQ}, 1995), NATO Adv. Sci. Inst. Ser. C: Math. Phys. Sci., vol. 488, Kluwer Acad. Publ., Dordrecht, 1997, With notes by Wladyslav Lorek, pp.~175--210.

\bibitem[MO15]{MO}
Dusa McDuff and Emmanuel Opshtein, \emph{Nongeneric {$J$}-holomorphic curves and singular inflation}, Algebr. Geom. Topol. \textbf{15} (2015), no.~1, 231--286.

\bibitem[MP94]{MPpacking}
Dusa McDuff and Leonid Polterovich, \emph{Symplectic packings and algebraic geometry}, Invent. Math. \textbf{115} (1994), no.~3, 405--434, With an appendix by Yael Karshon. \MR{1262938}

\bibitem[MS12a]{MSJcurvebook}
Dusa McDuff and Dietmar Salamon, \emph{{$J$}-holomorphic curves and symplectic topology}, second ed., American Mathematical Society Colloquium Publications, vol.~52, American Mathematical Society, Providence, RI, 2012. \MR{2954391}

\bibitem[MS12b]{Ellipsoidannals}
Dusa McDuff and Felix Schlenk, \emph{The embedding capacity of 4-dimensional symplectic ellipsoids}, Ann. of Math. (2) \textbf{175} (2012), no.~3, 1191--1282.

\bibitem[MS17]{MSbook}
Dusa McDuff and Dietmar Salamon, \emph{Introduction to symplectic topology}, third ed., Oxford Graduate Texts in Mathematics, Oxford University Press, Oxford, 2017. \MR{3674984}

\bibitem[MS22]{mcduffsiegel}
Dusa McDuff and Kyler Siegel, \emph{Symplectic capacities, unperturbed curves, and convex toric domains}, Preprint arXiv:2111.00515 (2022).

\bibitem[Sal]{Salamonbook}
Dietmar Salamon, \emph{Spin geometry and {S}eiberg-{W}itten invariants}.

\bibitem[Sun97]{Sungruled}
Myong-Hee Sung, \emph{K\"{a}hler surfaces of positive scalar curvature}, Ann. Global Anal. Geom. \textbf{15} (1997), no.~6, 509--518. \MR{1608671}

\bibitem[Sym03]{symington}
Margaret Symington, \emph{Four dimensions from two in symplectic topology}, Topology and geometry of manifolds ({A}thens, {GA}, 2001), Proc. Sympos. Pure Math., vol.~71, Amer. Math. Soc., Providence, RI, 2003, pp.~153--208. \MR{2024634}

\bibitem[Tau96a]{Taubes1}
Clifford~Henry Taubes, \emph{Counting pseudo-holomorphic submanifolds in dimension {$4$}}, J. Differential Geom. \textbf{44} (1996), no.~4, 818--893.

\bibitem[Tau96b]{Taubes2}
\bysame, \emph{{${\rm SW}\Rightarrow{\rm Gr}$}: from the {S}eiberg-{W}itten equations to pseudo-holomorphic curves}, J. Amer. Math. Soc. \textbf{9} (1996), no.~3, 845--918.

\bibitem[Tau99a]{Taubes3}
\bysame, \emph{{${\rm Gr}\Longrightarrow{\rm SW}$}: from pseudo-holomorphic curves to {S}eiberg-{W}itten solutions}, J. Differential Geom. \textbf{51} (1999), no.~2, 203--334.

\bibitem[Tau99b]{Taubes4}
\bysame, \emph{{${\rm GR}={\rm SW}$}: counting curves and connections}, J. Differential Geom. \textbf{52} (1999), no.~3, 453--609.

\bibitem[Tau11]{Taubes5}
\bysame, \emph{Tamed to compatible: symplectic forms via moduli space integration}, J. Symplectic Geom. \textbf{9} (2011), no.~2, 161--250.

\bibitem[Thu76]{KodairaThurston}
W.~P. Thurston, \emph{Some simple examples of symplectic manifolds}, Proc. Amer. Math. Soc. \textbf{55} (1976), no.~2, 467--468. \MR{402764}

\bibitem[Wit94]{Witten}
Edward Witten, \emph{Monopoles and four-manifolds}, Math. Res. Lett. \textbf{1} (1994), no.~6, 769--796.

\bibitem[Wor21]{BenEhrhart}
Ben Wormleighton, \emph{E{CH} capacities, {E}hrhart theory, and toric varieties}, J. Symplectic Geom. \textbf{19} (2021), no.~2, 475--506.

\bibitem[Wor22]{Algebraic}
B.~Wormleighton, \emph{Algebraic capacities}, Selecta Math. (N.S.) \textbf{28} (2022), no.~1, Paper No. 9, 45.

\bibitem[Zha21]{Weiyi21}
Weiyi Zhang, \emph{Moduli space of {$J$}-holomorphic subvarieties}, Selecta Math. (N.S.) \textbf{27} (2021), no.~2, Paper No. 29, 44.

\bibitem[Zuc77]{Zucker}
Steven Zucker, \emph{The {H}odge conjecture for cubic fourfolds}, Compositio Math. \textbf{34} (1977), no.~2, 199--209. \MR{453741}

\end{thebibliography}
\end{document}